\pgfplotsset{compat = 1.3}
\Crefname{ALC@unique}{Line}{Lines} % <- Preamble
\newcounter{algsubstate}
\newcommand{\specificthanks}[1]{\@fnsymbol{#1}}% Inserts a specific \thanks symbol
\newcommand*\circled[1]{\tikz[baseline=(char.base)]{
            \node[shape=circle,draw,inner sep=0.8pt] (char) {#1};}}
\newcommand*\circledletter[1]{\tikz[baseline=(char.base)]{
        \node[shape=circle,draw,minimum size=2mm, inner sep=0pt] (char)
        {\rule[-1.5pt]{0pt}{\dimexpr2ex+1pt}#1};}}
\newcommand*\blcircled[1]{\tikz[baseline=(char.base)]{
            \node[shape=circle,fill=black,draw,inner sep=0.8pt] (char) {#1};}}
\newcommand*\blcircledletter[1]{\tikz[baseline=(char.base)]{
        \node[shape=circle,fill=black,draw,minimum size=2mm, inner sep=0pt] (char)
        {\rule[-1.5pt]{0pt}{\dimexpr2ex+1pt}#1};}}
\newcommand\mytodo[1]{\textcolor{red}{\textbf{TODO: }#1}\\}
\renewcommand\mytodo[1]{} % uncomment to disable todo's
\newcommand{\LinOp}[2]{(#2 A{-}#1 B)}
\newcommand{\LinOpInv}[2]{(#2 A{-}#1 B)^{-1}}
\newcommand{\Rho}{\mathrm{P}}
\newcommand{\CC}{\mathbb{C}}
\numberwithin{theorem}{section}
\newcommand{\TheLongTitle}{A rational QZ method} % --- Part I: Complex matrix pairs} 
\newcommand{\TheTitle}{RQZ} 
\newcommand{\TheAuthors}{D. Camps, K. Meerbergen, and R. Vandebril}
\headers{\TheTitle}{\TheAuthors}
\title{{\TheLongTitle}\thanks{Submitted to the editors February 12, 2018.
\funding{
The research was partially supported by 
the Research Council KU Leuven, projects 
C14/16/056 (Inverse-free Rational Krylov Methods: Theory and Applications), 
OT/14/074 (Numerical algorithms for large scale matrices with uncertain coefficients)
}}}
\author{
  Daan Camps\thanks{Department of Computer Science, KU Leuven, University of Leuven, 3001 Leuven, Belgium. (\email{daan.camps@kuleuven.be}, \email{karl.meerbergen@kuleuven.be}, \email{raf.vandebril@kuleuven.be})} 
  \and
  Karl Meerbergen\footnotemark[2]
  \and
  Raf Vandebril\footnotemark[2]
}
 \def\@testdef #1#2#3{%
   \def\reserved@a{#3}\expandafter \ifx \csname #1@#2\endcsname
  \reserved@a  \else
 \typeout{^^Jlabel #2 changed:^^J%
 \meaning\reserved@a^^J%
 \expandafter\meaning\csname #1@#2\endcsname^^J}%
 \@tempswatrue \fi}
\begin{document}

\maketitle

% REQUIRED
\begin{abstract}
We propose a \emph{rational QZ} method for the solution of the dense, unsymmetric
generalized eigenvalue problem. This generalization of the classical
%The rational QZ method is a generalization of the 
QZ method operates implicitly on a Hessenberg, Hessenberg pencil instead of on a Hessenberg,
triangular pencil. 
Whereas the QZ method performs nested subspace iteration driven by a
polynomial, the rational QZ method allows for nested subspace iteration driven by a
rational function, this creates the additional freedom of selecting poles. 
% on a Hessenberg pair at similar computational cost as the QZ method. 
%A Hessenberg, Hessenberg pair is often named 
In this article we study Hessenberg, Hessenberg pencils, link them to rational Krylov
subspaces, 
%Hessenberg pairs, 
propose a direct reduction method to such a pencil, and 
%Hessenberg form, and 
introduce the implicit rational QZ step.
The link  with rational Krylov subspaces allows us to prove essential
uniqueness (implicit Q theorem) of the rational QZ iterates as well as convergence of the proposed
method. In the proofs, we operate directly on the pencil
 instead of rephrasing it all in terms of a single matrix.
%is studied and provides 
%It motivates the rational QZ method by studying the driving subspace iteration. 
Numerical experiments are included to illustrate competitiveness in terms of speed and
accuracy with the classical approach. Two other types of experiments exemplify 
new possibilities. First we illustrate that good pole selection can be used
to deflate the original problem during the reduction phase, and second we use  
the rational QZ method to
%extended to accommodate implicit steps of higher degree and applied 
 implicitly filter a rational Krylov subspace in an iterative method.

% Numerical experiments indicate that the additional degrees of freedom, 
% We
% hich is the pole
% selection, can be used to separate the spectrum.
%improve convergence and allow for new shifting strategies. 

\end{abstract}

% REQUIRED
\begin{keywords}
  generalized eigenvalues, implicit, rational QZ, rational Krylov
\end{keywords}

% REQUIRED
\begin{AMS}
  65F15, 15A18
\end{AMS}

%%%%%%%%%%%%%%%%%%%%%%%%%%%%%%%%%%%%%%%%%%%%%%%%%%%%%%%%%%%%%%%%%%%%%%%
% SECTION: INTRODUCTION
%%%%%%%%%%%%%%%%%%%%%%%%%%%%%%%%%%%%%%%%%%%%%%%%%%%%%%%%%%%%%%%%%%%%%%%
\section{Introduction.}
\label{sec:introduction}

The numerical computation of the eigenvalues of a regular\footnote{Regular means
  that the characteristic polynomial differs from zero.}%, so there is a finite number of eigenvalues.} 
%is a finite set of solutions, opposed to all  
matrix pair $A,B \in \CC^{n \times n}$ is the principal problem studied in this paper. The
set of eigenvalues of $(A,B)$ is denoted as $\Lambda$ and defined by
\begin{equation}
\label{def:GEP}
\Lambda = \lbrace 
\lambda=\alpha / \beta \in \bar{\CC}: \text{det}(\beta A - \alpha B)=0 \rbrace,
\end{equation}
% where $\langle \alpha, \beta \rangle$ is the equivalence class $\lbrace (\tau \alpha, \tau
% \beta) : \tau \in \CC{\setminus}\lbrace 0 \rbrace \rbrace$ introduced to exclude
% eigenvalues that only differ by a scalar multiple from $\Lambda(A,B)$. 
with $\bar{\CC}=\CC\cup\lbrace \infty\rbrace$.
If $\beta \neq 0$, the eigenvalue is equal to $\lambda = \alpha / \beta$, while for $\beta
= 0$ the eigenvalue is located at $\infty$. When there are no infinite eigenvalues
$B$ is invertible and the
eigenvalues of the pencil are equal to those of $B^{-1}A$ or $AB^{-1}$ see, e.g., the
monographs \cite{b333,b037}.

The QZ method, originally introduced by Moler \& Stewart \cite{Moler1973}, is presumably
the method of choice for the solution of this problem for general small to medium-size
matrix pairs. The original pencil $(A,B)$ is transformed via unitary equivalences to generalized
Schur form $(S,T)$, where both $S$ and $T$ are upper triangular. The eigenvalues of $(A,B)$ are
readily found as the ratios of the diagonal elements of the pencil $(S,T)$. The method consists conceptually out of $2$ phases,
just as the QR algorithm:
\begin{enumerate}
\item A direct reduction of the matrix pair $(A,B)$ to an equivalent Hessenberg, triangular matrix pair $(H,R)$.
\item An iterative phase during which deflating subspaces of the matrix pair $(H,R)$ are
  determined and the matrix pair is essentially reduced to the triangular, triangular pair
  $(S,T)$.% from which the eigenvalues are readily available. T
%his reduction is also termed a generalized Schur decomposition of the matrix pair $(A,B)$.
%\item{A final (optional) phase to recover the eigenvectors from the generalized Schur decomposition.}
\end{enumerate}

Various modifications and
additions to the original algorithm have been proposed after its original introduction.
Kaufman \cite{Kaufman1977} added a deflation strategy and Ward \cite{Ward1975} further
refined various aspects of the method. Watkins \& Elsner \cite{Watkins1994} generalized the QZ algorithm to a class of
GZ iterations, and more recently, K\aa gstr\"om \& Kressner
\cite{Kagstrom2007} incorporated an aggressive early deflation strategy into a multishift
QZ iteration. For more information we refer the reader to the monographs of Watkins
\cite{b333} and
Kressner \cite{b339}.

Vandebril \& Watkins \cite{Vandebril2013} proposed a generalization beyond the Hessenberg,
upper triangular pair. Their QZ like method
reduces the matrix pair $(A,B)$ to \emph{condensed} form and iterates
directly on the condensed matrix pair. A matrix pair $(A,B)$ is said to be a condensed pair if both
matrices are Hessenberg matrices and if there is exactly one nonzero element for every subdiagonal
position which can be either at $A$ or $B$.  The classical Hessenberg, triangular format used in the
QZ method is a special instance of a condensed matrix pair which maintains all zero
subdiagonal elements at the side of $B$.

In this paper we propose a further generalization of the QZ method beyond condensed pairs.
We will call this method the \emph{rational QZ} (RQZ) method as it links strongly to
rational Krylov subspaces \cite{Berljafa2015}.
% The \emph{rational QZ} method 
%and operates
%
%on a Hessenberg, Hessenberg pair. %, instead of on a single Hessenberg matrix. 
%acts on pairs that are
%in \emph{rational Hessenberg} form. A matrix pair is a Hessenberg pair if both
%matrices are upper Hessenberg matrices, the term stems from the connection with rational
%Krylov subspaces . 
As we will demonstrate in detail in the remainder of
the paper, Hessenberg pairs and the associated rational Krylov subspaces are determined by
\emph{poles} that can be exploited to improve the convergence of the method. Both the
original QZ algorithm \cite{Moler1973} and the condensed QZ algorithm \cite{Vandebril2013}
turn out to be special instances of the RQZ method determined by a specific choice of poles.
%Further documentation and 
An implementation of the RQZ method is publicly available on: \url{numa.cs.kuleuven.be/software/rqz}
.

%We'd like to stress that 
This article is closely related to the article by Berljafa \&
G\"{u}ttel \cite{Berljafa2015}.
% who have paved the way to the development of this rational QR
%algorithm. 
Starting from a rational Krylov subspace and the linked Hessenberg
pair, %keeping track of the recursions,
their article discusses a way to change the poles by operating solely on the Hessenberg
pair. We will see in this article that their way of introducing poles and
moving them is  related to introducing a shift and chasing it, like in typical QR
algorithms. We will extend these results and formulate
%prove that this is actually the case and show that the resulting
an implicit QZ
algorithm 
%The algorithm is identical the interpretation different. We will rephrase this
%algorithm as a QZ type algorithm, proof that this algorithm e
that executes nested subspace
iteration driven by a rational function.  Moreover, in the theoretical analysis we 
directly rely on the pair $(A,B)$ instead of rephrasing the relations in terms of a single
matrix $AB^{-1}$ or $B^{-1}A$ as is usually done, assuming thereby nonsingularity of $B$. 
% Also the multishift setting 
% Moreover we will e and we will show how to attain a
% multishift setting

This paper is organized as follows.
The notion of a Hessenberg pair is formally defined in \Cref{sec:Hessenbergpairs}, its
properties are studied subsequently, and two types of operations on Hessenberg
pairs are discussed: the introduction of a new pole and the \emph{swapping} of poles. 
\Cref{sec:dirred} proposes a method to reduce a general matrix pair to a Hessenberg pencil
by means of unitary equivalence transformations. This is the RQZ analogue of the initial
reduction phase in the QZ algorithm.
The generalization of the iterative phase is presented in \Cref{sec:rqz}.
It is illustrated how an RQZ step with a single shift can be performed implicitly and
numerical experiments illustrate the speed and accuracy.
An implicit Q theorem for Hessenberg pairs is stated and used to prove that the RQZ
iteration implicitly performs nested subspace iteration driven by a set of rational
functions in \Cref{sec:implQthm,ssec:subspaceiter}.
%RQZ achieves this implicitly without solving any complete system with $A$, $B$, or a linear combination thereof.
%An implicit RQZ method running tightly packed bulges is 
%presented 
%generalized to accommodate an implicit step of higher degree 
%in \Cref{sec:tightly_packed}.
%In \Cref{sec:choices} we discuss a few strategies to select shifts and poles for an RQZ iteration.
In \Cref{sec:RK} we apply the RQZ method to filter a \emph{rational Krylov} subspace in an
iterative method. We conclude in \Cref{sec:conclusion}.
% Numerical experiments are presented in \Cref{sec:numexp} to illustrate the accuracy and
% speed of the approach, and to reveal new possibilities resulting from this algorithm. \cref{sec:conclusion} concludes the paper.
% %
% XXXXXXXXXXXXXXXXXXXXXXXXXXXx Als de paper klaar is herbekijken XXXXXXXXXXXXXXXXXXXXXXXXXXXXXxx

%\subsection*{Notation.}
In this article we adopt the following notational conventions.
Scalars $\alpha, \beta, \hdots$ are denoted with Greek letters, 
matrices $A, B, \hdots$ with capital Latin letters.
Vectors $\bm{a}, \bm{b}, \hdots$ are denoted in lowercase boldface Latin letters.
The entry on row $i$ and column $j$ of $A$ is denoted as $a_{ij}$,
and column $i$ of $A$ as $\bm{a}_i$.
\textsc{Matlab}'s colon notation is sometimes used to denote part of a matrix: 
$A_{i:j,:}$ stands for rows $i$ to $j$ of $A$.
$I$ is the identity matrix and $\bm{e}_i$ is its $i$th column. 
$A^*$ is the Hermitian conjugate of $A$, $\mathcal{R}(A)$ is the column space of $A$.
$\mathcal{E}_k = \mathcal{R}(\bm{e}_1, \hdots, \bm{e}_k)$ is the subspace spanned by 
the $k$ first canonical basis vectors.
$\mathcal{K}_{k}(A,\bm{v}) = \mathcal{R}(\bm{v}, A\bm{v}, \hdots, A^{k-1}\bm{v})$ is the
Krylov subspace of order $k$ generated by $A$ from $\bm{v}$.
The complex plane extended with the point at infinity, $\CC \cup \lbrace \infty \rbrace$, is denoted as
$\bar{\CC}$.
For all nonzero scalars $\alpha \in {\CC}$, we define $\alpha /0 = \infty$ and $\alpha / \infty = 0$.

%%%%%%%%%%%%%%%%%%%%%%%%%%%%%%%%%%%%%%%%%%%%%%%%%%%%%%%%%%%%%%%%%%
% SECTION : PROPER HESSENBERG PAIRS
%%%%%%%%%%%%%%%%%%%%%%%%%%%%%%%%%%%%%%%%%%%%%%%%%%%%%%%%%%%%%%%%%%
\section{Hessenberg pairs and their poles.}
\label{sec:Hessenbergpairs}
In this section we repeat necessities from the literature and 
introduce some basic concepts linked to Hessenberg pairs. These pairs
 appear naturally in the context of the \emph{rational Krylov} (RK) method
introduced and studied  by Ruhe \cite{Ruhe1984,Ruhe1994a, Ruhe1994b,
  Ruhe1998a}.  We will elaborate on this connection in \Cref{sec:RK}.

\subsection{Proper Hessenberg pairs.}
\label{subsec:RH}
A matrix $H$ is of Hessenberg form if all its elements below the first subdiagonal are
zero.  A \emph{proper} or \emph{irreducible Hessenberg} matrix has all its subdiagonal
elements different from zero. Being proper ensures that there are no
obvious \emph{deflations} allowing us to split the Hessenberg matrix into 
block upper triangular form with
smaller
submatrices. %, whose eigenvalues we'd like to compute.
 For a pair of Hessenberg matrices there is a subtlety, as there are two less obvious
possibilities for deflation. 
% A pair of Hessenberg matrices is said to be proper if it complies with the
% following definition; 
% for a proper Hessenberg pencil  we can define its poles.%
\begin{definition}[Proper Hessenberg pair]
\label{def:ratHess}
A pair of Hessenberg matrices $A,B \in \CC^{n \times n}$ is said to be proper (or
irreducible) if the following two conditions are met:
\begin{enumerate}[I.]
\item There is no $i$ in $1, \ldots, n-1$ so that $a_{i+1,i}$ and $b_{i+1,i}$ are
  simultaneously zero;
\item The first columns of $A$ and $B$ are linearly independent, as are the last rows of $A$ and $B$.
\end{enumerate}
  For a proper Hessenberg pair we define its ordered pole tuple as $\Xi = ( \xi_1, \hdots,
  \xi_{n-1} )$, $\xi_i \in \bar{\CC}$, where $a_{i+1,i}/b_{i+1,i} = \xi_i$ for all $i$ from
  $1$ to $n{-}1$.
\end{definition}

%\begin{definition}[Poles of a pair of proper Hessenberg matrices]
%\end{definition}

% \begin{remark}
% We remark that a pair of proper Hessenberg matrices is just an couple of two proper
% Hessenberg matrices, whereas a proper Hessenberg pair satisfies the additional condition
% \end{remark}

%
% \begin{remark}
% This definition is motivated by the fact that, suppose $B$ invertible, the matrix
% $AB^{-1}$ and/or $B^{-1}A$ would otherwise not be proper.
% % This is easy to see, if one considers the QR decomposition of both $A$ and $B$, then the
% % first givens rotation in the QR decomposition of both A, and B is identical. Multiplying
% % the inverse of B with A, these givens rotations cancel out.
% % The questio remains, if we can get a zero on another spot on the diagonal.
% \end{remark}

The ratios of the subdiagonal elements of $A$ over the subdiagonal elements of $B$ are
thus called the poles of the proper Hessenberg pair. Since we set division by zero equal
to $\infty$ in
$\bar{\CC}$, a pole is located at $\infty$ if the respective subdiagonal element of
$B$ is zero.

The first condition of being proper means that all poles are well-defined over
$\bar{\CC}$, so there is no $0/0$.  Just like in the classical case
 $a_{i+1,i} = b_{i+1,i} = 0$ allows us to deflate the problem into two
independent subproblems.

The second condition is less obvious,
but it is simple to deflate an eigenvalue if it is not met.
Construct a rotation $Q_1$, acting on the first two rows
such that $Q_1^*$ maps the first column of $A$ and $B$ in the direction of $\bm{e}_1$, then the pair
$Q_1^*(A,B)$ allows for a deflation. Similarly we can construct a rotation $Z_{n-1}$ to
transform $(A,B)Z_{n-1}$ to a deflatable format in case the last rows are linearly dependent.
Thus if condition~II does not hold then the pair can be transformed into an equivalent
pair for which condition~I does not hold in the first or last subdiagonal position.

We remark that even if condition~II of the definition of a proper Hessenberg pair were not
met, we still define the first pole $\xi_1$ and last pole $\xi_{n-1}$
as in \Cref{def:ratHess}. Suppose, however, that 
% For the base case $i=1$, $\mathcal{R}(\bm{a}_1) \neq \mathcal{R}(\bm{b}_1)$ as otherwise
% 
there is some scalar $\gamma$ such that $\bm{a}_1 = \gamma \bm{b}_1$, with $\bm{a}_1$ and
$\bm{b}_1$ the first columns of $A$ and $B$ respectively.
This means that $\gamma$ is both the first pole, $\xi_1 = a_{21}/b_{21} = \gamma$, and an
eigenvalue, $A\bm{e}_1 = \gamma B \bm{e}_1$. So condition~II of the definition implies that
the pole $\xi_1$ equals an eigenvalue.
%, which is not allowed.
Similarly the last pole $\xi_{n-1}$ is an eigenvalue if the last rows of $A$
and $B$ are linearly dependent.
%Assume the statement holds up to column $i{-}1$.

Properness of the Hessenberg matrix ensures \emph{essential uniqueness} of the QR iterates,
which is crucial in the design of an \emph{implicit} $QR$ algorithm \cite{Fra61, Fra62}
for the standard eigenvalue problem.  
%This concept can be generalized to rational
%Hessenberg pairs: $(A,B)$ is said to be an irreducible Hessenberg pair if all
%poles are in $\bar{\CC}$.  More precisely, a Hessenberg pair is reducible
%if one of its poles $\xi_i \notin \bar{\CC}$, 
We will prove in \Cref{sec:implQthm} that also proper Hessenberg pairs inherit
a type of essential uniqueness allowing for the design of an implicit method, which is the
implicit Q theorem for Hessenberg pairs.

The other pencils for which QZ algorithms were designed, fit in \Cref{def:ratHess}.
Pairs in Hessenberg, triangular form \cite{Moler1973}
are proper with poles $\Xi = ( \infty,
\infty, \hdots, \infty )$;  a pair of matrices in condensed form
\cite{Vandebril2013} is also a proper Hessenberg pair with poles being either
$0$ or $\infty$.

The properties of proper Hessenberg pairs discussed in the next lemma are frequently
used throughout the paper.
\begin{lemma}
\label{lemma:properHessproperties}
Let $(A,B) \in \mathbb{C}^{n \times n}$ be a proper Hessenberg pair with poles $\Xi =
( \xi_1, \hdots, \xi_{n-1} )$.
%, $\Xi \cap \Lambda(A,B) = \varnothing$. 
Then the following statements hold:
\begin{enumerate}[I.]
\item For  $\mu,\nu\in\CC$, such that $ \mu / \nu \notin \Xi$, we
    have that  $\LinOp{\mu}{\nu}$ is a proper Hessenberg matrix.
\item For  $\mu,\nu\in\CC$, such that $\mu/\nu$ is equal to a certain pole $\xi_k$ ($1\leq k \leq n-1$),
%then 
we have that
$N = \LinOp{\mu}{\nu}$, 
%with $\alpha_i / \beta_i =
%    \xi_i$, 
is 
%a 
%nonsingular 
%block matrix,  %with two Hessenberg (possibly non proper) blocks:
block upper triangular,
\begin{align*}
N =
\begin{bmatrix}
N_{11} & N_{12} \\
 & N_{22}
\end{bmatrix},
% \quad \text{with inverse} \quad
% N^{-1} =
% \begin{bmatrix}
% N_{11}^{-1} & \ast \\
%  & N_{22}^{-1}
% \end{bmatrix},
\end{align*}
where $N_{11}$ and $N_{22}$ are Hessenberg matrices of sizes $k{\times}k$ and ${(n-k)}{\times}{(n-k)}$
respectively. 
\item For $\mu, \nu, \alpha, \beta \in \CC$, such that $\mu \beta \neq \alpha \nu$, we have that,
\begin{equation*}
(M,N) = (\beta A{-}\alpha B, \nu A{-}\mu B),
\end{equation*}
is a proper Hessenberg pair.
\item For $k=1,\hdots, n-1$ we have that $\mathcal{R}(\bm{a}_1, \hdots \bm{a}_k) \neq
  \mathcal{R}(\bm{b}_1, \hdots, \bm{b}_k)$.
\end{enumerate}
\end{lemma}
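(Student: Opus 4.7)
The plan is to dispatch items I--III by direct inspection of subdiagonal entries and to prove IV by induction on $k$.

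For I, I would examine the $(i{+}1,i)$ subdiagonal entry $\nu a_{i+1,i} - \mu b_{i+1,i}$ of $\nu A - \mu B$. If $b_{i+1,i} = 0$ then $\xi_i = \infty$, so $\mu/\nu \neq \infty$ forces $\nu \neq 0$, and properness forces $a_{i+1,i} \neq 0$, leaving the entry equal to $\nu a_{i+1,i} \neq 0$; if $b_{i+1,i} \neq 0$ the entry vanishes exactly when $\mu/\nu = \xi_i$, which is excluded. Item II is the same computation read in reverse: position $(k{+}1,k)$ is precisely the one subdiagonal entry forced to vanish, giving the claimed block upper triangular shape with Hessenberg diagonal blocks of sizes $k$ and $n-k$. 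For III, $M$ and $N$ are Hessenberg by linearity, and both properness conditions reduce to invertibility of the $2 \times 2$ matrix $\bigl(\begin{smallmatrix}\beta & -\alpha \\ \nu & -\mu\end{smallmatrix}\bigr)$, whose determinant $\alpha\nu - \beta\mu$ is nonzero by hypothesis: applied to the pair $(a_{i+1,i}, b_{i+1,i})$ it preserves \emph{not simultaneously zero} (condition~I for $(M,N)$), and applied to the first-column pair $\bm a_1, \bm b_1$ (respectively the last-row pair) it preserves linear independence (condition~II).

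For IV I use induction on $k$. The base case $k=1$ is exactly the first-column half of property~II of \Cref{def:ratHess}. For the inductive step, assume $V := \mathcal{R}(\bm a_1, \ldots, \bm a_k) = \mathcal{R}(\bm b_1, \ldots, \bm b_k)$ for some $k \geq 2$. By item~I, for any $\zeta \notin \Xi$ the first $k$ columns $\bm a_i - \zeta \bm b_i$ of the proper Hessenberg matrix $A - \zeta B$ are linearly independent, and they lie in $V$, so $\dim V = k$. Now inspect the $(k{+}1)$-st coordinate. If $a_{k+1,k} = 0$ then $V \subseteq \mathcal{E}_k$, while $\bm b_k \in V$ has a nonzero $(k{+}1)$-st entry by properness --- a direct contradiction; the case $b_{k+1,k}=0$ is symmetric. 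Otherwise both entries are nonzero, and projecting onto the $(k{+}1)$-st coordinate identifies $V \cap \mathcal{E}_k = \mathcal{R}(\bm a_1, \ldots, \bm a_{k-1}) = \mathcal{R}(\bm b_1, \ldots, \bm b_{k-1})$, contradicting the induction hypothesis at level $k-1$.

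The main obstacle is IV: a naive entry-by-entry comparison of $\bm a_i$ and $\bm b_i$ is awkward because poles can be $0$ or $\infty$ and the subdiagonals of $A$ and $B$ individually can vanish. The clean workaround is to use item~I to produce a proper Hessenberg combination $A - \zeta B$, which pins down $\dim V = k$, and then use the projection onto the $(k{+}1)$-st coordinate to reduce equality of spans at level $k$ to the same equality at level $k-1$. Items~I--III themselves amount to routine bookkeeping.
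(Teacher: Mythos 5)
Your items I--III coincide with the paper's treatment: the paper dismisses I and II as trivial (your subdiagonal-entry check is precisely that routine verification), and your item III is the paper's argument -- the nonsingular $2\times 2$ matrix $\bigl(\begin{smallmatrix}\beta & -\alpha\\ \nu & -\mu\end{smallmatrix}\bigr)$ applied to the subdiagonal pairs, the first columns, and the last rows. For item IV, however, you take a genuinely different route. The paper assumes $\mathcal{R}(\bm{a}_1,\hdots,\bm{a}_{k+1}) = \mathcal{R}(\bm{b}_1,\hdots,\bm{b}_{k+1})$, writes $[\bm{a}_1,\hdots,\bm{a}_{k+1}] = [\bm{b}_1,\hdots,\bm{b}_{k+1}]\,C$, uses the induction hypothesis to locate a column $j\leq k$ with $c_{k+1,j}\neq 0$, and then the Hessenberg zero $a_{k+2,j}=0$ forces $b_{k+2,k+1}=0$ and in turn $a_{k+2,k+1}=0$, contradicting properness directly. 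You instead use item I to pin down $\dim V = k$ via the proper combination $A-\zeta B$ with $\zeta\notin\Xi$, eliminate the cases $a_{k+1,k}=0$ or $b_{k+1,k}=0$ outright, and otherwise project onto the $(k{+}1)$st coordinate to descend to $\mathcal{R}(\bm{a}_1,\hdots,\bm{a}_{k-1}) = \mathcal{R}(\bm{b}_1,\hdots,\bm{b}_{k-1})$, contradicting the induction hypothesis one level down. Both arguments are correct; the paper's is shorter (one matrix identity and one entry computation, with the contradiction landing on properness itself), while yours reuses item I and makes the dimension bookkeeping explicit -- which, incidentally, is exactly what justifies the nonsingularity of the matrix $C$ that the paper's proof asserts without comment. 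One step you should spell out: the identification $V\cap\mathcal{E}_k=\mathcal{R}(\bm{a}_1,\hdots,\bm{a}_{k-1})$ requires $\bm{a}_1,\hdots,\bm{a}_{k-1}$ to be linearly independent, which is not automatic for a Hessenberg pair (the columns of $A$ alone may be dependent, since only the pair is proper); it does follow here because $\dim V = k$ makes $\bm{a}_1,\hdots,\bm{a}_k$ a basis of $V$, but that one line belongs in the proof.
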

\begin{proof}
Statements I.\ and II.\ are trivial.
The pencil of statement III.\ satisfies the definition of a proper Hessenberg pair: $M$ and $N$ are clearly upper Hessenberg matrices, their $k$th subdiagonal elements are,
\begin{equation*}
\begin{bmatrix}
m_{k+1,k} \\
n_{k+1,k}
\end{bmatrix}
=
\begin{bmatrix}
\beta & -\alpha \\
\nu & -\mu
\end{bmatrix}
\begin{bmatrix}
a_{k+1,k} \\
b_{k+1,k}
\end{bmatrix}.
\end{equation*}
The vector on the left is different from zero since the matrix is nonsingular and the vector on the right is nonzero.
The first column of $M$ is also linear independent from the first column of $N$ because the same nonsingular matrix is used in the transformation.
The same holds for the last row.
The proof of statement IV.\ is by induction and contradiction.
The case $k=1$ follows from the definition of a proper Hessenberg pair.
% $\mathcal{R}(\bm{a}_1) \neq \mathcal{R}(\bm{b}_1)$ as otherwise there is some scalar $\gamma$ such that $\bm{a}_1 = \gamma \bm{b}_1$.
%This means that $\gamma$ is both the first pole, $\xi_1 = a_{21}/b_{21} = \gamma$, and an eigenvalue, $A\bm{e}_1 = \gamma B \bm{e}_1$, which is not allowed.
Suppose the statement holds up to column $k$.  We assume now, by contradiction, that it breaks down at column
$k+1$, thus $\mathcal{R}(\bm{a}_1,\hdots,\bm{a}_{k+1}) = \mathcal{R}(\bm{b}_1,\hdots,\bm{b}_{k+1})$.
The equality implies the existence of a nonsingular $(k+1){\times}(k+1)$ matrix $C$ such that,
\begin{equation}
\label{eq:colspanpropHess}
\bigl[\bm{a}_1,\hdots,\bm{a}_{k+1} \bigr] = \bigl[\bm{b}_1,\hdots,\bm{b}_{k+1} \bigr] 
\begin{bmatrix}
c_{11} & \hdots & c_{1,k+1} \\
\vdots	& \ddots & \vdots \\
c_{k+1,1} & \hdots & c_{k+1,k+1}
\end{bmatrix}.
\end{equation}
It follows from the induction hypothesis that there is a $j$ with $1 \leq j \leq k$ such that 
$\bm{a}_j \notin \mathcal{R}(\bm{b}_1,\hdots,\bm{b}_k)$. Therefore $c_{k+1,j} \neq 0$.
By the Hessenberg structure,
\begin{equation*}
0 = a_{k+2,j} = \sum_{i=1}^{k+1} b_{k+2,i} \, c_{i,j} = b_{k+2,k+1} c_{k+1,j}.
\end{equation*}
%Since by the induction hypothesis $\mathcal{R}(\bm{a}_1,\hdots,\bm{a}_{k}) \neq
%\mathcal{R}(\bm{b}_1,\hdots,\bm{b}_{k}$), a nonzero component in the direction of the
%vector $\bm{b}_{k+1}$ is required to generate $\bm{a}_1,\hdots,\bm{a}_{k}$.  This means
%that the coefficients $c_{k+1,1}, \hdots, c_{k+1,k}$ must be nonzero.  Given that $A$ and $B$
%are upper Hessenberg matrices, the first $k$ elements of the $(k+2)$nd row of $A$ and
%$B$ are zero. 
This implies that $b_{k+2,k+1}$ must be zero. 
Equation \eqref{eq:colspanpropHess} consequently implies that also $a_{k+2,k+1} = 0$. 
These two values being simultaneously zero contradicts the properness.
 % By the
% same equation, this would also mean that $a_{i+1,i} = 0$.  This is in contradiction with
% $(A,B)$ being a proper Hessenberg pair.
\end{proof}

% The ideas presented in this paper are akin to these results, however we study
% Hessenberg matrix pairs as proper mathematical entities from the point of view of
% a direct eigenvalue method rather than an iterative method. In \cref{sec:RK} we apply our
% findings on Hessenberg pairs in order to filter an RK subspace.

%%%%%%%%%%%%%%%%%%%%%%%%%%%%%%%%%%%%%%%%%%%%%%%%%%%%%%%%%%%%%%%%%%%%%
%% SUBSEC: MANIPULATING POLES
%%%%%%%%%%%%%%%%%%%%%%%%%%%%%%%%%%%%%%%%%%%%%%%%%%%%%%%%%%%%%%%%%%%%%

\subsection{Manipulating the poles of a Hessenberg pair.}
\label{subsec:manipulate}
In this section we will revisit two operations for manipulating the poles of a Hessenberg
pair, namely changing the first or the last pole, and swapping poles (see also Berljafa \& G\"uttel \cite{Berljafa2015}).

\paragraph{Changing poles at the boundaries.}
Let  $A,B \in \CC^{n{\times}n}$ be a proper Hessenberg pair and assume
the first pole $\xi_1$ different from the eigenvalues of $(A,B)$.
The pole $\xi_1$ can be changed to another pole $\hat{\xi}_1 \in
\bar{\CC}$ by multiplying $(A,B)$ from the left with a unitary transformation
$Q_1^*$, where $Q_1^*\bm{x}=\alpha \bm{e}_1$ and,
\begin{eqnarray}\label{eq:intro_single_pole}
%\bm{x} = \gamma (AB^{-1} - \hat{\xi}_1 I ) (AB^{-1} - \hat{\xi}_1 I) \bm{e}_1,
\bm{x} & = & \hat{\gamma} \, \LinOp{\hat{\alpha}_1}{\hat{\beta}_1}
\LinOpInv{\alpha_1}{\beta_1} \bm{e}_1 \\
\nonumber & = & \gamma \, \LinOp{\hat{\xi}_1}{}
\LinOpInv{\xi_1}{} \bm{e}_1,
\end{eqnarray}
with $\gamma$ and $\hat{\gamma}$ convenient scaling factors; and $\hat{\alpha}_1,
\hat{\beta}_1,\alpha_1,\beta_1 \in \CC$ are chosen to satisfy the new pole
$\hat{\xi}_1 = \hat{\alpha}_1 / \hat{\beta}_1$ and the old pole $\xi_1 = \alpha_1 / \beta_1$. 
The notation with $\alpha$ and $\beta$ to denote $\LinOp{{\alpha}}{{\beta}}$ is factually
the most correct one. For notational simplicity, however, we will often use the shorthand
notation $ \LinOp{{\xi}}{}$, where $\xi=\alpha/\beta$ instead.
As $\hat{\xi}_1 \neq \xi_1$, otherwise nothing needs to be done,
$\bm{x}$ must be a vector with only the two leading
elements nonzero and thus $Q_1$ is always well defined and can, for example, be chosen as
a rotation matrix.

If $Q_1$ is used to compute $(\hat{A},\hat{B}) = Q_1^* (A,B)$, then
$\hat{\xi}_1$ will become the first pole of
$(\hat{A},\hat{B})$ because the first subdiagonal element of $(\hat{A}-\hat{\xi}_1\hat{B})$ is zero:
\begin{eqnarray*}
   (\hat{A}-\hat{\xi}_1 \hat{B}) \bm{e}_1 & = &  Q_1^* (A-\hat{\xi}_1 B) \bm{e}_1  \\
  & = & \tilde{\gamma}\, Q_1^* (A-\hat{\xi}_1 B) (A-{\xi}_1 B)^{-1} \bm{e}_1 = 
\frac{\tilde{\gamma}}{\gamma}\, Q_1^* \bm{x} = \frac{\alpha\,\tilde{\gamma}}{\gamma} \bm{e}_1.
\end{eqnarray*}

Theoretically, under the  assumption that $B$ is
nonsingular, we could equally well define $\bm{x}=\gamma  (AB^{-1} - \hat{\xi}_1 I )
(AB^{-1} - {\xi}_1 I)^{-1} \bm{e}_1.$ 
Practically, however, to avoid the nonsingularity assumption of $B$, 
and for reasons of numerical stability,
we stick to \eqref{eq:intro_single_pole}.

\begin{remark}

As $\LinOpInv{\xi_1}{}\bm{e}_1$ is scalar multiple of
$\bm{e}_1$ there is no need to compute this in practice.
Moreover, even if $\xi_1$ is an eigenvalue, a scalar multiple
of $\bm{e}_1$ is always a solution of $\LinOp{\xi}{}\bm{y} = \bm{e}_1$.
The inverse factor is included to emphasize the rational function
used to update the pole and moreover, it is consistent with the analysis
of Vandebril \& Watkins \cite{Vandebril2012,Vandebril2013} where it does play
a role in the multishift setting.
In practice we compute $\bm{x} = \gamma \LinOp{\hat{\xi}_1}{} \bm{e}_1$ in
$O(1)$ operations.
\end{remark}

We can compute an equivalence transformation to change the last pole,
by operating from the right on the Hessenberg pair in a comparable way.
Assume $\xi_{n-1}$ different from the eigenvalues of $(A,B)$.
We can change the pole $\xi_{n-1}$  to $\hat{\xi}_{n-1} \in
\bar{\CC}$. 
If we consider the row vector
$
\bm{x}^{T} = \gamma \bm{e}_{n}^{T} \LinOpInv{\xi_{n-1}}{}\LinOp{\hat{\xi}_{n-1}}{},
$
with $\gamma$ a convenient scaling factor,
and a transformation $Z_{n-1}$ that introduces a zero in the penultimate position of $\bm{x}^T$,
%\begin{align}
$\bm{x}^T Z_{n-1} = \alpha \bm{e}_{n}^T,$
%\end{align}
then the last pole in the Hessenberg pair $(\hat{A},\hat{B}) = (A,B) Z_{n-1}$ is changed to $\hat{\xi}_{n-1}$.

Again, the system $\bm{e}_{n}^{T} \LinOpInv{\xi_{n-1}}{}$ is never solved in practice
as the solution is a scalar multiple of $\bm{e}_{n}^{T}$ but only included form theoretical purposes.

%%%%%%%%%%%%%%%%%%%%%%%%%%%%%%%%%%%%%%%%%%%%%%%%%%%%%%%%%%%%%%%%%%%%%
%% PARAGRAPH: SWAPPING POLES
%%%%%%%%%%%%%%%%%%%%%%%%%%%%%%%%%%%%%%%%%%%%%%%%%%%%%%%%%%%%%%%%%%%%%
\paragraph{Swapping poles.}
Any two consecutive poles $\xi_i$ and $\xi_{i+1}$ in a proper Hessenberg
pair $(A,B)$ can be \textit{swapped} via a unitary equivalence on $(A,B)$.  We assume both
poles to be different, otherwise nothing needs to be done.
This procedure
is illustrated in \cref{fig:swap}, where poles $\xi_3 = $ \circled{3}/\circledletter{c}
and $\xi_4 = $ \circled{4}/\circledletter{d} are swapped. The swapping is achieved by
computing unitary matrices $Q_4$ and $Z_3$ that change the order of the eigenvalues in the
$2{\times}2$ blocks $A_{4:5,3:4}$ and $B_{4:5,3:4}$. These blocks are indicated with the
shaded region in \cref{fig:swap}. The equivalence transformation affects all elements
marked with $\otimes$ in pane II of \cref{fig:swap}.  Note that the ratios
\blcircled{\textcolor{white}{\textbf{4}}}{/}\blcircledletter{\textcolor{white}{\textbf{d}}}
and
\blcircled{\textcolor{white}{\textbf{3}}}{/}\blcircledletter{\textcolor{white}{\textbf{c}}}
are preserved under swapping but the subdiagonal values themselves can change.
\begin{figure}[H]
\centering
\begin{tikzpicture}[scale=1.66,y=-1cm]
% Pane I
% A
\def\xosA{0}
\node[] at (0.1+\xosA,0.7) {$A$};
\draw (0+\xosA,0) -- (1.4+\xosA,0) -- (1.4+\xosA,1.4) -- (1+\xosA,1.4) -- (0+\xosA,0.4) -- cycle;

\draw[fill={rgb:red,66;green,124;blue,244}, fill opacity=0.5,rounded corners=3pt] (0.48+\xosA,0.68) rectangle (0.92+\xosA,1.12);

\foreach \y in {1,...,6}{
	\foreach \x in {\y,...,6}{
		\node[] at (\x/5+\xosA,\y/5) {$\times$};
	}
};	

\foreach \x in {1,...,5}{
	\node[] (n\x) at (\x/5+\xosA,\x/5+0.2) {\circled{\x}};
}

\path[<->] (n3) edge [min distance=4mm,in=180,out=180,looseness=4] (n4);

% B
\def\xosB{2}
\node[] at (0.1+\xosB,0.7) {$B$};

\draw (0+\xosB,0) -- (1.4+\xosB,0) -- (1.4+\xosB,1.4) -- (1+\xosB,1.4) -- (0+\xosB,0.4) -- cycle;

\draw[fill={rgb:red,66;green,124;blue,244}, fill opacity=0.5,rounded corners=3pt] (0.48+\xosB,0.68) rectangle (0.92+\xosB,1.12);

\foreach \y in {1,...,6}{
	\foreach \x in {\y,...,6}{
		\node[] at (\x/5+\xosB,\y/5) {$\times$};
	}
};	
\foreach \x in {1,...,5}{
	\node[] (l\x) at (\x/5+\xosB,\x/5+0.2) {\circledletter{\alphalph{\x}}};
}

\path[<->] (l3) edge [min distance=4mm,in=180,out=180,looseness=4] (l4);

\draw[rounded corners=3pt] (-0.2+\xosA,-0.4) rectangle (\xosB+1.6,1.6);
\node[] at (\xosA,-0.2) {I.};

% Pane II
% A
\def\xosA{3.8}
\node[] at (0.1+\xosA,0.7) {$A$};
\draw (0+\xosA,0) -- (1.4+\xosA,0) -- (1.4+\xosA,1.4) -- (1+\xosA,1.4) -- (0+\xosA,0.4) -- cycle;

\draw[fill={rgb:red,66;green,124;blue,244}, fill opacity=0.5,rounded corners=3pt] (0.48+\xosA,0.68) rectangle (0.92+\xosA,1.12);

% we do it manually to show the changes
% row-by-row
% row 1
\node[] at (0.2+\xosA,0.2) {$\times$};
\node[] at (0.4+\xosA,0.2) {$\times$};
\node[] at (0.6+\xosA,0.2) {$\otimes$};
\node[] at (0.8+\xosA,0.2) {$\otimes$};
\node[] at (1+\xosA,0.2) {$\times$};
\node[] at (1.2+\xosA,0.2) {$\times$};
% row 2
\node[] at (0.4+\xosA,0.4) {$\times$};
\node[] at (0.6+\xosA,0.4) {$\otimes$};
\node[] at (0.8+\xosA,0.4) {$\otimes$};
\node[] at (1+\xosA,0.4) {$\times$};
\node[] at (1.2+\xosA,0.4) {$\times$};
% row 3
\node[] at (0.6+\xosA,0.6) {$\otimes$};
\node[] at (0.8+\xosA,0.6) {$\otimes$};
\node[] at (1+\xosA,0.6) {$\times$};
\node[] at (1.2+\xosA,0.6) {$\times$};
% row 4
\node[] at (0.8+\xosA,0.8) {$\otimes$};
\node[] at (1+\xosA,0.8) {$\otimes$};
\node[] at (1.2+\xosA,0.8) {$\otimes$};
% row 5
\node[] at (1+\xosA,1) {$\otimes$};
\node[] at (1.2+\xosA,1) {$\otimes$};
% row 6
\node[] at (1.2+\xosA,1.2) {$\times$};

% subdiagonal
\node[] at (0.2+\xosA,0.4) {\circled{1}};
\node[] at (0.4+\xosA,0.6) {\circled{2}};
\node[] at (0.6+\xosA,0.8) {\blcircled{\textcolor{white}{\textbf{4}}}};
\node[] at (0.8+\xosA,1) {\blcircled{\textcolor{white}{\textbf{3}}}};
\node[] at (1+\xosA,1.2) {\circled{5}};

% B
\def\xosB{5.8}
\node[] at (0.1+\xosB,0.7) {$B$};
\draw (0+\xosB,0) -- (1.4+\xosB,0) -- (1.4+\xosB,1.4) -- (1+\xosB,1.4) -- (0+\xosB,0.4) -- cycle;

\draw[fill={rgb:red,66;green,124;blue,244}, fill opacity=0.5,rounded corners=3pt] (0.48+\xosB,0.68) rectangle (0.92+\xosB,1.12);

% we do it manually to show the changes
% row-by-row
% row 1
\node[] at (0.2+\xosB,0.2) {$\times$};
\node[] at (0.4+\xosB,0.2) {$\times$};
\node[] at (0.6+\xosB,0.2) {$\otimes$};
\node[] at (0.8+\xosB,0.2) {$\otimes$};
\node[] at (1+\xosB,0.2) {$\times$};
\node[] at (1.2+\xosB,0.2) {$\times$};
% row 2
\node[] at (0.4+\xosB,0.4) {$\times$};
\node[] at (0.6+\xosB,0.4) {$\otimes$};
\node[] at (0.8+\xosB,0.4) {$\otimes$};
\node[] at (1+\xosB,0.4) {$\times$};
\node[] at (1.2+\xosB,0.4) {$\times$};
% row 3
\node[] at (0.6+\xosB,0.6) {$\otimes$};
\node[] at (0.8+\xosB,0.6) {$\otimes$};
\node[] at (1+\xosB,0.6) {$\times$};
\node[] at (1.2+\xosB,0.6) {$\times$};
% row 4
\node[] at (0.8+\xosB,0.8) {$\otimes$};
\node[] at (1+\xosB,0.8) {$\otimes$};
\node[] at (1.2+\xosB,0.8) {$\otimes$};
% row 5
\node[] at (1+\xosB,1) {$\otimes$};
\node[] at (1.2+\xosB,1) {$\otimes$};
% row 6
\node[] at (1.2+\xosB,1.2) {$\times$};

% subdiagonal
\node[] at (0.2+\xosB,0.4) {\circledletter{a}};
\node[] at (0.4+\xosB,0.6) {\circledletter{b}};
\node[] at (0.6+\xosB,0.8) {\blcircledletter{\textcolor{white}{\textbf{d}}}};
\node[] at (0.8+\xosB,1) {\blcircledletter{\textcolor{white}{\textbf{c}}}};
\node[] at (1+\xosB,1.2) {\circledletter{e}};

\draw[rounded corners=3pt] (-0.2+\xosA,-0.4) rectangle (\xosB+1.6,1.6);
\node[] at (\xosA,-0.2) {II.};
\end{tikzpicture}
\label{fig:swap}
\caption{Swapping poles in a Hessenberg pair: (I) before swap, (II) after swap.}
\end{figure}
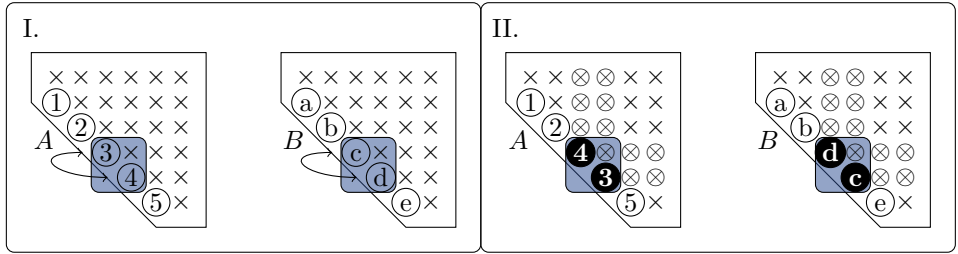
Swapping diagonal elements in an upper triangular matrix is a classical problem,
also used to reorder the (generalized) Schur form. It can be solved as the solution of
a coupled Sylvester equation \cite{Kagstrom1996} or by direct computations \cite{VanDooren1981}.
Its solution is unique if $\xi_i$
differs from $\xi_{i+1}$.

Details and solutions are found, e.g., in Watkins \cite{p509},
K{\aa}gstr\"om \& Poromaa \cite{Kagstrom1996}, and Van Dooren \cite{VanDooren1981}.
In \cite{VanDooren1981} it is also proven that the problem can be solved in a backwards
stable manner.

%%%%%%%%%%%%%%%%%%%%%%%%%%%%%%%%%%%%%%%%%%%%%%%%%%%%%%%%%%%%%%%%%%%%%
% SECTION: DIRECT REDUCTION TO PROPER HESSENBERG FORM
%%%%%%%%%%%%%%%%%%%%%%%%%%%%%%%%%%%%%%%%%%%%%%%%%%%%%%%%%%%%%%%%%%%%%
\section{Direct reduction to a proper Hessenberg pair.}
\label{sec:dirred}
\label{sec:red}

The rational QZ algorithm we propose in \Cref{sec:rqz} operates on a proper
Hessenberg pair.  If we are given an arbitrary matrix pencil $(A,B)$ not yet in (proper)
Hessenberg form, we first need to transform it to this form.  We use equivalences since we
are interested in the eigenvalues and, for reasons of numerical stability we will stick to
unitary equivalences. At the end of the section we will illustrate with a numerical
experiment that clever pole selection can lead to deflations, already in the reduction
process.

\subsection{The reduction algorithm.}
\label{subsec:dirred_algo}

We will transform an $n{\times}n$ matrix pair $(A,B)$ to a unitary equivalent Hessenberg
pair with a prescribed tuple of poles $\Xi = ( \xi_1$, $\hdots$ $\xi_{n-1})$.
The algorithm proceeds similarly to the direct reduction
to Hessenberg, triangular form, with the difference that a pole is introduced at every
step.

As in the classical reduction to Hessenberg, upper triangular pair we commence with
computing a QR factorization of $B=QR$ and updating the matrix pair to $(Q^*A, Q^*B)$.
The matrix  $Q^*B$ is now already in upper triangular form. This is shown in pane I of
\cref{fig:red1} for our running example matrix pair of size $5 \times 5$. Moreover, we assume in
the remainder of this section, that all zeros on the diagonal of $B$ --infinite
eigenvalues-- are removed \cite{b333}.

We will now bring the first column of $A$ to Hessenberg form.
%introduce now zeros in the first column of $A$ and chase the bulges that are introduced
%in $B$. 
In pane II, a zero is introduced in position $(5,1)$ of matrix $A$ by operating on
the last two rows.  This destroys the upper triangular shape in the last two rows of $B$.
The upper triangular shape can be restored by acting on columns $4$ and $5$ as shown in
pane III without destroying the newly created zero in $A$.

% The reduction commences with computing a QR factorization of $B$ and updating the matrix
% pair to $(Q^*A, Q^*B=R)$. This is shown in pane I of \cref{fig:red_1} for an example
% matrix pair of size $5{\times}5$.  In pane II, a zero is introduced in position $(5,1)$ of
% matrix $A$ by operating on the last two rows.  This destroys the upper triangular shape in
% the last two rows of $B$.  The upper triangular shape can be restored by acting on columns
% $4$ and $5$ as shown in pane III without destroying the newly created zero in $A$.

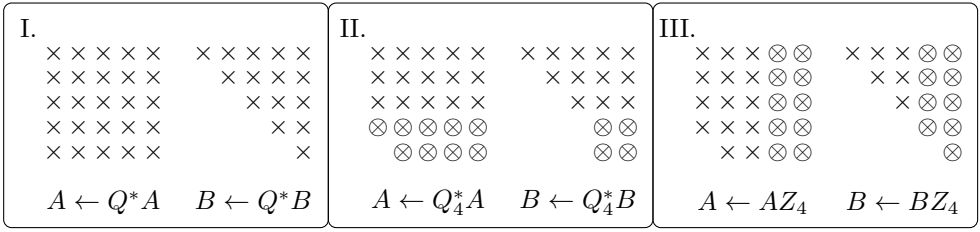
\begin{figure}[htp]
\centering
\begin{tikzpicture}[scale=1.66,y=-1cm]
% Pane 1
% A
\def\xosA{0}
\node[] at (0.6+\xosA,1.4) {$A \leftarrow Q^{*}A$};
\foreach \y in {1,...,5}{
	\foreach \x in {1,...,5}{
		\node[] at (\x/5+\xosA,\y/5) {$\times$};
	}
};	

% B
\def\xosB{1.2}
\node[] at (0.6+\xosB,1.4) {$B \leftarrow Q^{*}B$};
\foreach \y in {1,...,5}{
	\foreach \x in {\y,...,5}{
		\node[] at (\x/5+\xosB,\y/5) {$\times$};
	}
};
\draw[rounded corners=3pt] (-0.2+\xosA,-0.2) rectangle (\xosB+1.2,1.6);
\node[] at (\xosA,0) {I.};

% Pane 2
% A
\def\xosA{2.6}
\node[] at (0.6+\xosA,1.4) {$A \leftarrow Q_4^{*}A$};
\foreach \y in {1,...,3}{
	\foreach \x in {1,...,5}{
		\node[] at (\x/5+\xosA,\y/5) {$\times$};
	}
};
\foreach \y in {4,5}{
	\foreach \x in {2,...,5}{
		\node[] at (\x/5+\xosA,\y/5) {$\otimes$};
	}
};
\node[] at (0.2+\xosA,0.8) {$\otimes$};	

% B
\def\xosB{3.8}
\node[] at (0.6+\xosB,1.4) {$B \leftarrow Q_4^{*}B$};
\foreach \y in {1,...,3}{
	\foreach \x in {\y,...,5}{
		\node[] at (\x/5+\xosB,\y/5) {$\times$};
	}
};
\foreach \y in {4,5}{
	\foreach \x in {4,5}{
		\node[] at (\x/5+\xosB,\y/5) {$\otimes$};
	}
};

\draw[rounded corners=3pt] (-0.2+\xosA,-0.2) rectangle (\xosB+1.2,1.6);
\node[] at (\xosA,0) {II.};

% Pane 3
% A
\def\xosA{5.2}
\node[] at (0.6+\xosA,1.4) {$A \leftarrow A Z_4$};
\foreach \y in {1,...,4}{
		\node[] at (0.2+\xosA,\y/5) {$\times$};
};
\foreach \y in {1,...,5}{
	\foreach \x in {2,3}{
		\node[] at (\x/5+\xosA,\y/5) {$\times$};
	}
};
\foreach \y in {1,...,5}{
	\foreach \x in {4,5}{
		\node[] at (\x/5+\xosA,\y/5) {$\otimes$};
	}
};

% B
\def\xosB{6.4}
\node[] at (0.6+\xosB,1.4) {$B \leftarrow B Z_4$};
\foreach \y in {1,...,3}{
	\foreach \x in {\y,...,3}{
		\node[] at (\x/5+\xosB,\y/5) {$\times$};
	}
};
\foreach \y in {1,...,4}{
	\foreach \x in {4,5}{
		\node[] at (\x/5+\xosB,\y/5) {$\otimes$};
	}
};
\node[] at (1+\xosB,1) {$\otimes$};

\draw[rounded corners=3pt] (-0.2+\xosA,-0.2) rectangle (\xosB+1.2,1.6);
\node[] at (\xosA,0) {III.};

\end{tikzpicture}
\label{fig:red1}
\caption{Reduction to a Hessenberg pencil. First part.}
\end{figure}

The process of introducing zeros in the first column of $A$ by acting on the rows and
maintaining the upper triangular structure in $B$ by acting on the columns can be repeated
until the first column of $A$ is brought to upper Hessenberg shape. This coincides
 with the standard reduction to a Hessenberg, triangular pair \cite{b333}.
%After having brought the first column of $A$ to Hessenberg form 
We have arrived at
%This state is shown in
pane I of \cref{fig:red2}. 
%and is entirely similar to the reduction method for the
%standard QZ method.  
The first column of $(A,B)$ is now already in the correct form, but has a pole at
$\infty$. We replace $\infty$ by another pole using the techniques from \Cref{subsec:manipulate}
applied to the first column of $(A,B)$ which is in Hessenberg form.
This is always possible, except when there is an obvious deflation in the top left corner,
meaning that the current pole is undefined as $0/0$.
This does not pose any problems: deflate and continue.
We start by introducing the last
pole $\xi_4 =$ \circled{4}{/}\circledletter{d} first, as in the following steps of the
reduction procedure this pole will move down to end up at the correct position at the
bottom of the subdiagonal.  The current state of the pair is visualized in pane II of
\cref{fig:red2}.

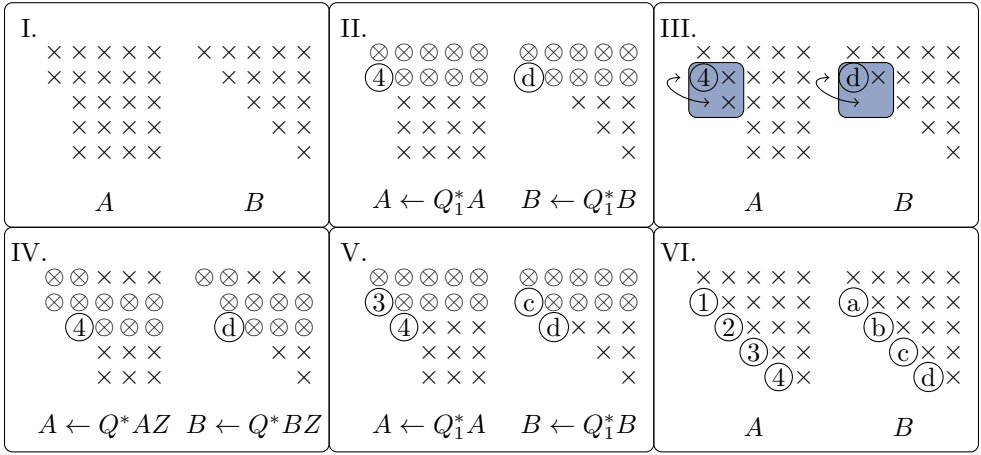
\begin{figure}[htp]
\centering
\begin{tikzpicture}[scale=1.66,y=-1cm]
% Pane 1
% A
\def\xosA{0}
\node[] at (0.6+\xosA,1.4) {$A$};
\node[] at (0.2+\xosA,0.2) {$\times$};
\node[] at (0.2+\xosA,0.4) {$\times$};
\foreach \y in {1,...,5}{
	\foreach \x in {2,...,5}{
		\node[] at (\x/5+\xosA,\y/5) {$\times$};
	}
};	

% B
\def\xosB{1.2}
\node[] at (0.6+\xosB,1.4) {$B$};
\foreach \y in {1,...,5}{
	\foreach \x in {\y,...,5}{
		\node[] at (\x/5+\xosB,\y/5) {$\times$};
	}
};
\draw[rounded corners=3pt] (-0.2+\xosA,-0.2) rectangle (\xosB+1.2,1.6);
\node[] at (\xosA,0) {I.};

% Pane 2
% A
\def\xosA{2.6}
\node[] at (0.6+\xosA,1.4) {$A \leftarrow Q_1^{*}A$};
\node[] at (0.2+\xosA,0.2) {$\otimes$};
\foreach \y in {1,2}{
	\foreach \x in {2,...,5}{
		\node[] at (\x/5+\xosA,\y/5) {$\otimes$};
	}
};
\foreach \y in {3,...,5}{
	\foreach \x in {2,...,5}{
		\node[] at (\x/5+\xosA,\y/5) {$\times$};
	}
};
\node[] at (0.2+\xosA,0.4) {\circled{4}};

% B
\def\xosB{3.8}
\node[] at (0.6+\xosB,1.4) {$B \leftarrow Q_1^{*}B$};
\foreach \y in {1,2}{
	\foreach \x in {\y,...,5}{
		\node[] at (\x/5+\xosB,\y/5) {$\otimes$};
	}
};
\node[] at (0.2+\xosB,0.4) {\circledletter{d}};

\foreach \y in {3,...,5}{
	\foreach \x in {\y,...,5}{
		\node[] at (\x/5+\xosB,\y/5) {$\times$};
	}
};

\draw[rounded corners=3pt] (-0.2+\xosA,-0.2) rectangle (\xosB+1.2,1.6);
\node[] at (\xosA,0) {II.};

% Pane 3
% A
\def\xosA{5.2}

\draw[fill={rgb:red,66;green,124;blue,244}, fill opacity=0.5,rounded corners=3pt] (0.08+\xosA,0.28) rectangle (0.52+\xosA,0.72);

\node[] at (0.6+\xosA,1.4) {$A$};
\node[] at (0.2+\xosA,0.2) {$\times$};
\node[] at (0.4+\xosA,0.2) {$\times$};
\node[] at (0.4+\xosA,0.4) {$\times$};
\node[] (l2) at (0.4+\xosA,0.6) {$\times$};
\foreach \y in {1,...,5}{
	\foreach \x in {3,...,5}{
		\node[] at (\x/5+\xosA,\y/5) {$\times$};
	}
};
\node[] (l1) at (0.2+\xosA,0.4) {\circled{4}};

\path[<->] (l1) edge [min distance=1mm,in=180,out=180,looseness=2] (l2);

% B
\def\xosB{6.4}
\node[] at (0.6+\xosB,1.4) {$B$};

\draw[fill={rgb:red,66;green,124;blue,244}, fill opacity=0.5,rounded corners=3pt] (0.08+\xosB,0.28) rectangle (0.52+\xosB,0.72);

\foreach \y in {1,...,5}{
	\foreach \x in {\y,...,5}{
		\node[] at (\x/5+\xosB,\y/5) {$\times$};
	}
};
\node[] (r1) at (0.2+\xosB,0.4) {\circledletter{d}};
\node[] (r2) at (0.4+\xosB,0.6) {\phantom{$\times$}};

\path[<->] (r1) edge [min distance=1mm,in=180,out=180,looseness=2] (r2);

\draw[rounded corners=3pt] (-0.2+\xosA,-0.2) rectangle (\xosB+1.2,1.6);
\node[] at (\xosA,0) {III.};

% SECOND ROW
% %%%%%%%%%%%%%%%%%%%%%%%%%%%%%%%%%%%%%%%%%%%%%%%%%%%
\def\yosr2{1.8}

% Pane 4
% A
\def\xosA{0}
\node[] at (0.6+\xosA,1.4+\yosr2) {$A \leftarrow Q^* A Z$};
\node[] at (0.2+\xosA,0.2+\yosr2) {$\otimes$};
\node[] at (0.2+\xosA,0.4+\yosr2) {$\otimes$};
\node[] at (0.4+\xosA,0.4+\yosr2) {$\otimes$};
\node[] at (0.6+\xosA,0.2+\yosr2) {$\times$};
\node[] at (0.4+\xosA,0.2+\yosr2) {$\otimes$};
\foreach \y in {2,3}{
	\foreach \x in {3,...,5}{
		\node[] at (\x/5+\xosA,\y/5+\yosr2) {$\otimes$};
	}
};

\foreach \y in {1}{
	\foreach \x in {4,5}{
		\node[] at (\x/5+\xosA,\y/5+\yosr2) {$\times$};
	}
};
\foreach \y in {4,5}{
	\foreach \x in {3,4,5}{
		\node[] at (\x/5+\xosA,\y/5+\yosr2) {$\times$};
	}
};

\node[] (l1) at (0.4+\xosA,0.6+\yosr2) {\circled{4}};

% B
\def\xosB{1.2}
\node[] at (0.6+\xosB,1.4+\yosr2) {$B \leftarrow Q^* B Z$};
\foreach \y in {1}{
	\foreach \x in {3,4,5}{
		\node[] at (\x/5+\xosB,\y/5+\yosr2) {$\times$};
	}
};

\foreach \y in {4,5}{
	\foreach \x in {\y,...,5}{
		\node[] at (\x/5+\xosB,\y/5+\yosr2) {$\times$};
	}
};

\node[] at (0.4+\xosB,0.2+\yosr2) {$\otimes$};
\node[] at (0.2+\xosB,0.2+\yosr2) {$\otimes$};
\foreach \y in {2,3}{
	\foreach \x in {\y,...,5}{
		\node[] at (\x/5+\xosB,\y/5+\yosr2) {$\otimes$};
	}
};

\node[] at (0.4+\xosB,0.6+\yosr2) {\circledletter{d}};

\draw[rounded corners=3pt] (-0.2+\xosA,-0.2+\yosr2) rectangle (\xosB+1.2,1.6+\yosr2);
\node[] at (\xosA,0+\yosr2) {IV.};

% Pane 5
% A
\def\xosA{2.6}
\node[] at (0.6+\xosA,1.4+\yosr2) {$A \leftarrow Q_1^* A$};
\node[] at (0.2+\xosA,0.2+\yosr2) {$\otimes$};
\foreach \y in {1,2}{
	\foreach \x in {2,...,5}{
		\node[] at (\x/5+\xosA,\y/5+\yosr2) {$\otimes$};
	}
};

\foreach \y in {3,4,5}{
	\foreach \x in {3,4,5}{
		\node[] at (\x/5+\xosA,\y/5+\yosr2) {$\times$};
	}
};

\node[] at (0.2+\xosA,0.4+\yosr2) {\circled{3}};
\node[] at (0.4+\xosA,0.6+\yosr2) {\circled{4}};

% B
\def\xosB{3.8}
\node[] at (0.6+\xosB,1.4+\yosr2) {$B \leftarrow Q_1^* B$};

\foreach \y in {3,4,5}{
	\foreach \x in {\y,...,5}{
		\node[] at (\x/5+\xosB,\y/5+\yosr2) {$\times$};
	}
};

\foreach \y in {1,2}{
	\foreach \x in {\y,...,5}{
		\node[] at (\x/5+\xosB,\y/5+\yosr2) {$\otimes$};
	}
};

\node[] at (0.2+\xosB,0.4+\yosr2) {\circledletter{c}};
\node[] at (0.4+\xosB,0.6+\yosr2) {\circledletter{d}};

\draw[rounded corners=3pt] (-0.2+\xosA,-0.2+\yosr2) rectangle (\xosB+1.2,1.6+\yosr2);
\node[] at (\xosA,0+\yosr2) {V.};

% Pane 6
% A
\def\xosA{5.2}
\node[] at (0.6+\xosA,1.4+\yosr2) {$A$};

\foreach \y in {1,...,5}{
	\foreach \x in {\y,...,5}{
		\node[] at (\x/5+\xosA,\y/5+\yosr2) {$\times$};
	}
};

\node[] at (0.2+\xosA,0.4+\yosr2) {\circled{1}};
\node[] at (0.4+\xosA,0.6+\yosr2) {\circled{2}};
\node[] at (0.6+\xosA,0.8+\yosr2) {\circled{3}};
\node[] at (0.8+\xosA,1+\yosr2) {\circled{4}};

% B
\def\xosB{6.4}
\node[] at (0.6+\xosB,1.4+\yosr2) {$B$};

\foreach \y in {1,...,5}{
	\foreach \x in {\y,...,5}{
		\node[] at (\x/5+\xosB,\y/5+\yosr2) {$\times$};
	}
};

\node[] at (0.2+\xosB,0.4+\yosr2) {\circledletter{a}};
\node[] at (0.4+\xosB,0.6+\yosr2) {\circledletter{b}};
\node[] at (0.6+\xosB,0.8+\yosr2) {\circledletter{c}};
\node[] at (0.8+\xosB,1+\yosr2) {\circledletter{d}};

\draw[rounded corners=3pt] (-0.2+\xosA,-0.2+\yosr2) rectangle (\xosB+1.2,1.6+\yosr2);
\node[] at (\xosA,0+\yosr2) {VI.};

\end{tikzpicture}
\label{fig:red2}
\caption{Reduction to a Hessenberg pencil. Second part.}
\end{figure}

The second column has been brought to Hessenberg, triangular form in pane III of
\cref{fig:red2} via the classical procedure of introducing zeros in the second column of
$A$ and maintaining the upper triangular structure in $B$.  This
procedure does not affect the existing pole $\xi_4$. At this stage, the first pole 
 equals $\xi_4$, while the second pole is $\infty$. 
The poles in the shaded region of pane III are now swapped via the techniques 
from \Cref{subsec:manipulate}, which moves the pole at $\infty$ to the top of the
matrix pair in pane IV.
The swapping technique can be used, as the two leading columns of $(A,B)$ are in
Hessenberg form at this stage of the reduction algorithm.
The swapping is always well defined, even if there is a succession of identical poles.
The pole $\xi_4$ has moved one row down and one column to the right.
The pair is now ready for the introduction of pole $\xi_3$ as
shown in pane V.  
This entire process of creating zeros, swapping poles, and introducing a new pole, can be
repeated until the end result of pane VI is obtained, and the matrix is in the desired
 Hessenberg form. 

After the reduction process, the matrix does not necessarily need to
 be in proper Hessenberg form. Possibly the pole $\xi_{n-1}$ coincides with an eigenvalue,
 allowing for deflation in the lower right corner. In this case one deflates $\xi_{n-1}$
 and checks whether $\xi_{n-2}$  leads to a deflation, and so forth, until the
 matrix has become proper.
It can also happen that during the reduction any of the interior poles deflate.
In this case the reduction can be continued on the separated parts of the pencil.
This situation is studied in the example of \Cref{subsec:example1}.
% If we reconsider Pane I of Figure~\ref{fig:red1} we see that $B$ has been brought to upper
% Hessenberg form. If $B$ is singular, a zero should be present on its diagonal, and one
% should deflate this. Therefore, we can assume after the initial reduction phase 
% $B$ to be invertible.

 The introduction of the poles takes an additional $6 n^3$ flops on top of the $8n^3$ operations
 required to reduce a pencil to Hessenberg, triangular form \cite{b037}. 
% It should be additionally 6n^3, on top of the existing 8n^3 

\subsection{Numerical experiment.} 
\label{subsec:example1}

We study two matrix pairs from the magnetohydrodynamics (MHD) dataset available 
in the Matrix Market collection \cite{Boisvert1997}.
The matrices are of sizes $416$ and $1280$ respectively, and known to be ill-conditioned.
They originate from a Galerkin finite element discretization of the underlying MHD problem.
Their spectrum consists of a tail along the negative real axis and a set of eigenvalues close to the imaginary axis.
In this numerical experiment we determine deflating subspaces for the two regions of
eigenvalues already during the reduction phase. The tests were run in
Matlab R2017b.

The idea is to introduce poles that make up a rational filter that mainly affects one
region of eigenvalues. To achieve this effect, the poles are chosen on a contour 
$\Gamma$ in the complex plane that contains the eigenvalues along the negative real axis.
This approach is inspired by the link between
contour integration methods \cite{Sakurai2003,Polizzi2009} and rational
filtering techniques \cite{VanBeeumen2017,VanBarel2016}.
In \Cref{sec:subspaceiter} we explain in full detail how introducing and swapping poles
implicitly applies a rational filter.

The poles are chosen on an elliptical contour $\Gamma = e(c,r_x,r_y,\theta)$, 
where $c$ is the center of the ellipse, $r_x$ is the radius in $x$-direction, $r_y$ is the
radius in the $y$-direction, and $\theta$ is the angle over which the axes of the ellipse are rotated.
For the smaller problem, $\Gamma$ is selected as $e(-1.3,1.5,3,0)$ and discretized in $120$ nodes.
For the larger problem, $\Gamma = e(-25,27,6,0)$ and it is discretized in $400$ nodes.
These nodes are the poles  introduced during the reduction to Hessenberg form. The aim is
to get the pair improper, enforcing thereby a middle deflation separating the two
regions. In case of a \emph{middle} deflation we continue introducing poles on the separated parts.

The results are presented in \Cref{fig:numexp11,fig:numexp12,fig:numexp13}.
\Cref{fig:numexp11} shows an overview of the spectrum of both matrix pairs.
The two regions of eigenvalues are indicated with different markers.
The box in \Cref{fig:numexp11} marks the area in which
\Cref{fig:numexp12} will zoom in; it shows where the regions meet in detail,
together with the poles of the Hessenberg pair.  

%This shows that the separation inside and outside of the contour is
%effective.  
\Cref{fig:numexp13} displays the magnitude of the subdiagonal elements
$|a_{i+1,i}| + |b_{i+1,i}|$.  All poles which are considered numerically zero and thus
lead to a deflation are emphasized in a
shaded rectangle.
Typically some of the first and last poles are deflated, but more important is the
presence of
interior deflations.  This happens at poles $103$ to $106$ after $160$ poles
have been introduced in the pair of size $416$.  For the larger pair, poles $317$ to $321$
are deflated after $621$ poles have been introduced.
The eigenvalues outside $\Gamma$ are located in the top left part of the Hessenberg pair,
those inside $\Gamma$ appear after the interior deflation.

\begin{figure}[htp]
\centering
%\definecolor{clust1color}{rgb}{0.11,0.49,0.41}%
%\definecolor{clust2color}{rgb}{0.2578,0.4844,0.9531}%
%\definecolor{polecolor}{rgb}{0.3281,0.3281,0.3281}%

\definecolor{clust1color}{gray}{0.5}
\definecolor{clust2color}{gray}{0.25}
\definecolor{polecolor}{gray}{0.37}

\begin{tikzpicture}

%%%%%%%%%%%%%%%%%%%%%%%%%%%%%%%%%%%%%%
%%% Problem 1: 416 %%%%%%%%%%%%%%%%%%%
%%%%%%%%%%%%%%%%%%%%%%%%%%%%%%%%%%%%%%

\pgfplotstableread{fig/numexp1/data416/poles416.dat}
 {\poletable} 

\pgfplotstableread{fig/numexp1/data416/C1416.dat}
 {\clusteronetable}
 
\pgfplotstableread{fig/numexp1/data416/C2416.dat}
 {\clustertwotable}

% eigenvalue clusters and poles in Hessenberg pair:
% extended view
%\node[] at (-0.75cm,4cm) {I.};
\begin{axis}[%
xlabel=$\operatorname{Re}(z)$,
ylabel=$\operatorname{Im}(z)$,
xmin=-3,
xmax=0.5,
ymin=-250,
ymax=250,
at={(0cm,0cm)},
width=6.5cm,
height=5.5cm,
ylabel shift = -0.5 cm,
xlabel shift = -0.15 cm
]

\addplot [only marks, mark=x, mark options={solid, color=clust1color, scale=0.8}] table[x index=0, y index=1] from \clusteronetable ; \label{line:ne1clusterone}

\addplot [only marks, mark=star, mark options={solid, color=clust2color, scale=0.8}] table[x index=0, y index=1] from \clustertwotable ; \label{line:ne1clustertwo}

%\addplot [only marks, mark=+, mark options={solid, color=polecolor, scale=0.6}] table[x index=0, y index=1] from \poletable ; \label{line:ne1poles}

\draw [red, thick] (axis cs:-0.75,-10) rectangle (axis cs:0.25,10);

\end{axis}

%%%%%%%%%%%%%%%%%%%%%%%%%%%%%%%%%%%%%%
%%% Problem 2: 1280 %%%%%%%%%%%%%%%%%%
%%%%%%%%%%%%%%%%%%%%%%%%%%%%%%%%%%%%%%
\def\yos{0cm}
\def\xos{6.5cm}
% read standalone

\pgfplotstableread{fig/numexp1/data1280/poles1280.dat}
 {\poletable} 

\pgfplotstableread{fig/numexp1/data1280/C11280.dat}
 {\clusteronetable}
 
\pgfplotstableread{fig/numexp1/data1280/C21280.dat}
 {\clustertwotable}
 
% eigenvalue clusters and poles in Hessenberg pair:
% extended view
%\node[] at (-0.75cm+\xos,4cm-\yos) {II.};
\begin{axis}[%
xlabel=$\operatorname{Re}(z)$,
ylabel=$\operatorname{Im}(z)$,
xmin=-55,
xmax=10,
restrict x to domain=-56:11,
ymin=-570,
ymax=570,
restrict y to domain=-575:575,
at={(0cm+\xos,0cm-\yos)},
width=6.5cm,
height=5.5cm,
ylabel shift = -0.5 cm,
xlabel shift = -0.15 cm
]

\draw [red, thick] (axis cs:-2,-10) rectangle (axis cs:1.2,10);

\addplot [only marks, mark=x, mark options={solid, color=clust1color, scale=0.5}] table[x index=0, y index=1] from \clusteronetable ;

\addplot [only marks, mark=star, mark options={solid, color=clust2color, scale=0.5}] table[x index=0, y index=1] from \clustertwotable ; 

%\addplot [only marks, mark=+, mark options={solid, color=polecolor, scale=0.4}] table[x index=0, y index=1] from \poletable ;

\end{axis}

\end{tikzpicture}
\caption{Eigenvalues in region 1 (\protect\scalebox{1.4}{\ref{line:ne1clusterone}}; bow-shape) and region 2
(\protect\scalebox{1.4}{\ref{line:ne1clustertwo}}; close to the real axis).
On the left we have the problem of size 416 and on the right 1280.}
\label{fig:numexp11}
\end{figure}
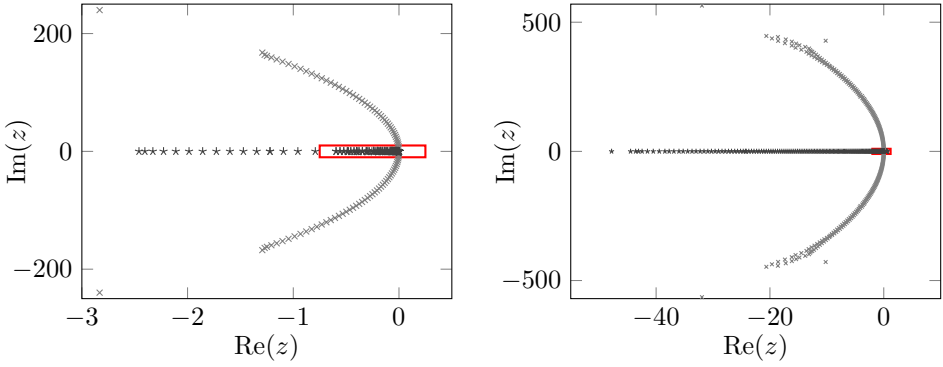

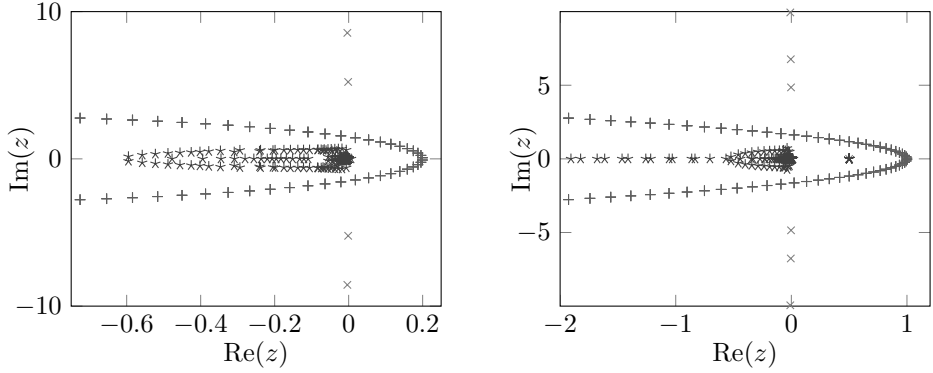
\begin{figure}[htp]
\centering
%\definecolor{clust1color}{rgb}{0.11,0.49,0.41}%
%\definecolor{clust2color}{rgb}{0.2578,0.4844,0.9531}%
%\definecolor{polecolor}{rgb}{0.3281,0.3281,0.3281}%

\definecolor{clust1color}{gray}{0.5}
\definecolor{clust2color}{gray}{0.25}
\definecolor{polecolor}{gray}{0.37}

\begin{tikzpicture}

%%%%%%%%%%%%%%%%%%%%%%%%%%%%%%%%%%%%%%
%%% Problem 1: 416 %%%%%%%%%%%%%%%%%%%
%%%%%%%%%%%%%%%%%%%%%%%%%%%%%%%%%%%%%%

\pgfplotstableread{fig/numexp1/data416/poles416.dat}
 {\poletable} 

\pgfplotstableread{fig/numexp1/data416/C1416.dat}
 {\clusteronetable}
 
\pgfplotstableread{fig/numexp1/data416/C2416.dat}
 {\clustertwotable}

% eigenvalue clusters and poles in Hessenberg pair:
% rectangle
%\node[] at (-0.75cm,4cm) {I.};
\begin{axis}[%
xlabel=$\operatorname{Re}(z)$,
ylabel=$\operatorname{Im}(z)$,
xmin=-0.75,
xmax=0.25,
restrict x to domain=-0.8:0.3,
ymin=-10,
ymax=10,
restrict y to domain=-11:11,
at={(0cm,0cm)},
width=6.5cm,
height=5.5cm,
ylabel shift = -0.5 cm,
xlabel shift = -0.15 cm,
ytick={-10,0,10}
]

\addplot [only marks, mark=x, mark options={solid, color=clust1color, scale=1}] table[x index=0, y index=1] from \clusteronetable ; 

\addplot [only marks, mark=star, mark options={solid, color=clust2color, scale=1}] table[x index=0, y index=1] from \clustertwotable ;

\addplot [only marks, mark=+, mark options={solid, color=polecolor, scale=1}] table[x index=0, y index=1] from \poletable ; \label{line:ne1poles}

\end{axis}

%%%%%%%%%%%%%%%%%%%%%%%%%%%%%%%%%%%%%%
%%% Problem 2: 1280 %%%%%%%%%%%%%%%%%%
%%%%%%%%%%%%%%%%%%%%%%%%%%%%%%%%%%%%%%
\def\yos{0cm}
\def\xos{6.5cm}
% read standalone

\pgfplotstableread{fig/numexp1/data1280/poles1280.dat}
 {\poletable} 

\pgfplotstableread{fig/numexp1/data1280/C11280.dat}
 {\clusteronetable}
 
\pgfplotstableread{fig/numexp1/data1280/C21280.dat}
 {\clustertwotable}
 
% eigenvalue clusters and poles in Hessenberg pair:
% rectangle
%\node[] at (-0.75cm+\xos,4cm-\yos) {II.};
\begin{axis}[%
xlabel=$\operatorname{Re}(z)$,
ylabel=$\operatorname{Im}(z)$,
restrict x to domain=-2.2:1.4,
xmin=-2,
xmax=1.2,
restrict y to domain=-11:11,
ymin=-10,
ymax=10,
at={(0cm+\xos,0cm-\yos)},
width=6.5cm,
height=5.5cm,
ylabel shift = -0.5 cm,
xlabel shift = -0.15 cm,
ytick={-5,0,5}
]

\addplot [only marks, mark=x, mark options={solid, color=clust1color, scale=1}] table[x index=0, y index=1] from \clusteronetable ;

\addplot [only marks, mark=star, mark options={solid, color=clust2color, scale=1}] table[x index=0, y index=1] from \clustertwotable ;

\addplot [only marks, mark=+, mark options={solid, color=polecolor, scale=1}] table[x index=0, y index=1] from \poletable ;

\end{axis}

\end{tikzpicture}
\caption{Close-up of the central part where the regions meet for the problem of size
  416 and 1280. The legend is identical to the one of \Cref{fig:numexp11} extended with
  the poles (\ref{line:ne1poles}; on the ellipse around the real axis) .}
\label{fig:numexp12}
\end{figure}

\begin{figure}[htp]
\centering
\includegraphics[scale=0.97]{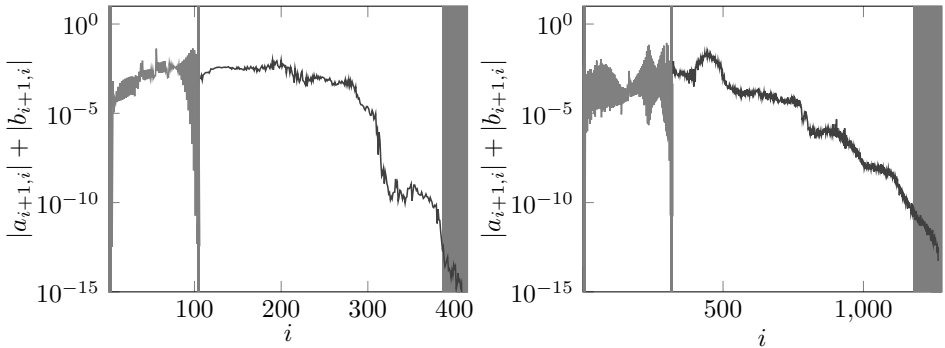}
\caption{Magnitudes of the subdiagonal elements in the matrix pair after the Hessenberg reduction
for the problem of size 416 (left) and 1280 (right).}
\label{fig:numexp13}
\end{figure}

This numerical experiment shows that deflating subspaces containing regions of
eigenvalues can be found already during the reduction to Hessenberg form.  We like to
stress that deflation is obtained without any of the poles converging towards an
eigenvalue, but by choosing poles on a contour such that they construct an effective
rational filter.

%%%%%%%%%%%%%%%%%%%%%%%%%%%%%%%%%%%%%%%%%%%%%%%%%%%%%%%%%%%%%%%%%%%%%
%% SECTION: SINGLE SHIFTED RATIONAL QZ
%%%%%%%%%%%%%%%%%%%%%%%%%%%%%%%%%%%%%%%%%%%%%%%%%%%%%%%%%%%%%%%%%%%%%
\section{Implicitly single shifted rational QZ step.}
\label{sec:rqz}
In this section we present the implicit RQZ step for a Hessenberg pair. At the end of this
section numerical experiments are included to illustrate the performance and accuracy of the
algorithm.

The algorithm operates on proper Hessenberg pairs. These pairs could be the result of the
reduction procedure presented in \Cref{sec:red} or they could be given directly,
e.g., as coming from an iterative rational Krylov method, where one would like to compute
the eigenvalues of the projected Hessenberg pair. These eigenvalues are
approximations to the eigenvalues of the original problem and are called the
Ritz values if the final pole is at $\infty$ or Harmonic Ritz values for a final pole at $0$ \cite{DeSamblanx1999, Ruhe1998a}.

%%%%%%%%%%%%%%%%%%%%%%%%%%%%%%%%%%%%%%%%%%%%%%%%%%%%%%%%%%%%%%%%%%%%%
%% SSEC: IMPL RQZ OF DEGREE 1
%%%%%%%%%%%%%%%%%%%%%%%%%%%%%%%%%%%%%%%%%%%%%%%%%%%%%%%%%%%%%%%%%%%%%
\subsection{The algorithm.}
\label{subsec:implRQZ}

Before describing the algorithm we like to comment on the nomenclature.  We use
both the terms \emph{poles} $\xi$ and \emph{shifts} $\varrho$ to refer to elements on the
subdiagonal of a Hessenberg pair. In fact our shifts are poles as well, but we 
%
%The distinction is subtle and slightly vague. We
typically consider poles as subdiagonal elements that are sustained in the Hessenberg
pair, while shifts are introduced and removed in a single implicit RQZ step. A shift is pushed in
at the top, chased to the bottom, and pulled out at the end.

We introduce the RQZ procedure with an example.  
Given a
$5{\times}5$ Hessenberg pair $(A,B)$ with poles $\xi_1 = $
\circled{1}{/}\circledletter{a}, $\xi_2 = $ \circled{2}{/}\circledletter{b}, $\xi_3 =
$ \circled{3}{/}\circledletter{c}, $\xi_4 = $ \circled{4}{/}\circledletter{d} $\in
\bar{\CC}$.
The RQZ step consists out of three stages, similar to all algorithms of implicit QR type. These are an initialization,
a chasing, and a finalization phase. 
% The three phases of an implicitly single shifted rational QZ step proceed as follows.

\paragraph{Initialization.}
Suppose we are given a shift
$\varrho = \oplus / \oslash \in \bar{\CC}$,
%$\varrho = $\circled{$\rho$}{/}\circledletter{r} $\in \bar{\CC}$,
for instance the Wilkinson shift. Pane I in \cref{fig:implRQZ} shows the
Hessenberg pair in its initial state.  The shift\footnote{A shift equal to a pole will not
  result in a breakdown, but leads to slow or no convergence at all (see 
  \Cref{sec:subspaceiter}). In practice shifts should be taken different from the poles.}  is introduced in pane II by changing the first pole with a transformation
$Q_1$.

\paragraph{Chasing.}
Panes III-V show how the shift is relocated from the first position on the subdiagonal to
position $n{-}1$ by repeatedly swapping it with the poles of the Hessenberg pair. The shift is
chased to the bottom.
The matrix elements that are changed in every step are marked with an $\otimes$. 

During this procedure the shift will move from the top left to the bottom right and all
poles will move up one position to the top-left corner.
The assumption that the shift differs from the poles $\varrho\neq\xi_i$,
for all $i$, ensures that none of
 the swapping operations equals an identity, otherwise the downward movement of the shift
 will undo the upward movement of the corresponding pole.

\paragraph{Finalization.}
Finally, in pane VI, one last operation can be performed where we have the possibility to
remove the shift $\varrho$ and introduce any new pole $\hat{\xi}_4 = $ 
\circled{5}{/}\circledletter{e} $\in \bar{\CC}$, via the procedure described in
\Cref{subsec:manipulate}.

\begin{figure}[htp]
\label{fig:implRQZ}
\centering
\input{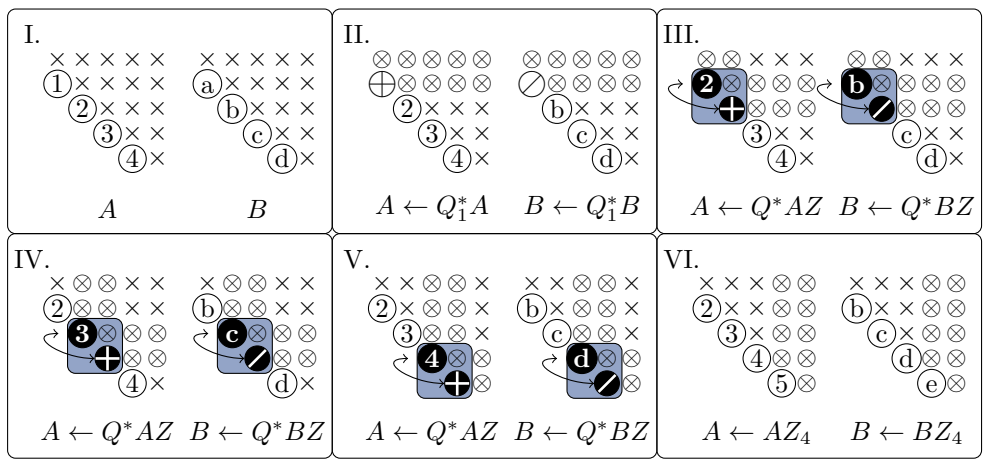}
\caption{Single shifted implicit RQZ step on a $5{\times}5$ Hessenberg pair with shift $\varrho$.}
\end{figure}

The reader familiar with the classical QZ algorithm \cite{Moler1973} or the
condensed QZ algorithm \cite{Vandebril2013} can verify that the algorithm described here 
generalizes both methods. 
In the QZ algorithm \cite{b333} one chases a bulge and in the final step the new pole was
always put to $\infty$ thereby restoring the upper triangular form of $B$.
In the condensed QZ algorithm \cite{Vandebril2013} a rotation was chased and in the final
step one allowed for a pole to be at $0$ or $\infty$. 

In the rational QZ algorithm we chase a shift instead of a bulge or a rotation. However, the
shift is encoded in the rotation and bulge as well, as it is found as an eigenvalue of
Watkins' bulge pencil \cite[Section 5]{Watkins2000}, \cite{b333}; the other eigenvalue in the bulge pencil
is $\infty$. If we consider the same bulge pencil in the rational QZ case we see that the
eigenvalue at $\infty$ is replaced by a pole of the pencil.
Moreover, also the pole swapping technique is nothing else than the bulge exchange
interpretation of Watkins \cite{p509}. 

% The bulge pencil in
% Watkins' approach has an infinite eigenvalue (this corresponds to the QZ algorithm), in this
% more general setting the infinite eigenvalue is replaced by a pole.

%. Here we chase the shift, which is in fact equivalent.

% In the classical QZ algorithm \cite{Moler1973} there was no freedom, the final pole 
% The generalization of
% Vandebril \& Watkins \cite{Vandebril2012} 
% Here we can place the pole anywhere in the extended complex plane $\CC$.

\subsection{Shifts, poles, and deflation.}

In order to implement the RQZ algorithm and in particular a single RQZ step, we need good
strategies to select the shift, the new pole introduced at the very end, and a
procedure to check if there are deflations. 

For the shifts we typically take the Wilkinson
shift. This is the eigenvalue of the trailing $2{\times}2$ block that
is closest to $a_{nn}/b_{nn}$.
For the poles there are several options: one could as well consider a Wilkinson
strategy determined by the $2{\times}2$ block in the upper-left corner or one could use other
techniques such as poles on a contour to do filtering, see, e.g., 
\Cref{subsec:example1}.  Optimal pole selection is a difficult issue and very
problem specific, this is beyond the scope of this manuscript; in the numerical
experiments we will test some straightforward options.

The deflation criterion for the poles $\xi_2, \hdots, \xi_{n-2}$ is obvious.  If one  of
these is not in $\bar{\CC}$, 
%which means we have $0/0$, 
the problem can be split into
smaller, independent problems. This means in fact that for a certain $i$, two subdiagonal
elements $a_{i+1,i}$ and $b_{i+1,i}$ are simultaneously zero.  To numerically check this
we use the classical relative criterion taking the sizes of the neighbouring elements into
consideration \cite{b037}, i.e., $|a_{i+1,i}| \leq c \epsilon_m (|a_{i,i}|+|a_{i+1,i+1}|)$ and $|b_{i+1,i}| \leq c \epsilon_m (|b_{i,i}|+|b_{i+1,i+1}|)$,
with $\epsilon_m$ the machine precision and $c$ a small constant.

%XXXXXXXXXXXXXX
%DAAN, kan je de formulet geven ?
%XXXXXXXXXXXXX
% XXXXx DAAN, which relative
% criterion do you use to check these for being zero? This is important, we have to discuss
% this XXXXX

The situation at the top or at the bottom, which are the exterior poles $\xi_1$ and
$\xi_{n-1}$ is more peculiar.  Whereas the interior poles are fixed, the exterior ones can
be altered. Instead of changing $\xi_1$ or $\xi_{n-1}$ to another pole, we would like to
know whether it is possible to move them outside of $\bar{\CC}$: we would like to
deflate an eigenvalue.
%introduce a deflation. 
To this end we need to create two zeros with a single operation
such that the pair is no longer proper. We discuss the situation at the bottom right, the
top-left corner proceeds similar. Suppose % our Hessenberg pair is $(A,B)$, and 
we
would like to introduce zeros in the penultimate subdiagonal positions, which are
$a_{n,n-1}$ and $b_{n,n-1}$. 
%XXXXXXXXXXXX
% At $\xi_1$ or $\xi_{n-1}$ the problem can be deflated if there exists a core
% transformation $C_1$ or $C_
% {n-1}$ that can move $\xi_1$ or $\xi_{n-1}$ outside . 
This is possible if the matrix $\bigl[ \begin{smallmatrix}a_{n,n-1} & a_{nn}\\ b_{n,n-1} &
  b_{nn}\end{smallmatrix}\bigr]$ is of rank $1$. If this is the case we can simultaneously
annihilate the subdiagonal elements by operating on the columns of $(A,B)$.
In our experiments we assume the matrix to be of rank 1 if the smallest singular value
is less than $\epsilon_m$

% If one wishes to introduce zeros in the first subdiagonal elements of $(A,B)$ 
%  the matrix
% $\bigl[ \begin{smallmatrix}a_{11} & a_{21}\\ b_{11} & b_{21}\end{smallmatrix}\bigr]$
% has to be of rank $1$ and a transformation acting on the first two rows can complete the
% task.
% %
% % core transformation can simultaneously  create a zero of elements $a_{21}, b_{21}$, or $a_{n-1,n}, b_{n-1,n}$. 
% XXXX DAAN, we need to know the criteria you use for checking for deflation XXXX

\subsection{Numerical experiment.}
\label{subsec:example2}
We apply the RQZ method on two sets of problems: random matrix pairs and two problems from
fluid dynamics. We are interested in the accuracy and speed.

\paragraph{Random matrix pairs.}
We test the single shift RQZ algorithm on $9$ randomly generated, complex-valued
matrix pairs with sizes ranging from $100$ to $1000$. The results are averaged
over $10$ runs.
The pairs are first reduced to Hessenberg pairs with all poles at infinity, implying
 that no additional computational work has been done compared to the reduction phase of the QZ method.
%These Hessenberg pairs are used in the RQZ iteration. 
The shift is always taken as the Wilkinson shift. % computed from the lower $2 \times 2$ block.
The poles are selected according to four different strategies: 
poles at infinity, poles at zero, random poles, and poles chosen as the Wilkinson shift from the upper-left $2 \times 2$ block.
The last choice is called the Wilkinson pole.
 
The results are summarized in \cref{fig:numexp2,fig:numexp2a}. The left pane of \cref{fig:numexp2} shows the relative backward errors 
$\|\hat{A} - Q^* A Z\|_2/\|A\|_2$ and $\|\hat{B} - Q^* B Z\|_2/\|B\|_2$ for the reduction
to a Hessenberg pair 
(lines without markers) and the backward error on the Schur form for the four different pole strategies.
The backward error is small in all cases.
The right pane shows the average number of iterations per eigenvalue.
Clearly, the Wilkinson pole requires the least number of iterations per eigenvalue. It
requires on average 
$1.5\%$ less iterations than the classic choice of poles at infinity. Random poles and poles
at zero perform the worst.
% The largest reduction in number of iterations of $5.8\%$ is observed for the problem of size $562$.
% The random poles perform worst. They require on average $12.5\%$ more iterations than poles at infinity, 
% while poles at zero require $6.7\%$ more iterations compared to poles at infinity.

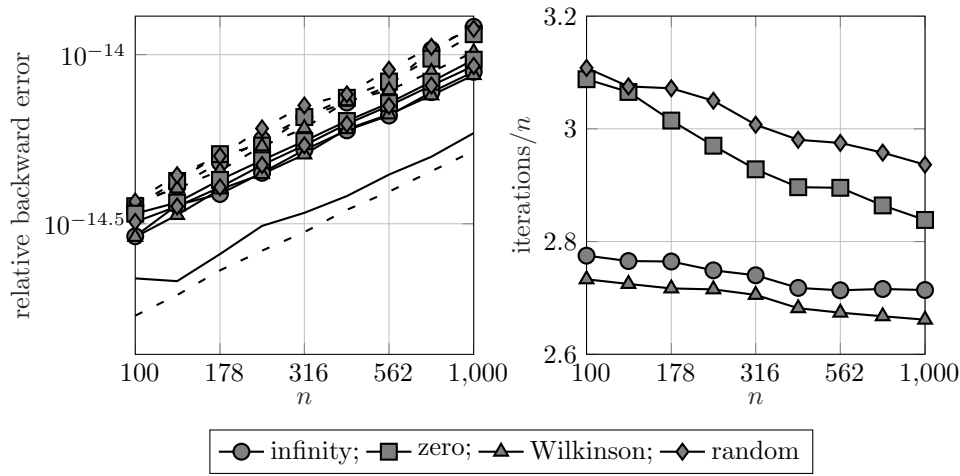
\begin{figure}[htp]
\centering
\begin{tikzpicture}
\pgfplotsset{
  log x ticks with fixed point/.style={
      xticklabel={
        \pgfkeys{/pgf/fpu=true}
        \pgfmathparse{exp(\tick)}%
        \pgfmathprintnumber[fixed relative, precision=3]{\pgfmathresult}
        \pgfkeys{/pgf/fpu=false}
      }
  }}
  
% Pane I : Backward error.
%\node[] at (-0.5cm,4.5cm) {I.};
\begin{axis}[%
at={(0cm,0cm)},
width=4.5cm,
height=4.5cm,
ylabel shift = -0.05 cm,
xlabel shift = -0.15 cm,
scale only axis,
xmode=log,
xmin=100,
xmax=1000,
log x ticks with fixed point,
xtick={100, 178, 316, 562, 1000},
xlabel style={font=\color{white!15!black}},
xlabel={$n$},
ymode=log,
ymin=1.3e-15,
ymax=1.3e-14,
yminorticks=true,
ylabel style={font=\color{white!15!black}},
ylabel={relative backward error},
axis background/.style={fill=white},
every axis plot/.append style={thick},
grid=both,
grid style={line width=.1pt, draw=gray!10},
major grid style={line width=.2pt,draw=gray!50},
]
%%%%%%%%%%%%%
%% A
%%%%%%%%%%%%%
% reduction
\addplot [color=black, loosely dashed]
  table[row sep=crcr]{%
100	1.69182623745406e-15\\
133	1.94708732061205e-15\\
178	2.30329538391719e-15\\
237	2.6336946740237e-15\\
316	2.98742501803995e-15\\
422	3.47826932939142e-15\\
562	3.93553297304772e-15\\
750	4.52398508853689e-15\\
1000	5.18686447532642e-15\\
};
% classic
\addplot [color=black, loosely dashed, mark=*, mark options={solid, scale=1.5, fill=gray}]
  table[row sep=crcr]{%
100	3.58091813625549e-15\\
133	4.075236954816e-15\\
178	4.79584385893824e-15\\
237	5.61278252564157e-15\\
316	6.53570093801717e-15\\
422	7.2181799871265e-15\\
562	8.07275499649437e-15\\
750	1.03458142858844e-14\\
1000	1.20897140646376e-14\\
};
% zero
\addplot [color=black, loosely dashed, mark=square*, mark options={solid, scale=1.5, fill=gray}]
  table[row sep=crcr]{%
100	3.56671001521559e-15\\
133	4.23227212349994e-15\\
178	5.08007940389804e-15\\
237	5.38507042845049e-15\\
316	6.54214113881744e-15\\
422	7.45680313556613e-15\\
562	8.33231710835346e-15\\
750	9.73315841886887e-15\\
1000	1.1495962380639e-14\\
};
% Wilkinson
\addplot [color=black, loosely dashed, mark=triangle*, mark options={solid, scale=1.5, fill=gray}]
  table[row sep=crcr]{%
100	3.66309554491701e-15\\
133	4.02055081638256e-15\\
178	4.54490992137233e-15\\
237	5.35782205933145e-15\\
316	6.0634212733972e-15\\
422	7.28518669378866e-15\\
562	7.85518130928602e-15\\
750	8.89514328372688e-15\\
1000	1.0172788125871e-14\\
};
% random
\addplot [color=black, loosely dashed, mark=diamond*, mark options={solid, scale=1.5, fill=gray}]
  table[row sep=crcr]{%
100	3.68100177094935e-15\\
133	4.40267868108885e-15\\
178	5.0163030057257e-15\\
237	6.05307882980362e-15\\
316	7.07631787080278e-15\\
422	7.63427161497591e-15\\
562	9.02685028955085e-15\\
750	1.05487619610497e-14\\
1000	1.19345342191215e-14\\
};
%%%%%%%%%%%%%
%% B
%%%%%%%%%%%%%
% reduction
\addplot [color=black]
  table[row sep=crcr]{%
100	2.18077089256397e-15\\
133	2.1398517261843e-15\\
178	2.57266676252797e-15\\
237	3.11739956824322e-15\\
316	3.40738933741685e-15\\
422	3.81913716150156e-15\\
562	4.41941560267269e-15\\
750	4.99231362476261e-15\\
1000	5.864203618555e-15\\
};

% classic
\addplot [color=black, mark=*, mark options={solid, scale=1.5, fill=gray}]
  table[row sep=crcr]{%
100	2.90447248087585e-15\\
133	3.59246758726589e-15\\
178	3.87394515851869e-15\\
237	4.47477162634507e-15\\
316	5.22777877349495e-15\\
422	5.9743048988425e-15\\
562	6.61001899612315e-15\\
750	7.7494029046912e-15\\
1000	8.89808106034398e-15\\
};

% zero
\addplot [color=black, mark=square*, mark options={solid, scale=1.5, fill=gray}]
  table[row sep=crcr]{%
100	3.38849972269807e-15\\
133	3.65503826649386e-15\\
178	4.26295412577156e-15\\
237	4.87438877346261e-15\\
316	5.55321082956008e-15\\
422	6.36693449211746e-15\\
562	7.19332801034147e-15\\
750	8.31976833097726e-15\\
1000	9.66169407307484e-15\\
};

% Wilkinson
\addplot [color=black, mark=triangle*, mark options={solid, scale=1.5, fill=gray}]
  table[row sep=crcr]{%
100	2.89187782061909e-15\\
133	3.35768604291459e-15\\
178	3.99818109553203e-15\\
237	4.43445225620473e-15\\
316	5.06905043977271e-15\\
422	6.02369157099309e-15\\
562	6.65794907961521e-15\\
750	7.58020628059224e-15\\
1000	8.70322305418245e-15\\
};

%random
\addplot [color=black, mark=diamond*, mark options={solid, scale=1.5, fill=gray}]
  table[row sep=crcr]{%
100	3.20245461945953e-15\\
133	3.55907683643352e-15\\
178	4.05458243940221e-15\\
237	4.71792717872386e-15\\
316	5.39354334422341e-15\\
422	6.23820024279332e-15\\
562	7.05854761410872e-15\\
750	8.10451159121338e-15\\
1000	9.26156132445666e-15\\
};

\end{axis}
%

% Pane 2: Iterations per eigenvalue
%\node[] at (5.5cm,4.5cm) {II.};
\begin{axis}[%
at={(6cm,0cm)},
width=4.5cm,
height=4.5cm,
ylabel shift = -0.15cm,
xlabel shift = -0.15cm,
scale only axis,
xmode=log,
xmin=100,
xmax=1000,
xminorticks=true,
log x ticks with fixed point,
xtick={100, 178, 316, 562, 1000},
xlabel style={font=\color{white!15!black}},
xlabel={$n$},
ymin=2.6,
ymax=3.2,
ylabel style={font=\color{white!15!black}},
ylabel={iterations$/n$},
axis background/.style={fill=white},
every axis plot/.append style={thick},
legend columns=-1,
legend entries={infinity;,zero;,Wilkinson;,random},
legend style={at={(8cm,-1cm)},anchor=north},
grid=both,
grid style={line width=.1pt, draw=gray!10},
major grid style={line width=.2pt,draw=gray!50},
]

% classic
\addplot [color=black, mark=*, mark options={solid, scale=1.5, fill=gray}]
  table[row sep=crcr]{%
100	2.775000000000000\\
133	2.765413533834586\\
178	2.764606741573034\\
237	2.748945147679325\\
316	2.740189873417721\\
422	2.717772511848342\\
562	2.713523131672598\\
750	2.715866666666667\\
1000 2.714000000000000\\
};

% zero
\addplot [color=black, mark=square*, mark options={solid, scale=1.5, fill=gray}]
  table[row sep=crcr]{%
100	3.088000000000000\\
133	3.065413533834586\\
178	3.014606741573034\\
237	2.970042194092827\\
316	2.928164556962025\\
422	2.896445497630332\\
562	2.895373665480427\\
750	2.864133333333333\\
1000 2.838300000000000\\
};

% Wilkinson
\addplot [color=black, mark=triangle*, mark options={solid, scale=1.5, fill=gray}]
  table[row sep=crcr]{%
100	2.733000000000000\\
133	2.724812030075188\\
178	2.716853932584270\\
237	2.715189873417721\\
316	2.705379746835443\\
422	2.681753554502370\\
562	2.673843416370107\\
750	2.667466666666666\\
1000 2.661400000000000\\
};

%random
\addplot[color=black, mark=diamond*, mark options={solid, scale=1.5, fill=gray}]
  table[row sep=crcr]{%
100	3.108000000000000\\
133	3.075187969924812\\
178	3.071910112359551\\
237	3.049789029535865\\
316	3.007278481012658\\
422	2.980568720379147\\
562	2.974911032028470\\
750	2.957733333333334\\
1000 2.936400000000000\\
};

\end{axis}

\end{tikzpicture}%
\caption{On the left the relative backward errors related to the reduction to a Hessenberg
  pair (no markers) and to the Schur form (with markers) are demonstrated. The error on $A$ is represented with a
  dashed line and the error on $B$ with a full line. On the right we see the 
average number of iterations per eigenvalue for the four different pole strategies.}
\label{fig:numexp2}
\end{figure}

\Cref{fig:numexp2a} shows the total number of pole swaps scaled with $n^2$. 
The scaling factor is used since the number of pole swaps per iteration
is $O(n)$ and the number of iterations is also $O(n)$.
This measure of performance depends heavily on the positions were deflations occur
and as such gives a much better view on the algorithmic behavior.
The order of the four strategies remains the
same, but the gains with Wilkinson poles increase up to $4\%$. This signals the
occurrence of deflations at other spots than only in the lower-right corner as is
typically the case in the classical setting.

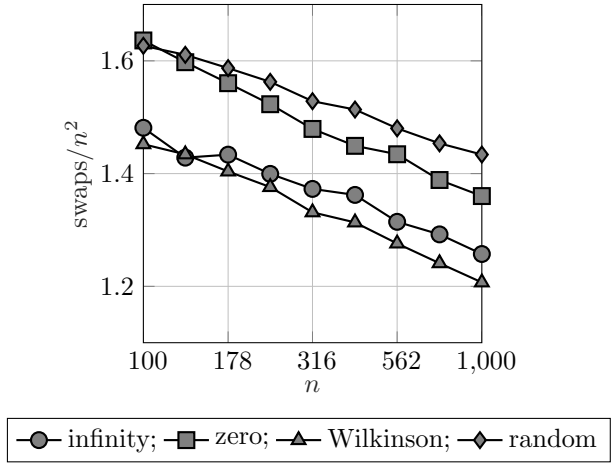
\begin{figure}[htp]
\centering
\begin{tikzpicture}
\pgfplotsset{
  log x ticks with fixed point/.style={
      xticklabel={
        \pgfkeys{/pgf/fpu=true}
        \pgfmathparse{exp(\tick)}%
        \pgfmathprintnumber[fixed relative, precision=3]{\pgfmathresult}
        \pgfkeys{/pgf/fpu=false}
      }
  }}

\begin{axis}[%
at={(0cm,0cm)},
width=4.5cm,
height=4.5cm,
ylabel shift = -0.15cm,
xlabel shift = -0.15cm,
scale only axis,
xmode=log,
xmin=100,
xmax=1000,
xminorticks=true,
log x ticks with fixed point,
xtick={100, 178, 316, 562, 1000},
xlabel style={font=\color{white!15!black}},
xlabel={$n$},
ymin=1.1,
ymax=1.7,
ylabel style={font=\color{white!15!black}},
ylabel={swaps$/n^2$},
axis background/.style={fill=white},
every axis plot/.append style={thick},
legend columns=-1,
legend entries={infinity;,zero;,Wilkinson;,random},
legend style={at={(11.2cm,-1cm)},anchor=north},
grid=both,
grid style={line width=.1pt, draw=gray!10},
major grid style={line width=.2pt,draw=gray!50},
]

% classic
\addplot [color=black, mark=*, mark options={solid, scale=1.5, fill=gray}]
  table[row sep=crcr]{%
100	1.481220000000000\\
133	1.428277460568715\\
178	1.433581618482515\\
237	1.399462336876213\\
316	1.372813851946804\\
422	1.362165607241527\\
562	1.314437190511772\\
750	1.292152533333334\\
1000 1.257348400000000\\
};

% zero
\addplot [color=black, mark=square*, mark options={solid, scale=1.5, fill=gray}]
  table[row sep=crcr]{%
100	1.636180000000000\\
133	1.597597376900899\\
178	1.560380633758364\\
237	1.523066104078762\\
316	1.479214068258292\\
422	1.449217784865569\\
562	1.434297944554907\\
750	1.388628088888889\\
1000 1.360143000000000\\
};

% Wilkinson
\addplot [color=black, mark=triangle*, mark options={solid, scale=1.5, fill=gray}]
  table[row sep=crcr]{%
100	1.452070000000000\\
133 1.434128554468879\\
178	1.403758995076379\\
237	1.376097135430576\\
316	1.331296066335523\\
422	1.313348195233710\\
562	1.276001127138714\\
750	1.240970488888889\\
1000 1.206960000000000\\
};

%random
\addplot[color=black, mark=diamond*, mark options={solid, scale=1.5, fill=gray}]
  table[row sep=crcr]{%
100	1.626670000000000\\
133	1.610701565944938\\
178	1.587091276353996\\
237	1.562753476116719\\
316	1.528432943438552\\
422	1.513609869499787\\
562	1.480282354580109\\
750	1.453626666666667\\
1000 1.433762000000000\\
};

\end{axis}  
  
\end{tikzpicture}
\caption{The total number of swaps scaled by $n^2$ for the computation of the Schur form for the four different pole strategies.}
\label{fig:numexp2a}
\end{figure}

\paragraph{IFISS problems.}
In this experiment we apply the RQZ method on two problems from fluid dynamics generated with IFISS \cite{Elman07,Elman14}.
%Both problems stem from fluid dynamics. 
The first problem originates from a model for the flow in a unit-square
cavity, the second problem comes from a model for the flow around an obstacle. Both models are  discretized, resulting 
 in two real, generalized eigenvalue problems. The \emph{cavity flow} problem is of size $2467$, the \emph{obstacle flow} 
problem of size $2488$. We applied the single shift RQZ method after initial reduction to Hessenberg form with poles at infinity.
Wilkinson shifts are employed in all cases. We used poles at infinity and Wilkinson poles.
%, and poles selected from two
%contours in the complex plane, choosen to contain specific parts of the eigenvalues of the
%problems under consideration. 
The spectra of the matrices is shown in \Cref{fignumexp2b}.
%are shown in figure~\ref{fignumexp2b} together with the spectra.
% In both cases, $\Gamma_1$ contains a cluster of eigenvalues along the negative
% real axis and $\Gamma_2$ a cluster of eigenvalues close to the imaginary axis. The contours are discretized in $200$ nodes and used in the RQZ iteration.

\begin{figure}[htp]
\centering
\includegraphics[scale=0.8]{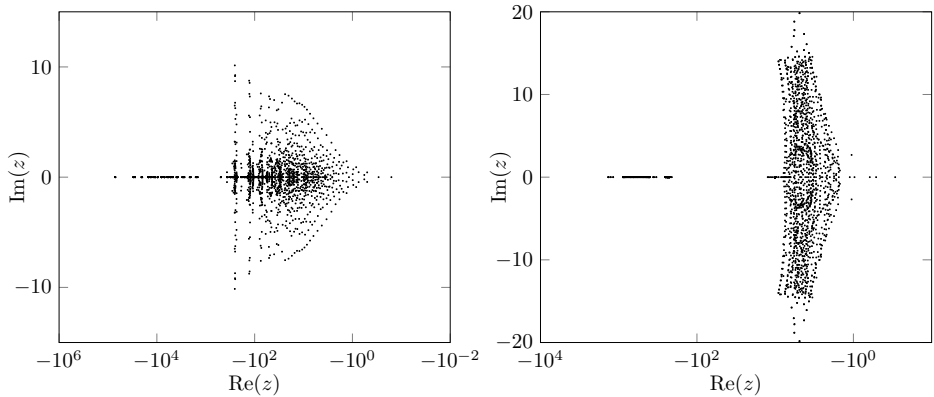}
\caption{On the left the spectrum of the cavity flow problem and on the right the spectrum
  of the obstacle flow problem are shown.
%with $\Gamma_{1,c}$, $\Gamma_{2,c}$. (II.) Spectrum of the obstacle flow problem with
%$\Gamma_{1,d}$, $\Gamma_{2,d}$.
}
%\caption{1}
\label{fignumexp2b}
\end{figure}  

The results of the experiment are summarized in \Cref{tab:numexp2b}.
It lists the relative backward error on the Schur form for both $A$ and $B$ for both
problems and the two pole strategies. The backward error is very good  in all cases.
The table also lists the average iterations per eigenvalue and how this
compares relatively to the result of poles at infinity.
We observe that the average number of swaps and iterations when employing
Wilkinson poles is always below the numbers generated by 
the classical approach.
\begin{table}[htp]
\centering
\caption{Results of the RQZ method on the IFISS problems. 
The first column lists the problem, the second column the pole strategy.
Columns 3 and 4 present the backward error on $A$ and $B$, columns 5 and
6 the average number of iterations and performance compared to QZ, and columns 7 and 8 the total number of swaps and the performance compared to QZ.} 
\scalebox{0.92}{
% iterations/ev
\begin{tabular}{c | l | l | l | l | l | l | l}
Problem & pole & error $A$ & error $B$ & it/$n$ & $\%$ & swaps/$n^2$ & $\%$ \\ \hline
\multirow{2}{*}{\emph{Cavity flow}}    & $\infty$  & $7.5 \cdot 10^{-15}$ & $4.4 \cdot 10^{-15}$ & $2.49$ & $100$  & $0.446$  & $100$ \\
                                       & Wilk.     & $7.8 \cdot 10^{-15}$ & $4.1 \cdot 10^{-15}$ & $2.34$ & $94.2$ & $0.443$  & $99.3$ \\ \hline
\multirow{2}{*}{\emph{Obstacle flow}}  & $\infty$  & $9.2 \cdot 10^{-15}$ & $7.8 \cdot 10^{-15}$ & $2.54$ & $100$  & $0.617$  & $100$ \\
                                       & Wilk.     & $8.8 \cdot 10^{-15}$ & $7.8 \cdot 10^{-15}$ & $2.36$ & $93.0$ & $0.595$ & $96.3$\\ \hline                                     
\end{tabular}

}
\label{tab:numexp2b}
\end{table}

%We conclude from this experiment that a good choice of poles can lead to more rapid convergence of the RQZ method compared to the QZ method.

\subsection{Tightly packed shifts.}
\label{sec:tightly_packed}

The single shifted RQZ method is, just like the classical QZ method sequential in nature
and not very cache efficient. To enhance cache performance one can go for
\emph{multishift} and chase $m$ shifts simultaneously or one can chase $m$ single shifts
as close as possible after each other. Since the theory in this manuscript is not suited for a multishift
setting we will confine ourselves to a description  and numerical experiment for \emph{tightly packed shifts}.

Assume we would like to chase $m$ tightly packed shifts, which are typically
the eigenvalues of the bottom-right $m{\times}m$ block of $(A,B)$.  
These shifts are introduced one after another in the Hessenberg pair. The first
shift is introduced and swapped down one row. Next the second shift is introduced and 
both shifts need to be swapped down a single row, starting with the lower-right one first.
%the first shift is swapped down a row, after which the second shift is swapped down a
%row. 
As a result there is space to introduce the third shift, and the procedure continues. After having introduced
the shifts, the first $m$ subdiagonal elements of the pair $(A,B)$ link to these shifts.

In order to chase the block of $m$ shifts 
one needs to swap all shifts down one row, starting again with the one in lower-right corner first.
In total there are $m$ equivalence transformations which should be
accumulated to update the necessary parts of the matrices in a cache efficient
manner.  

The finalization phase commences when the shifts occupy the last subdiagonal positions 
in the Hessenberg pair. We can now introduce $m$ new poles.
The first new pole is introduced in the final subdiagonal element and swapped up $m$
positions thereby swapping all remaining shifts down. The second new pole is now
introduced and this course of action continues until the new poles occupy the last $m$
subdiagonal elements.

We test the tightly packed RQZ method on randomly generated matrix pairs of size $1000$ that are first
reduced to Hessenberg pairs with poles at infinity.
% This again requires no additional computational work compared to the QZ method,
% and allows for a fair comparison.
We run the  RQZ method
%method is applied with element-wise deflation criterion
for shift batches of sizes $m = 2, 4, 8, 16, 32$.
The results are averaged over $2$ runs.
% The shifts are allows computed as the eigenvalues of the trailing $m{\times}m$ block
% with the single shift RQZ method.
The poles are selected following three criteria: always at infinity (classical QZ), 
$m$ times the Wilkinson pole of the leading $2{\times}2$ block, or as
as the eigenvalues of the leading $m{\times}m$ block, the \emph{Rayleigh} poles.

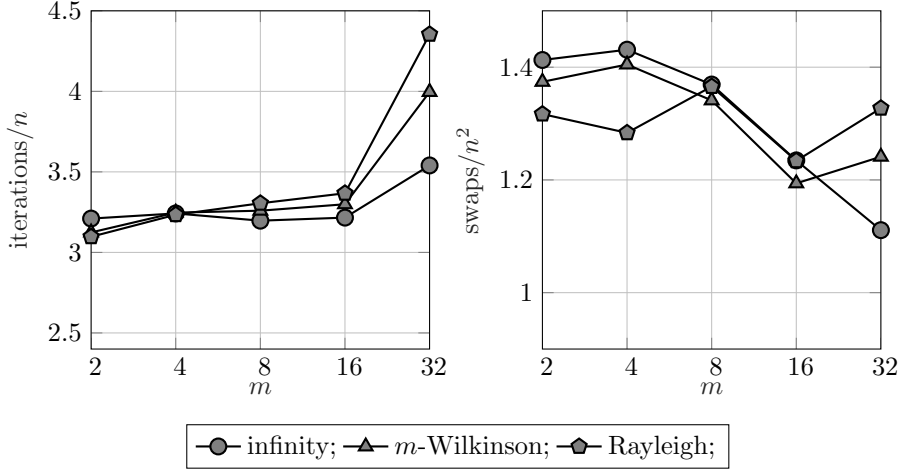
\begin{figure}[htp]
\centering
\begin{tikzpicture}

\pgfplotsset{
  log x ticks with fixed point/.style={
      xticklabel={
        \pgfkeys{/pgf/fpu=true}
        \pgfmathparse{exp(\tick)}%
        \pgfmathprintnumber[fixed relative, precision=3]{\pgfmathresult}
        \pgfkeys{/pgf/fpu=false}
      }
  }}

% Iterations per eigenvalue  
\begin{axis}[%
at={(0cm,0cm)},
width=4.5cm,
height=4.5cm,
ylabel shift = -0.05 cm,
xlabel shift = -0.15 cm,
scale only axis,
xmode=log,
xmin=2,
xmax=32,
log x ticks with fixed point,
xtick={2, 4, 8, 16, 32},
xlabel style={font=\color{white!15!black}},
xlabel={$m$},
ymin=2.4,
ymax=4.5,
yminorticks=true,
ylabel style={font=\color{white!15!black}},
ylabel={iterations$/n$},
axis background/.style={fill=white},
every axis plot/.append style={thick},
grid=both,
grid style={line width=.1pt, draw=gray!10},
major grid style={line width=.2pt,draw=gray!50},
]

% Infinity
\addplot [color=black, mark=*, mark options={solid, scale=1.5, fill=gray}]
  table[row sep=crcr]{%
32	3.54\\
16	3.216\\
8	3.197\\
4	3.242\\
2	3.21\\
};

% Wilkinson
\addplot [color=black, mark=triangle*, mark options={solid, scale=1.5, fill=gray}]
  table[row sep=crcr]{%
32	3.996\\
16	3.299\\
8	3.259\\
4	3.247\\
2	3.124\\
};

% Rayleigh
\addplot [color=black, mark=pentagon*, mark options={solid, scale=1.5, fill=gray}]
  table[row sep=crcr]{%
32	4.354\\
16	3.367\\
8	3.305\\
4	3.234\\
2	3.097\\
};

\end{axis}

% Swaps/n2
\begin{axis}[%
at={(6cm,0cm)},
width=4.5cm,
height=4.5cm,
ylabel shift = -0.05 cm,
xlabel shift = -0.15 cm,
scale only axis,
xmode=log,
xmin=2,
xmax=32,
log x ticks with fixed point,
xtick={2, 4, 8, 16, 32},
xlabel style={font=\color{white!15!black}},
xlabel={$m$},
ymin=0.9,
ymax=1.5,
yminorticks=true,
ylabel style={font=\color{white!15!black}},
ylabel={swaps$/n^2$},
axis background/.style={fill=white},
every axis plot/.append style={thick},
legend columns=-1,
legend entries={infinity;,$m$-Wilkinson;,Rayleigh;},
legend style={at={(0cm,-1cm)},anchor=north},
grid=both,
grid style={line width=.1pt, draw=gray!10},
major grid style={line width=.2pt,draw=gray!50},
]

% Infinity
\addplot [color=black, mark=*, mark options={solid, scale=1.5, fill=gray}]
  table[row sep=crcr]{%
32	1.110877\\
16	1.234656\\
8	1.369201\\
4	1.431037\\
2	1.41275\\
};

% Wilkinson
\addplot [color=black, mark=triangle*, mark options={solid, scale=1.5, fill=gray}]
  table[row sep=crcr]{%
32	1.241158\\
16	1.193942\\
8	1.341107\\
4	1.404774\\
2	1.374006\\
};

% Rayleigh
\addplot [color=black, mark=pentagon*, mark options={solid, scale=1.5, fill=gray}]
  table[row sep=crcr]{%
32	1.327183\\
16	1.233916\\
8	1.365697\\
4	1.283523\\
2	1.316812\\
};

\end{axis}

\end{tikzpicture}
\caption{On the left the average number of iterations per eigenvalue
is depicted in function of batch size $m$ for three different pole strategies.
%and problems of sizes 500 (\ref{line:ne3c500}), 1000 (\ref{line:ne3c1000}), 
%and 2000 (\ref{line:ne3c2000}). 
%Results linked to $A$ are displayed with a dashed line, for $B$ we used a full line. 
%The lines without markers show the error on the reduction to Hessenberg form. 
On the right the average number of swaps scaled with $n^2$
in function of the batch size $m$.
These results are for the random problem.
%for the four pole strategies and three problems. The dotted line indicates
%the expected speedup.
}
\label{fig:numexp31}
\end{figure}

\Cref{fig:numexp31} 
displays the performance in terms of the average number of iterations
per eigenvalue (left) and total number of swaps scaled with $n^2$ (right)
in function of the batch size $m$ for the three types of poles.
We observe that the number of iterations remains constant up to a batch size of $16$ but increases significantly for $m=32$.
This effect is most pronounced
with the Wilkinson and Rayleigh poles. Also in terms of the number of
swaps the poles at infinity are the most efficient choice for $m=32$.
We attribute this effect to the spectrum of the randomly generated problems.
All, except typically one,  of the eigenvalues are located in one cluster around zero. 
Likely, due to the increased batch size, 
some of the Wilkinson and Rayleigh poles will somehow be too close to each other, thereby
deteriorating the convergence.

%displays the relative backward error on the Schur form on the left 
%and 
%the average number of iterations per eigenvalue on the right,
%both are shown in function of the batch size $m$.
%We observe that the backward error grows slightly with the problem size and batch size,
%but remains small at all times.
%The increasing backward error is due to the increase in total operations.
%The iterations per eigenvalue decrease with $m$, the slope of the expected
%speedup matches the experimental results.
%This indicates that no shift blurring or other effects slow down the convergence
%of the tightly packed RQZ method.

Therefore, we have repeated this experiment with randomly generated matrix pairs of size
$1000$ having two equally sized clusters of eigenvalues centered around $0$ and $10$. The
results are shown in \Cref{fig:numexp32}.  Now the
Wilkinson and Rayleigh poles outperform the poles at infinity in terms of total number of
swaps for all batch sizes.

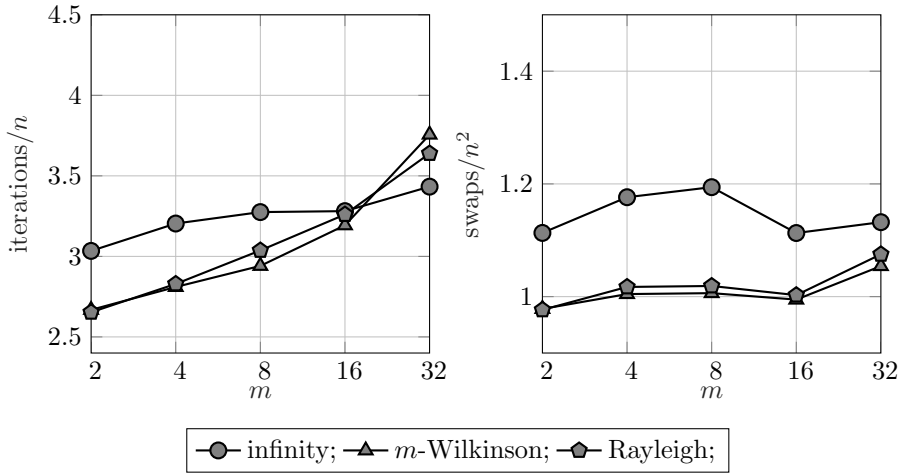
\begin{figure}[htp]
\centering
\begin{tikzpicture}

\pgfplotsset{
  log x ticks with fixed point/.style={
      xticklabel={
        \pgfkeys{/pgf/fpu=true}
        \pgfmathparse{exp(\tick)}%
        \pgfmathprintnumber[fixed relative, precision=3]{\pgfmathresult}
        \pgfkeys{/pgf/fpu=false}
      }
  }}

% Iterations per eigenvalue  
\begin{axis}[%
at={(0cm,0cm)},
width=4.5cm,
height=4.5cm,
ylabel shift = -0.05 cm,
xlabel shift = -0.15 cm,
scale only axis,
xmode=log,
xmin=2,
xmax=32,
log x ticks with fixed point,
xtick={2, 4, 8, 16, 32},
xlabel style={font=\color{white!15!black}},
xlabel={$m$},
ymin=2.4,
ymax=4.5,
yminorticks=true,
ylabel style={font=\color{white!15!black}},
ylabel={iterations$/n$},
axis background/.style={fill=white},
every axis plot/.append style={thick},
grid=both,
grid style={line width=.1pt, draw=gray!10},
major grid style={line width=.2pt,draw=gray!50},
]

% Infinity
\addplot [color=black, mark=*, mark options={solid, scale=1.5, fill=gray}]
  table[row sep=crcr]{%
32	3.433\\
16	3.281\\
8	3.275\\
4	3.204\\
2	3.034\\
};

% Wilkinson
\addplot [color=black, mark=triangle*, mark options={solid, scale=1.5, fill=gray}]
  table[row sep=crcr]{%
32	3.756\\
16	3.193\\
8	2.941\\
4	2.81\\
2	2.668\\
};

% Rayleigh
\addplot [color=black, mark=pentagon*, mark options={solid, scale=1.5, fill=gray}]
  table[row sep=crcr]{%
32	3.639\\
16	3.26\\
8	3.037\\
4	2.829\\
2	2.653\\
};

\end{axis}

% Swaps/n2
\begin{axis}[%
at={(6cm,0cm)},
width=4.5cm,
height=4.5cm,
ylabel shift = -0.05 cm,
xlabel shift = -0.15 cm,
scale only axis,
xmode=log,
xmin=2,
xmax=32,
log x ticks with fixed point,
xtick={2, 4, 8, 16, 32},
xlabel style={font=\color{white!15!black}},
xlabel={$m$},
ymin=0.9,
ymax=1.5,
yminorticks=true,
ylabel style={font=\color{white!15!black}},
ylabel={swaps$/n^2$},
axis background/.style={fill=white},
every axis plot/.append style={thick},
legend columns=-1,
legend entries={infinity;,$m$-Wilkinson;,Rayleigh;},
legend style={at={(0cm,-1cm)},anchor=north},
grid=both,
grid style={line width=.1pt, draw=gray!10},
major grid style={line width=.2pt,draw=gray!50},
]

% Infinity
\addplot [color=black, mark=*, mark options={solid, scale=1.5, fill=gray}]
  table[row sep=crcr]{%
32	1.132111\\
16	1.112874\\
8	1.194212\\
4	1.176379\\
2	1.113045\\
};

% Wilkinson
\addplot [color=black, mark=triangle*, mark options={solid, scale=1.5, fill=gray}]
  table[row sep=crcr]{%
32	1.054211\\
16	0.994758\\
8	1.006218\\
4	1.004564\\
2	0.978231\\
};

% Rayleigh
\addplot [color=black, mark=pentagon*, mark options={solid, scale=1.5, fill=gray}]
  table[row sep=crcr]{%
32	1.075103\\
16	1.002701\\
8	1.018934\\
4	1.01721\\
2	0.976562\\
};

\end{axis}
\end{tikzpicture}
\caption{On the left the average number of iterations per eigenvalue
is depicted in function of batch size $m$ for three different pole strategies.
On the right the average number of swaps scaled with $n^2$
in function of the batch size $m$.
These are the results for the random problems with two clusters.
}
\label{fig:numexp32}
\end{figure}

We conclude that we can pack the shifts tightly without a significant degradation in
convergence behavior. The advantages of allowing pole selection remain but become more problem specific.
A cache efficient implementation as well as a good criterion to pick the poles is, however, future work.

%%%%%%%%%%%%%%%%%%%%%%%%%%%%%%%%%%%%%%%%%%%%%%%%%%%%%%%%%%%%%%%%%%%%%
%% SECTION: IMPLICIT Q THEOREM
%%%%%%%%%%%%%%%%%%%%%%%%%%%%%%%%%%%%%%%%%%%%%%%%%%%%%%%%%%%%%%%%%%%%%

% VERSION 3: Theory in terms of a pencil and operators (T,S)
\section{Implicit Q theorem.}
\label{sec:implQthm}

In this section we prove the following implicit Q theorem for proper Hessenberg pairs justifying
the implicit approach since the result of a rational QZ step is uniquely determined. 

\begin{theorem}[Implicit Q theorem for proper Hessenberg pairs]
\label{thm:implQ}
Let $(A,B)$ 
%$\in \mathbb{C}^{n \times n}$ 
be a regular matrix pair and let $\hat{Q}$, $\check{Q}$, $\hat{Z}$, $\check{Z}$
be unitary matrices with
$\hat{Q}\bm{e}_1 = \sigma \check{Q}\bm{e}_1$, $|\sigma|=1$,
such that,
\begin{align*}
(\hat{A}, \hat{B}) = \hat{Q}^* (A,B) \hat{Z} \quad \text{and} 
\quad (\check{A}, \check{B}) = \check{Q}^* (A,B) \check{Z},
\end{align*}
are both proper Hessenberg pairs 
having both the same pole tuple $\Xi = ( \xi_1, \hdots,
\xi_{n-1} )$, $\xi_i \in \bar{\mathbb{C}}$, 
with the poles different from the spectrum of the pair.
%, i.e., $\Xi \cap \Lambda = \varnothing$.

Then the pairs
$(\hat{A},\hat{B})$ and $(\check{A}, \check{B})$
are \emph{essentially identical}, meaning that,
\begin{equation}
\label{eq:essential_uniq}
\hat{A} = D_1^* \check{A} D_2 \quad \text{and} \quad \hat{B} = D_1^* \check{B} D_2,
\end{equation}
with $D_1$ and $D_2$ unitary diagonal matrices.
\end{theorem}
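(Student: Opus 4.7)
My plan is to show that, at each step $k$, the column spans $\mathcal{Q}_k := \mathcal{R}(\hat{Q}_{:,1:k})$ and $\mathcal{Z}_k := \mathcal{R}(\hat{Z}_{:,1:k})$ are uniquely determined by $A$, $B$, the one-dimensional span of $\hat{\bm{q}}_1$, and the poles $\xi_1,\ldots,\xi_{k-1}$. The same recursion then governs the $\check{}$ decomposition, so combined with $\hat{\bm{q}}_1 = \sigma \check{\bm{q}}_1$ and the common pole tuple $\Xi$ one obtains $\mathcal{Q}_k = \check{\mathcal{Q}}_k$ and $\mathcal{Z}_k = \check{\mathcal{Z}}_k$ for every $k$. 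Two unitary matrices with identical nested column spans differ columnwise only by unit scalars, which is precisely \eqref{eq:essential_uniq}.

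The recursion is built from two ingredients. First, rewriting the reduction as $A\hat{Z}_{:,1:k} = \hat{Q}_{:,1:k+1}\hat{A}_{1:k+1,1:k}$ (and analogously for $B$), and combining with the observation that the first $k$ columns of a proper Hessenberg pair jointly span $\mathcal{E}_{k+1}$ --- a short induction on $k$ from both properness conditions --- produces $\mathcal{Q}_{k+1} = A\mathcal{Z}_k + B\mathcal{Z}_k$. Second, writing $\xi_k = \mu/\nu$ and $\Lambda_k = \LinOp{\mu}{\nu}$, the defining property of the pole kills the $(k{+}1,k)$-entry of $\nu\hat{A}-\mu\hat{B}$; by Hessenberg structure this gives $\Lambda_k \hat{\bm{z}}_j \in \mathcal{Q}_k$ for every $j \le k$, so $\mathcal{Z}_k \subseteq \Lambda_k^{-1}\mathcal{Q}_k$, and since the hypothesis that no pole is an eigenvalue makes $\Lambda_k$ invertible, a dimension count forces $\mathcal{Z}_k = \Lambda_k^{-1}\mathcal{Q}_k$. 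Substituting one into the other yields the intrinsic recursion $\mathcal{Q}_{k+1} = A\Lambda_k^{-1}\mathcal{Q}_k + B\Lambda_k^{-1}\mathcal{Q}_k$ starting from $\mathcal{Q}_1 = \mathrm{span}(\hat{\bm{q}}_1)$.

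Induction on $k$ then delivers $\mathcal{Q}_k = \check{\mathcal{Q}}_k$ and $\mathcal{Z}_k = \check{\mathcal{Z}}_k$ for all $k$, after which the standard fact that two unitary matrices with the same ascending flag of column spans differ by right multiplication by a unitary diagonal matrix yields $\hat{Q} = \check{Q}D_1$ and $\hat{Z} = \check{Z}D_2$; substituting back into $\hat{A} = \hat{Q}^*A\hat{Z}$ and $\hat{B} = \hat{Q}^*B\hat{Z}$ gives \eqref{eq:essential_uniq}. The main obstacle is the uniform treatment of a pole at infinity: since $(A - \xi_k B)^{-1}$ is undefined when $\xi_k = \infty$, the homogeneous pencil $\LinOp{\mu}{\nu}$ must be carried throughout, and the dimension count $\dim \mathcal{Z}_k = k = \dim \Lambda_k^{-1}\mathcal{Q}_k$ rests precisely on the standing assumption that no pole coincides with an eigenvalue of the regular pair $(A,B)$.
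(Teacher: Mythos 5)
Your proposal is correct, and it takes a genuinely different route from the paper. The paper proves the theorem by first establishing the structure of rational Krylov subspaces generated by a proper Hessenberg pair (its Theorem~4.6 and Corollary~4.7, which show that $K_n^{\text{rat}}$ and $L_{n-1}^{\text{rat}}$ built from $\bm{e}_1$ are nonsingular upper triangular), then observing that $\hat{Q}K_n^{\text{rat}}(\hat{A},\hat{B},\bm{e}_1,\Xi,\Rho)$ and $\sigma\check{Q}K_n^{\text{rat}}(\check{A},\check{B},\bm{e}_1,\Xi,\Rho)$ are two QR factorizations of one and the same matrix, so that uniqueness of the QR factorization yields $\hat{Q}=\check{Q}D_1$; the argument for $\hat{Z},\check{Z}$ is analogous after matching their first columns. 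You instead bypass the Krylov matrices and the QR-uniqueness argument entirely and work with the coupled flag recursion $\mathcal{Q}_{k+1}=A\mathcal{Z}_k+B\mathcal{Z}_k$ (from the fact that the first $k$ columns of a proper Hessenberg pair jointly span $\mathcal{E}_{k+1}$, which indeed follows by induction from the two properness conditions) and $\mathcal{Z}_k=\LinOpInv{\alpha_k}{\beta_k}\mathcal{Q}_k$ with $\xi_k=\alpha_k/\beta_k$ (from the vanishing $(k{+}1,k)$ entry of $\beta_k\hat{A}-\alpha_k\hat{B}$ plus invertibility, guaranteed by regularity and the poles avoiding the spectrum, including the pole at $\infty$ handled homogeneously); induction on $k$ then matches the flags, and unitary matrices with identical flags differ by unitary diagonal factors. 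Your argument is more elementary and self-contained, uses the hypotheses minimally, and makes the coupling between the $Q$- and $Z$-flags explicit; your subspace relations are in essence a coordinate-free version of what the paper encodes in $\mathcal{K}^{\text{rat}}$ and $\mathcal{L}^{\text{rat}}$. What the paper's heavier route buys is precisely that rational Krylov machinery, which it reuses immediately afterwards to prove that an RQZ step performs nested subspace iteration driven by rational functions (its Theorem~5.1) and to tie the method to rational Krylov subspaces, which is central to the rest of the article.
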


The implicit Q theorem guarantees that the unitary equivalence transformations, which are
implicitly applied in the direct reduction to a Hessenberg pair and in a rational QZ step
are essentially unique. Once the reduction or the rational QZ step is initiated, the
outcome is determined.

The remainder of this section contains all ingredients to prove this theorem. 
Various related implicit Q theorems already exist. Mastronardi, Vandebril, and 
Van Barel \cite{b164} provide one for semiseparable plus diagonal matrices
linked to rational Krylov spaces.
Pranic, Mach, and Vandebril \cite{Mach2014} formulate a variant for extended Hessenberg
plus diagonal matrices linked to general rational Krylov subspaces as did 
Berljafa and G\"uttel \cite{Berljafa2015}  for (rectangular) Hessenberg pairs.
%connected to rational Krylov subspaces without any constraints on the poles. 

The proof we provide here significantly differs from the one by Berljafa and G\"uttel,
who rely on direct computations and utilize the invertibility of $B$ to formulate the theory for
%everything back to 
the single matrix setting. We make use of
the properties of the associated Krylov matrices, as done by Watkins for the classical
case \cite{b333}; this allows us to easily prove that the rational QZ algorithm performs
nested subspace iteration driven by a rational function. Also no constraints on the
invertibility of $B$ are imposed.
%
% and is suitable for rectangular
%Hessenberg pairs $(A,B)$. 

\subsection{Rational Krylov matrices and subspaces.}
\label{ssec:ratkrylmat}

We define \emph{rational Krylov matrices} generated by a matrix pair $(A,B)$, a vector
$\bm{v}$, and a driving rational function determined by shifts $\Rho$ and poles $\Xi$.
These rational Krylov matrices span Krylov subspaces, which, for consistency, we will name
\emph{rational Krylov subspaces}. The description holds for regular matrix pairs, so
the matrices do not need to be of Hessenberg form.

For the aim of a concise notation we introduce two
elementary rational functions of a pair $(A,B)$ with shift $\varrho = \mu/\nu \in \bar{\CC}$ and pole 
$\xi = \alpha/\beta \in \bar{\CC}$:
\begin{equation}
\label{eq:elementary_rationals}
\begin{split}
M(\varrho,\xi) & = \LinOp{\mu}{\nu} \LinOpInv{\alpha}{\beta}, \\
N(\varrho,\xi) & = \LinOpInv{\alpha}{\beta} \LinOp{\mu}{\nu}.
\end{split}
\end{equation}
We assume, throughout the remainder of the text, the shift different from the pole $\varrho
\neq \xi$ and since we take inverses, the pole may not be an eigenvalue $\xi \notin
\Lambda$.
Note that $N(\varrho,\xi)$ and $M(\varrho,\xi)$ represent an entire class of matrices generated by parameters
that result in the correct shift and pole.
These are all scalar multiples of one another and as the theory remains scale invariant,
every representative is fine.

We remark that in case $B$ were invertible, which we do not assume in the remainder of the
text, that the following relations hold,
\begin{equation}
\label{eq:elementary_rationals_I}
\begin{split}
M(\varrho,\xi) & = (\nu AB^{-1}- \mu I) (\beta AB^{-1} - \alpha I)^{-1}, \\
N(\varrho,\xi) & = (\beta B^{-1}A- \alpha I)^{-1} (\nu B^{-1}A - \mu I). 
\end{split}
\end{equation}
This could be helpful to link this analysis to existing theorems
of Berljafa \& G\"uttel \cite{Berljafa2015}, and Watkins \cite{b333}.

The elementary rational functions are used to define rational Krylov matrices.
\begin{definition}[rational Krylov matrices]
\label{def:ratkrylmat}
Let $(A,B) \in \mathbb{C}^{n \times n}$ be a regular matrix pair,
$\bm{v} \in \mathbb{C}^n$ a nonzero vector, $\Xi = ( \xi_1, \hdots, \xi_{k-1}
)$, $\xi_i \in \bar{\CC}$, the pole tuple with the poles different from the spectrum,
%, $\Xi \cap \Lambda = \varnothing$,
and $P = ( \varrho_1, \hdots, \varrho_{k-1} )$, $\varrho_i \in \bar{\CC}$, 
the tuple of shifts distinct from the poles, with $k\leq n$.
The corresponding rational Krylov matrices are defined as: 
\begin{equation}
\label{eq:ratkrylmat}
\begin{split}
K^{\text{rat}}_{k}(A,B,\bm{v}, \Xi, P) & = 
\left[ \bm{v}, %\,
M(\varrho_1,\xi_1) \bm{v}, %\,
M(\varrho_2,\xi_2) M(\varrho_1,\xi_1) \bm{v},
\, \hdots, %\,
\prod_{i=1}^{k-1}M(\varrho_i,\xi_i) \bm{v} \right], \\
L^{\text{rat}}_{k}(A,B,\bm{v}, \Xi, P) & = 
\left[ \bm{v}, %\,
N(\varrho_1,\xi_1)  \bm{v}, %\,
N(\varrho_2,\xi_2) N(\varrho_1,\xi_1) \bm{v},
\, \hdots, %\,
\prod_{i=1}^{k-1} N(\varrho_i,\xi_i) \bm{v} \right].
\end{split}
\end{equation}
\end{definition}

The following properties of the elementary rational functions are frequently
used in the remainder of the text.
\begin{lemma}
\label{lemma:prop_elementary_rationals}
The elementary rational functions \eqref{eq:elementary_rationals} satisfy: 
%the three
%properties listed below.
\begin{enumerate}[I.]
\item Commutativity: For shifts $\varrho, \check{\varrho}$ different from the poles
$\xi, \check{\xi}$,
% satisfying $( \varrho_1, \varrho_2 ) \cap ( \xi_1, \xi_2 ) = \varnothing$,
\begin{equation}
\label{eq:commutativity_rationals}
\begin{split}
M(\varrho,\xi) \; M(\check{\varrho},\check{\xi}) & = M(\check{\varrho},\check{\xi}) \; M(\varrho,\xi), \\
N(\varrho,\xi) \; N(\check{\varrho},\check{\xi}) & = N(\check{\varrho},\check{\xi}) \; N(\varrho,\xi).
\end{split}
\end{equation}

\item Inverse: If the shift is not an eigenvalue $\varrho \notin \Lambda$ and different
from the pole $\varrho \neq \xi$ then,
\begin{equation}
\label{eq:inverse_rationals}
\begin{split}
M(\varrho,\xi)^{-1} & = M(\xi,\varrho), \\
N(\varrho,\xi)^{-1} & = N(\xi,\varrho).
\end{split}
\end{equation}

\item Shift invariance: For any nonzero vector $\bm{v} \in \CC^n$, 
and parameters $\varrho, \check{\varrho} \neq \xi$,
\begin{equation}
\label{eq:shift_invariance_rationals}
\begin{split}
\mathcal{R}(\bm{v}, \; M(\varrho,\xi) \bm{v}) & = \mathcal{R}(\bm{v}, \; M(\check{\varrho},\xi) \bm{v}), \\
\mathcal{R}(\bm{v}, \; N(\varrho,\xi) \bm{v}) & = \mathcal{R}(\bm{v}, \; N(\check{\varrho},\xi) \bm{v}).
\end{split}
\end{equation}

\item Nested shift invariance: For any nonzero vector $\bm{v} \in \CC^n$,
all shifts $\varrho_i$ different from all poles $\xi_j$ for $i,j$ from $1$ to
$k{-}1$, and an alternative shift $\check{\varrho} \notin \Xi$,
$k \leq n$,
\begin{equation}
\label{eq:nested_shift_invariance_rationals}
\begin{split}
\mathcal{R}\left(\bm{v}, M(\varrho_1,\xi_1) \bm{v}, \hdots, \prod_{i=1}^{k-1}M(\varrho_i,\xi_i) \bm{v} \right) 
& = \mathcal{R}\left(\bm{v}, M(\check{\varrho},\xi_1) \bm{v}, \hdots, \prod_{i=1}^{k-1}M(\check{\varrho},\xi_i) \bm{v}\right), \\
\mathcal{R}\left(\bm{v}, N(\varrho_1,\xi_1) \bm{v}, \hdots, \prod_{i=1}^{k-1}N(\varrho_i,\xi_i) \bm{v} \right) 
& = \mathcal{R}\left(\bm{v}, N(\check{\varrho},\xi_1) \bm{v}, \hdots, \prod_{i=1}^{k-1}N(\check{\varrho},\xi_i) \bm{v}\right).
\end{split}
\end{equation}
\end{enumerate}
\end{lemma}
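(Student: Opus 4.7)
My plan is to reduce every claim to properties of the resolvent $R(\xi) := B(A - \xi B)^{-1}$. Normalizing $\nu = \beta = 1$ for the finite cases and picking alternative representatives when $\varrho$ or $\xi$ is infinite, the identity $A - \varrho B = (A - \xi B) + (\xi - \varrho) B$ gives the central rewriting $M(\varrho, \xi) = I + (\xi - \varrho) R(\xi)$; the $N$-versions follow from the symmetric argument with $R'(\xi) := (A - \xi B)^{-1} B$.

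Parts I--III are short. For I, expanding $M(\varrho_1, \xi_1) M(\varrho_2, \xi_2)$ via the rewriting shows that the ordering of the factors affects only the commutator $[R(\xi_1), R(\xi_2)]$, which vanishes by the classical resolvent identity $(A - \xi_1 B)^{-1} - (A - \xi_2 B)^{-1} = (\xi_1 - \xi_2)(A - \xi_1 B)^{-1} B (A - \xi_2 B)^{-1}$: its symmetry under $\xi_1 \leftrightarrow \xi_2$ yields $(A - \xi_1 B)^{-1} B (A - \xi_2 B)^{-1} = (A - \xi_2 B)^{-1} B (A - \xi_1 B)^{-1}$, and left multiplication by $B$ produces $R(\xi_1) R(\xi_2) = R(\xi_2) R(\xi_1)$. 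Part II is immediate from cancellation in $M(\varrho, \xi) M(\xi, \varrho) = (A - \varrho B)(A - \xi B)^{-1}(A - \xi B)(A - \varrho B)^{-1} = I$. Part III follows at once because $M(\varrho, \xi) \bm{v} - \bm{v} = (\xi - \varrho) R(\xi) \bm{v}$ with $\xi - \varrho \neq 0$, so $\mathcal{R}(\bm{v}, M(\varrho, \xi) \bm{v}) = \mathcal{R}(\bm{v}, R(\xi) \bm{v})$ does not involve $\varrho$.

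Part IV is the main obstacle. My plan is to introduce the ``pole-only'' reference space $W_j := \mathcal{R}(\bm{v}, R(\xi_1) \bm{v}, \hdots, R(\xi_j) \bm{v})$ and show by induction on $j$ that $V_j := \mathcal{R}(K_0, \hdots, K_j)$ equals $W_j$, where $K_l := \prod_{i=1}^l M(\varrho_i, \xi_i) \bm{v}$; since $W_j$ is manifestly independent of the shifts, IV then follows for the two specific shift tuples in the statement. For the inclusion $V_j \subseteq W_j$ I would expand $\prod_i (I + c_i R(\xi_i)) \bm{v}$ as a sum over subsets $S$ of products $\prod_{i \in S} R(\xi_i) \bm{v}$ and iteratively apply the consequence $R(\xi_a) R(\xi_b) = (R(\xi_a) - R(\xi_b))/(\xi_a - \xi_b)$ of the resolvent identity to collapse each such product into a linear combination of single-resolvent vectors, with the repeated-pole case absorbed by its Hermite analogue. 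For the reverse $W_j \subseteq V_j$ I would induct on $j$ as well: using the inductive hypothesis $W_{j-1} \subseteq V_{j-1}$, write $K_{j-1} = \bm{v} + \sum_{l<j} \alpha_l R(\xi_l) \bm{v}$, where the $\alpha_l$ are exactly the partial-fraction residues of $f(z) := \prod_{i<j}(z - \varrho_i)/(z - \xi_i)$. The relation $K_j - K_{j-1} = c_j R(\xi_j) K_{j-1}$ puts $R(\xi_j) K_{j-1}$ into $V_j$, and expanding this vector via the resolvent identity gives $R(\xi_j) K_{j-1} = f(\xi_j) R(\xi_j) \bm{v} + (\text{vector in } V_{j-1})$. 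The crucial observation --- and the exact place where the hypothesis ``shifts distinct from poles'' becomes indispensable --- is that $f(\xi_j) = \prod_{i<j}(\xi_j - \varrho_i)/\prod_{i<j}(\xi_j - \xi_i)$ is nonzero precisely because $\varrho_i \neq \xi_j$ for all $i$, which allows us to solve for $R(\xi_j) \bm{v} \in V_j$ and close the induction.
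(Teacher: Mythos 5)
Your proposal is correct, but it takes a genuinely different route from the paper, most visibly in parts I and IV. For commutativity the paper reduces $M$ and $N$ to rational functions of the single matrix $AB^{-1}$ (resp.\ $B^{-1}A$) and invokes a continuity argument to cover singular $B$; your derivation of $[B(A-\xi_1B)^{-1},\,B(A-\xi_2B)^{-1}]=0$ from the pencil resolvent identity works directly on the pair without any invertibility or limiting argument, which is arguably more in the spirit of the paper's stated goal of avoiding $AB^{-1}$. Parts II and III coincide with the paper (the rewriting $M(\varrho,\xi)=I+(\xi-\varrho)B(A-\xi B)^{-1}$ is exactly the paper's key identity). For the nested shift invariance IV, the paper inducts on $k$ using only property III and commutativity: it peels off the last factor, swaps $\varrho_k$ for $\check{\varrho}$ via shift invariance applied to the vector $\prod_{i=1}^{k-1}M(\varrho_i,\xi_i)\bm{v}$, and pulls $M(\check{\varrho},\xi_k)$ out of the span using the induction hypothesis; this is shorter and, importantly, is uniform over repeated poles and poles at $\infty$. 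Your route instead identifies both sides with the explicit shift-free space spanned by $\bm{v}$ and the resolvent vectors $B(A-\xi_i B)^{-1}\bm{v}$, with the nonvanishing leading coefficient $f(\xi_j)=\prod_{i<j}(\xi_j-\varrho_i)/(\xi_j-\xi_i)$ isolating exactly where the hypothesis $\varrho_i\neq\xi_j$ enters --- a more quantitative picture that the paper's argument hides. The price is bookkeeping: as written, your reference space and the partial-fraction residues presume distinct finite poles, and the lemma (and the paper's main use cases, e.g.\ all poles at $\infty$ or all poles equal) requires the confluent Hermite version and the pole-at-infinity representatives that you only gesture at; these do go through, but carrying them out is precisely the work the paper's shift-invariance induction avoids.
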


\begin{proof}
If $B$ is invertible, property I of the Lemma follows from
Equation \eqref{eq:elementary_rationals_I} and the property
that any matrix commutes with its shifted inverse. For
singular $B$ the same result follows from an elementary
continuity argument.
%Part I of the Lemma is trivial if either $A$ or $B$ is invertible. 
%This can be seen by reverting back to the alternative form of $M$ and $N$
%of Equation \eqref{eq:elementary_rationals_I}. Combine this with the property that any
%matrix commutes with its shifted inverse and the result is immediate.
%If both $A$ and $B$ are singular but $(A,B)$ is regular, the pencil can be transformed
%to $(\hat{A},\hat{B}) = (\psi A - \phi B, \nu A - \mu B)$, with $\mu \psi \neq \phi \nu$.
%A short calculation shows that an eigenvalue $\lambda = \alpha/\beta$ of $(A,B)$ is mapped to the eigenvalue
%$\hat{\lambda} = \hat{\alpha}/\hat{\beta} = (\beta \phi - \alpha \psi)/(\beta \mu - \alpha \nu)$ of $(\hat{A},\hat{B})$.
%Consequently, if $\beta \phi - \alpha \psi \neq 0$ and $\beta \mu - \alpha \nu \neq 0$ for all eigenvalues of $(A,B)$,
%or, equivalently, if $\phi/\psi$ and $\mu/\nu$ are different from the eigenvalues of $(A,B)$ then both $\hat{A}$ and
%$\hat{B}$ are nonsingular.
%%If $\phi / \mu$ does not coincide with an eigenvalue of $(A,B)$, $(\hat{A},\hat{B})$ has no eigenvalue equal
%%to zero. Similarly, if $\nu / \psi$ is no eigenvalue of $(A,B)$, $(\hat{A},\hat{B})$ has no eigenvalue at infinity.
%This implies that the matrices $\hat{M}$ and $\hat{N}$ in terms of $(\hat{A},\hat{B})$ commute. 
%Since the inverse transformation from $(\hat{A},\hat{B})$ to $(A,B)$ exists, we conclude that the original operators also commute.
Property II is trivial.
Property III follows directly from:
\begin{equation}
\label{eq:alternativeMN}
\begin{split}
M(\varrho,\xi) = I + (\xi - \varrho) B (A - \xi B)^{-1}, \\
N(\varrho,\xi) = I + (\xi - \varrho) (A - \xi B)^{-1} B,
\end{split}
\end{equation}
in case $\varrho \neq \infty$, $\xi \neq \infty$. It is clear that both $\mathcal{R}(\bm{v}, M(\varrho,\xi) \bm{v})$
and $\mathcal{R}(\bm{v}, N(\varrho,\xi) \bm{v})$ are independent of $\varrho$.
Similar expressions hold in case either $\varrho_i = \infty$ or $\xi = \infty$.
Property IV is proven by induction.
The base case $k{=}2$ is equal to the shift invariance property of 
Equation~\eqref{eq:shift_invariance_rationals}.
Now assume the property holds up to index $k$, denote this subspace as $\mathcal{U}_{k}$.
We assume for the induction step that the subspace is of full dimension.
If it becomes an invariant subspace,
the subspace is no longer expanded by adding vectors of the type 
$M(\varrho,\xi_k)\hat{\bm{v}}$ with
$\hat{\bm{v}}$ a vector in the subspace. By the induction hypothesis,
the property holds in this case.
The subspace of dimension $k{+}1$ is equal to:
\begin{equation*}
\mathcal{U}_{k+1}
= \mathcal{R}\left(\bm{v}, \, M(\varrho_1,\xi_1) \bm{v}, \hdots, \, \prod_{i=1}^{k}M(\varrho_i,\xi_i) \bm{v} \right)
= \mathcal{U}_{k}
+ \mathcal{R}\left(\prod_{i=1}^{k}M(\varrho_i,\xi_i) \bm{v} \right).
\end{equation*}
By the induction hypothesis, the result holds for $\mathcal{U}_{k}$.
We now modify the additional term in the subspace $\mathcal{U}_{k+1}$
to prove the result:
\begin{equation*}
\begin{split}
\mathcal{U}_{k+1}
& = \mathcal{U}_{k}
+ \mathcal{R}\left(M(\varrho_k,\xi_k) \prod_{i=1}^{k-1}M(\varrho_i,\xi_i) \bm{v} \right)
= \mathcal{U}_{k}
+ \mathcal{R}\left(M(\check{\varrho},\xi_k) \prod_{i=1}^{k-1}M(\varrho_i,\xi_i) \bm{v} \right) \\
& = \mathcal{U}_{k}
+ M(\check{\varrho},\xi_k) \, \mathcal{R}\left(\prod_{i=1}^{k-1}M(\check{\varrho},\xi_i) \bm{v} \right)
= \mathcal{R}\left(\bm{v}, \, M(\check{\varrho},\xi_1) \bm{v}, \hdots, \, \prod_{i=1}^{k}M(\check{\varrho},\xi_i) \bm{v}\right).
\end{split}
\end{equation*}
In the first equality, the $k$th term is extracted.
In the second equality the shift $\varrho_k$ is changed to $\check{\varrho}$
based on the shift invariance property of Equation~\eqref{eq:shift_invariance_rationals};
this is permitted by the fact that $\prod_{i=1}^{k-1}M(\varrho_i,\xi_i) \bm{v}$ is a vector
in $\mathcal{U}_{k}$.
The third equality extracts the $k$th term and has changed the other $k{-}1$ shifts to
$\check{\varrho}$ based on the induction hypothesis and the nestedness of the involved
subspaces.
The last equality is immediate.
The second result of property IV can be proven with the same reasoning.
\end{proof}

We can now define the \emph{rational Krylov subspaces} as the column spaces of the rational
Krylov matrices from \Cref{def:ratkrylmat}. It follows directly from the nested shift
invariance property of \Cref{lemma:prop_elementary_rationals} that these subspaces are
independent of the choice of $\Rho$.

\begin{definition}[rational Krylov subspaces]
\label{def:rksubspace}
We define the rational Krylov subspaces $\mathcal{K}_{k}^{\text{rat}}$ and
$\mathcal{L}_{k}^{\text{rat}}$, $k\leq n$, associated with the regular pair $(A,B) \in \CC^{n{\times}n}$, 
a vector $\bm{v} \in \CC^n$, and pole tuple $\Xi = ( \xi_1, \hdots, \xi_{k-1} )$, assuming 
the poles different from the eigenvalues as,
\begin{equation}
\label{eq:rksubspace_definition}
\begin{split}
\mathcal{K}_{k}^{\text{rat}}(A,B,\bm{v},\Xi) & = \mathcal{R}(K_{k}^{\text{rat}}(A,B,\bm{v}, \Xi, \Rho)), \\
\mathcal{L}_{k}^{\text{rat}}(A,B,\bm{v},\Xi) & = \mathcal{R}(L_{k}^{\text{rat}}(A,B,\bm{v}, \Xi, \Rho)),
\end{split}
\end{equation}
where the shift tuple $P$ is freely chosen, assuming all shifts different
from all poles.
%$P\cap\Xi=\varnothing$.
\end{definition}

The two rational Krylov subspaces reduce to the same subspace if 
$B$ is the identity matrix
which is in agreement with earlier definitions. 
The rational Krylov subspaces satisfy the following elementary
properties.

\begin{lemma}[properties of rational Krylov subspaces]
\label{lemma:prop_rksubspace}
The rational Krylov subspaces $\mathcal{K}^{\text{rat}}$ and $\mathcal{L}^{\text{rat}}$ generated
from $(A,B) \in \CC^{n{\times}n}$, $\bm{v} \in \CC^n$, and 
$\Xi=( \xi_1, \hdots, \xi_{n-1} )$, assuming all poles different from the eigenvalues,
%$\Xi \cap \Lambda = \varnothing$
satisfy the following properties.
\begin{enumerate}[I.]
\item{They form a sequence of nested subspaces:
\begin{equation}
\label{eq:nested_rksubspace}
\mathcal{K}^{\text{rat}}_{1} \subseteq \mathcal{K}^{\text{rat}}_{2} \subseteq \hdots \subseteq \mathcal{K}^{\text{rat}}_{n}
\quad \text{and} \quad
\mathcal{L}^{\text{rat}}_{1} \subseteq \mathcal{L}^{\text{rat}}_{2} \subseteq \hdots \subseteq \mathcal{L}^{\text{rat}}_{n}.
\end{equation}
}
\item{
For $k=1, \hdots, n{-}1$, %$\hat{\varrho} \notin \Lambda$, 
with the shift
$\hat{\varrho}$ different from all eigenvalues and poles,
%$\hat{\varrho} \cap \Lambda = \varnothing$,  
and an alternative shift 
%not equal to a pole $
$\check{\varrho} \neq \hat{\varrho}$
we get:
\begin{equation}
\label{eq:rk_to_krylov}
\begin{split}
\mathcal{K}_{k}^{\text{rat}}(A,B,\bm{v},\Xi_{1:k-1}) & 
= \prod_{i=1}^{k-1} M(\hat{\varrho},\xi_i) \
\mathcal{K}_k(M(\check{\varrho},\hat{\varrho}),\bm{v}) \\ 
& =  \mathcal{K}_k \left(M(\check{\varrho},\hat{\varrho}), \prod_{i=1}^{k-1}
M(\hat{\varrho},\xi_i)\ \bm{v}\right), \\ 
\mathcal{L}_{k}^{\text{rat}}(A,B,\bm{v},\Xi_{1:k-1}) & 
= \prod_{i=1}^{k-1} N(\hat{\varrho},\xi_i) \
\mathcal{K}_k(N(\check{\varrho},\hat{\varrho}),\bm{v}) \\
& = \mathcal{K}_k\left(N(\check{\varrho},\hat{\varrho}),\prod_{i=1}^{k-1} N(\hat{\varrho},\xi_i)\ \bm{v}\right),
\end{split}
\end{equation}
which connects rational Krylov subspaces with regular Krylov subspaces.
}
\item{
For $k=1, \hdots, n{-}1$, and $\varrho_k \notin \Xi_{1:k-1}$,
\begin{equation}
\label{eq:increase_rksubspace}
\begin{split}
M(\varrho_k,\xi_k) \ \mathcal{K}_{k}^{\text{rat}}(A,B,\bm{v},\Xi_{1:k-1}) & \subseteq \mathcal{K}_{k+1}^{\text{rat}}(A,B,\bm{v},\Xi_{1:k}), \\
N(\varrho_k,\xi_k)\ \mathcal{L}_{k}^{\text{rat}}(A,B,\bm{v},\Xi_{1:k-1})  & \subseteq \mathcal{L}_{k+1}^{\text{rat}}(A,B,\bm{v},\Xi_{1:k}).
\end{split}
\end{equation}
}
\end{enumerate}
\end{lemma}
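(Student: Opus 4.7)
The plan is to prove the three properties in turn, relying on the algebra of elementary rational functions developed in \Cref{lemma:prop_elementary_rationals} and the freedom in selecting the shift tuple granted by \Cref{def:rksubspace}. Since the statements for $\mathcal{K}^{\text{rat}}$ and $\mathcal{L}^{\text{rat}}$ are fully parallel (swap $M$ for $N$), I would write up only the $\mathcal{K}^{\text{rat}}$ case. Property I is then immediate: extending the shift tuple $(\varrho_1,\ldots,\varrho_{k-1})$ used for $\mathcal{K}_k^{\text{rat}}$ by one more shift $\varrho_k$ to generate $\mathcal{K}_{k+1}^{\text{rat}}$, the first $k$ columns of $K_{k+1}^{\text{rat}}$ coincide with $K_k^{\text{rat}}$, yielding the inclusion.

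For Property II I would first use nested shift invariance to replace every shift in $\mathcal{K}_k^{\text{rat}}$ by a single $\hat{\varrho}$. After factoring the invertible operator $\Pi_{k-1}=\prod_{i=1}^{k-1}M(\hat{\varrho},\xi_i)$ out of both sides of \eqref{eq:rk_to_krylov} (using $M(\hat{\varrho},\xi_i)^{-1}=M(\xi_i,\hat{\varrho})$ on the left), the claim reduces to an equality of two Krylov-like subspaces inside the commutative subalgebra generated by $\{M(\mu,\hat{\varrho})\}_\mu$. Multiplying through by the invertible factor $(A-\hat{\varrho} B)^{k-1}$ (invertible since $\hat{\varrho}\notin\Lambda$) converts the basis vectors on both sides into vectors of the form $p(A,B)\bm{v}$ for polynomials of degree $k-1$ in the pencil: the LHS polynomials are $p_j(x)=(x-\hat{\varrho})^{j-1}\prod_{i=j}^{k-1}(x-\xi_i)$ and the RHS polynomials are $q_j(x)=(x-\check{\varrho})^{j-1}(x-\hat{\varrho})^{k-j}$, for $j=1,\ldots,k$. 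Because $\hat{\varrho}\neq\xi_i$ and $\hat{\varrho}\neq\check{\varrho}$, each family consists of $k$ polynomials of degree exactly $k-1$ with pairwise distinct orders of vanishing at $\hat{\varrho}$, so each is a basis of the polynomials of degree $\leq k-1$, forcing the two spans to coincide. The second equality of \eqref{eq:rk_to_krylov} then follows from commuting $\Pi_{k-1}$ with $M(\check{\varrho},\hat{\varrho})$.

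For Property III I would bootstrap Property II. The telescoping identity $M(\varrho_k,\xi_k)M(\xi_k,\hat{\varrho})=M(\varrho_k,\hat{\varrho})$ together with commutativity yield
\begin{equation*}
M(\varrho_k,\xi_k)\prod_{i=1}^{k-1}M(\hat{\varrho},\xi_i)=\prod_{i=1}^{k}M(\hat{\varrho},\xi_i)\,M(\varrho_k,\hat{\varrho}),
\end{equation*}
so, applying Property II on both sides, the inclusion reduces to $M(\varrho_k,\hat{\varrho})\,\mathcal{K}_k(T,\bm{v})\subseteq\mathcal{K}_{k+1}(T,\bm{v})$ where $T=M(\check{\varrho},\hat{\varrho})$. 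Since $M(\varrho_k,\hat{\varrho})$ and $T$ share the pole $\hat{\varrho}$, the affine relation $M(\varrho_k,\hat{\varrho})=(1-c)I+cT$ holds with scalar $c=(\hat{\varrho}-\varrho_k)/(\hat{\varrho}-\check{\varrho})$; applying this to any $T^j\bm{v}\in\mathcal{K}_k(T,\bm{v})$ yields a linear combination of $T^j\bm{v}$ and $T^{j+1}\bm{v}$, visibly in $\mathcal{K}_{k+1}(T,\bm{v})$.

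The hard step is the polynomial-span identification inside Property II, which must be carried out without assuming $B$ invertible. Multiplying by the invertible $(A-\hat{\varrho} B)^{k-1}$ trades a formal rational denominator for a genuine invertible prefactor, reducing the question to linear independence of two explicit polynomial families, a purely scalar statement; all other manipulations stay inside the commutative subalgebra generated by the $M(\mu,\hat{\varrho})$, where only commutativity and the telescoping identity from \Cref{lemma:prop_elementary_rationals} are invoked, and no inversion of $B$ is ever required.
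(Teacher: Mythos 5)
Your overall skeleton for Property II starts exactly as the paper does (replace all shifts by $\hat{\varrho}$ via nested shift invariance, then factor out $\prod_{i=1}^{k-1}M(\hat{\varrho},\xi_i)$ using $M(\hat{\varrho},\xi_i)^{-1}=M(\xi_i,\hat{\varrho})$), and your Properties I and III are fine in principle, but the key step you substitute for the paper's final appeal to nested shift invariance has a genuine gap. You claim that multiplying the reduced spans by $(A-\hat{\varrho}B)^{k-1}$ turns the vectors $\prod_{i=j}^{k-1}M(\xi_i,\hat{\varrho})\,\bm{v}$ and $M(\check{\varrho},\hat{\varrho})^{j-1}\bm{v}$ into vectors of the form $p(A,B)\bm{v}$ with $p_j(x)=(x-\hat{\varrho})^{j-1}\prod_{i=j}^{k-1}(x-\xi_i)$, etc. This cancellation is single-matrix functional calculus and does not survive in the pencil setting: $(A-\hat{\varrho}B)$ does not commute with $(A-\xi_i B)$ (their commutator is $(\xi_i-\hat{\varrho})(BA-AB)$), hence it does not commute with $M(\xi_i,\hat{\varrho})=(\beta_i A-\alpha_i B)(A-\hat{\varrho}B)^{-1}$ either, so $(A-\hat{\varrho}B)^{k-1}\prod_{i=j}^{k-1}M(\xi_i,\hat{\varrho})\bm{v}$ is \emph{not} a "polynomial in the pencil" applied to $\bm{v}$, and indeed "$p(A,B)$" is not even well defined for noncommuting $A,B$. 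The multiplier that would make your scalar argument literal is $(AB^{-1}-\hat{\varrho}I)^{k-1}$, which needs exactly the invertibility of $B$ (or a continuity argument) that you advertise as being avoided; the commutativity of the $M$'s guaranteed by \Cref{lemma:prop_elementary_rationals} does not rescue the step, because $(A-\hat{\varrho}B)$ itself lies outside the commutative algebra generated by $\lbrace M(\mu,\hat{\varrho})\rbrace_{\mu}$. Since Property III is bootstrapped from II, the gap propagates there as well.

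The repair is short and keeps the spirit of your degree argument: drop the multiplication by $(A-\hat{\varrho}B)^{k-1}$ entirely and use the identity \eqref{eq:alternativeMN}, $M(\mu,\hat{\varrho})=I+(\hat{\varrho}-\mu)\,B(A-\hat{\varrho}B)^{-1}$, valid without inverting $B$. All operators in the reduced claim are then polynomials in the single matrix $W=B(A-\hat{\varrho}B)^{-1}$: the vectors $\prod_{i=j}^{k-1}M(\xi_i,\hat{\varrho})\bm{v}$ are $p_{k-j}(W)\bm{v}$ with $\deg p_{k-j}=k-j$ exactly (leading coefficient $\prod_i(\hat{\varrho}-\xi_i)\neq 0$ because $\hat{\varrho}$ differs from all poles), while $M(\check{\varrho},\hat{\varrho})^{j-1}\bm{v}=\bigl(I+(\hat{\varrho}-\check{\varrho})W\bigr)^{j-1}\bm{v}$ has degree exactly $j-1$; both families therefore span $\mathcal{K}_k(W,\bm{v})$, which gives \eqref{eq:rk_to_krylov}, and your affine relation $M(\varrho_k,\hat{\varrho})=(1-c)I+cT$ in Property III is the same identity in disguise. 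Alternatively you can simply invoke the nested shift invariance property IV of \Cref{lemma:prop_elementary_rationals} at this point, which is what the paper does; note that the paper pays for commutativity once, via the $AB^{-1}$ representation plus a continuity argument for singular $B$, so the claim of a proof "with no inversion of $B$ ever required" needs either \eqref{eq:alternativeMN} or that continuity step somewhere.
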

\begin{proof}

The nestedness follows directly from the definition.
% of the rational Krylov matrices 
%and from the definition of the rational Krylov subspaces as the column spaces of these matrices.
To prove the second property we rely on
\Cref{lemma:prop_elementary_rationals},
\begin{equation*}
\begin{split}
\mathcal{K}_k^{\text{rat}}(A,B,\bm{v},\Xi_{1:k-1})  
& = \mathcal{R}\left(\bm{v}, M(\hat{\varrho},\xi_1)\,\bm{v}, \hdots, \prod_{i=1}^{k-1} M(\hat{\varrho},\xi_i) \, \bm{v}\right) \\
& = \prod_{i=1}^{k-1} M(\hat{\varrho},\xi_i) \; \mathcal{R}\left( \prod_{i=1}^{k-1} M(\xi_i,\hat{\varrho}) \, \bm{v},\prod_{i=2}^{k-1} M(\xi_i,\hat{\varrho}) \, \bm{v}, \hdots, \, \bm{v}\right) \\
& = \prod_{i=1}^{k-1} M(\hat{\varrho},\xi_i) \; \mathcal{R}\left( \prod_{i=1}^{k-1} M(\check{\varrho},\hat{\varrho}) \, \bm{v},\prod_{i=2}^{k-1} M(\check{\varrho},\hat{\varrho}) \, \bm{v}, \hdots, \, \bm{v}\right) \\
& = \prod_{i=1}^{k-1} M(\hat{\varrho},\xi_i) \; \mathcal{K}_k( M(\check{\varrho},\hat{\varrho}), \bm{v}).
\end{split}
\end{equation*}
The first equality is the definition with $\Rho = ( \hat{\varrho}, \hdots, \hat{\varrho} )$. The second
equality extracts the last rational term. 
The third equality applies the nested shift invariance property of \Cref{lemma:prop_elementary_rationals} 
to change all shifts $\xi_i$ to $\check{\varrho}$.
We end up with a Krylov subspace in the last
equality. The result for $\mathcal{L}^{\text{rat}}$ is proven in a similar way.
The third property follows from the second property and the nestedness of Krylov
subspaces, setting $\hat{\varrho}=\varrho_k$.
\end{proof}

We remark that item~II states that rational Krylov subspaces are nothing else
than Krylov subspaces whose starting vector is modified by a rational function determined
by the poles $\Xi$. 
%To emphasize the rational driving function we will, however, stick to
%the name rational Krylov subspaces.

\subsection{Proper Hessenberg pairs and rational Krylov.}
\label{ssec:hesspairratkryl}

In the previous section $(A,B)$ could be any regular pair.
Now we'll see that if $(A,B)$ is a proper Hessenberg pair, the rational Krylov subspaces
and matrices have a special structure.

\begin{theorem}
\label{thm:rksubspace_structure}
Let $(A,B) \in \mathbb{C}^{n \times n}$ be a proper Hessenberg pair having poles $\Xi =
( \xi_1, \hdots, \xi_{n-1} )$ distinct from the eigenvalues,
%i.e., $\Xi \cap \Lambda = \varnothing$.
Then for $k$ from $1$ to $n$,
\begin{equation}
\label{eq:rksubspace_K_structure}
\mathcal{K}_{k}^{\text{rat}}(A,B,\bm{e}_1,( \xi_1, \hdots, \xi_{k-1} )) =  \mathcal{E}_k,
\end{equation}
while for $k$ from $1$ to $n{-}1$,
\begin{equation}
\label{eq:rksubspace_L_structure}
 \mathcal{L}_{k}^{\text{rat}}(A,B,\bm{e}_1, (\xi_2, \hdots, \xi_k)) = \mathcal{E}_k.
\end{equation}
\end{theorem}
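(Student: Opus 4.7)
My plan is to prove both identities by induction on $k$, carrying out the argument for $\mathcal{K}_{k}^{\text{rat}}$ in detail; the $\mathcal{L}_{k}^{\text{rat}}$ claim then follows along the same lines, with the two factors of $N(\varrho,\xi)$ applied in the opposite order and the pole tuple shifted by one index to make the block sizes match. Throughout I use the shift invariance in \Cref{lemma:prop_elementary_rationals}~IV, which says that the subspace $\mathcal{K}_{k}^{\text{rat}}$ does not depend on the particular admissible choice of shifts, so it suffices to verify the identity for one convenient shift tuple.

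The base case $k=1$ is $\mathcal{R}(\bm{e}_1)=\mathcal{E}_1$. For the inductive step, write the new basis vector as $\bm{v}_{k+1} = M(\varrho_k,\xi_k)\bm{v}_k = (\nu_k A - \mu_k B)(\beta_k A - \alpha_k B)^{-1}\bm{v}_k$ and analyse it in two stages. By \Cref{lemma:properHessproperties}~II, since $\alpha_k/\beta_k = \xi_k$, the matrix $\beta_k A - \alpha_k B$ is block upper triangular with leading $k\times k$ block, and the same holds for its inverse; hence $\bm{w} := (\beta_k A - \alpha_k B)^{-1}\bm{v}_k$ stays in $\mathcal{E}_k$. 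Because $\varrho_k\notin\Xi$, \Cref{lemma:properHessproperties}~I gives that $\nu_k A - \mu_k B$ is proper upper Hessenberg, so it carries $\mathcal{E}_k$ into $\mathcal{E}_{k+1}$. Combined, $\bm{v}_{k+1}\in\mathcal{E}_{k+1}$ and thus $\mathcal{K}_{k+1}^{\text{rat}}\subseteq\mathcal{E}_{k+1}$.

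To upgrade inclusion to equality, the dimension must increase by one, equivalently $(\bm{v}_{k+1})_{k+1}\neq 0$. Reading off the Hessenberg action of the second factor gives $(\bm{v}_{k+1})_{k+1} = (\nu_k a_{k+1,k} - \mu_k b_{k+1,k})\,w_k$, and the prefactor is nonzero exactly because $\varrho_k\neq\xi_k$, so the task reduces to $w_k\neq 0$. This is the main obstacle; it cannot be concluded from the inductive hypothesis $(\bm{v}_k)_k\neq 0$ alone, because the bottom row of $(\beta_k A - \alpha_k B)^{-1}$ restricted to $\mathcal{E}_k$ involves all coordinates of $\bm{v}_k|_{1:k}$ and can generically cancel against them. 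To bypass this I would invoke shift invariance to set all shifts equal to a common $\hat\varrho\notin\Xi\cup\Lambda$, use the commutativity of \Cref{lemma:prop_elementary_rationals}~I to rewrite $\bm{v}_k$ as a single rational function of $AB^{-1}$ applied to $\bm{e}_1$ (handling singular $B$ by a continuity argument in $\hat\varrho$, or by pre-deflating infinite eigenvalues), and then verify that the resulting expression for $w_k$ is a nonzero rational function of $\hat\varrho$ whose only singularities lie in $\Xi\cup\Lambda$. Evaluating at any admissible $\hat\varrho$ then forces $w_k\neq 0$ and the dimension count $\dim\mathcal{K}_{k+1}^{\text{rat}}=k+1$.

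The identity for $\mathcal{L}_{k}^{\text{rat}}$ follows by the same scheme after swapping the roles of the two factors in $N(\varrho,\xi) = (\beta A - \alpha B)^{-1}(\nu A - \mu B)$: the Hessenberg factor now acts first and sends $\mathcal{E}_i$ into $\mathcal{E}_{i+1}$, while the block-triangular inverse preserves $\mathcal{E}_{i+1}$ precisely because the pole tuple has been shifted to $(\xi_2,\ldots,\xi_k)$, which makes \Cref{lemma:properHessproperties}~II apply with leading block of the correct size $(i{+}1)\times(i{+}1)$. The analogous nondegeneracy analysis for the $(i{+}1)$th component then delivers equality.
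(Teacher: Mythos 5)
Your inclusion direction is fine: using \Cref{lemma:properHessproperties}~II for the factor $(\beta_k A-\alpha_k B)^{-1}$ and the Hessenberg structure of $(\nu_k A-\mu_k B)$ indeed gives $\mathcal{K}^{\text{rat}}_{k+1}\subseteq\mathcal{E}_{k+1}$, and your reduction of equality to $w_k\neq 0$ (with the nonzero prefactor $\nu_k a_{k+1,k}-\mu_k b_{k+1,k}$) is correct. But the step you label the ``main obstacle'' is exactly where the proof stops being a proof: you propose to rewrite $w_k$ as a rational function of a common shift $\hat\varrho$ and then ``verify'' that it is not identically zero, without giving any reason why it cannot vanish identically. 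That non-vanishing \emph{is} the substance of the theorem (it is equivalent to \Cref{corr:rkmatrix_triu}), so deferring it to an unspecified verification leaves a genuine gap. The paper's proof avoids the generating vector altogether: since $M(\varrho_k,\xi_k)\,\mathcal{E}_k\subseteq\mathcal{K}^{\text{rat}}_{k+1}$ by \Cref{lemma:prop_rksubspace}~III and the induction hypothesis, it suffices to find \emph{one} vector of $\mathcal{E}_k$ whose image has nonzero $(k{+}1)$st entry, and the choice $\bm{k}_k=(\beta_k A-\alpha_k B)\bm{e}_k\in\mathcal{E}_k$ makes the inverse cancel, giving explicitly $M(\varrho_k,\xi_k)\bm{k}_k=(\nu_k A-\mu_k B)\bm{e}_k$ with $(k{+}1)$st entry $\nu_k a_{k+1,k}-\mu_k b_{k+1,k}\neq 0$; then $\mathcal{E}_{k+1}\subseteq\mathcal{K}^{\text{rat}}_{k+1}$ and a dimension count finishes. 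Your argument needs either this trick or an actual proof of your non-vanishing claim.

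The second gap is the claim that the $\mathcal{L}^{\text{rat}}$ case is ``the same scheme with the factors swapped.'' It is not: because the inverse factor in $N(\varrho,\xi)$ is applied \emph{last}, its rows mix all coordinates and there is no explicit cancellation analogous to $\bm{k}_k$ (the paper states this explicitly). The paper instead argues non-constructively: if every $\bm{\ell}_k\in\mathcal{E}_k$ produced $\ell_{k+1}=0$, then the first $k$ columns of $(\beta_{k+1}A-\alpha_{k+1}B)$ and of $(\nu_{k+1}A-\mu_{k+1}B)$ would span the same subspace, contradicting parts III and IV of \Cref{lemma:properHessproperties} --- and part IV rests on properness condition~II (linear independence of the first columns and last rows), an ingredient that appears nowhere in your proposal. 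Without invoking that property, the ``analogous nondegeneracy analysis'' you appeal to has no basis, so the $\mathcal{L}^{\text{rat}}$ half of the statement is unproven as written.
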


\begin{proof}
We prove the results by induction on the subspace dimension.
The case $k=1$ is trivial for both statements.
To prove Equation~\eqref{eq:rksubspace_K_structure},
assume the result holds up to dimension $k \leq n{-}1$,
\begin{equation*}
\mathcal{K}_{k}^{\text{rat}}(A,B,\bm{e}_1,( \xi_1, \hdots, \xi_{k-1} ) ) =  \mathcal{E}_{k}.
\end{equation*} 
From the nestedness of rational Krylov subspaces, we have by induction,
\begin{equation*}
\mathcal{E}_k \subseteq \mathcal{K}_{k+1}^{\text{rat}}(A,B,\bm{e}_1, ( \xi_1, \hdots, \xi_{k})) .
\end{equation*}
It remains to be shown that
%\begin{equation*}
$\bm{e}_{k+1} \in \mathcal{K}_{k+1}^{\text{rat}}(A,B,\bm{e}_1,( \xi_1, \hdots, \xi_{k}))$.
%\end{equation*}
From Equation~\eqref{eq:increase_rksubspace} and the induction hypothesis we deduce,
\begin{equation}
\label{eq:ME_subset}
M(\varrho_k,\xi_k) \ \mathcal{E}_{k} \subseteq 
\mathcal{K}_{k+1}^{\text{rat}}(A,B,\bm{e}_1,( \xi_1, \hdots, \xi_{k} )),
\end{equation}
for $\varrho_k \notin \Xi$.
Now consider the vector $\bm{k}_k = \LinOp{\alpha_{k}}{\beta_{k}} \bm{e}_k$, with
$\alpha_{k} / \beta_{k} = \xi_{k}$. As $\beta_{k} A - \alpha_{k} B$ is an upper
Hessenberg matrix with a zero in position $(k{+}1,k)$, $\bm{k}_k \in \mathcal{E}_k$.
It follows that,
\begin{equation*}
\bm{k}_{k+1}
= M(\varrho_k,\xi_k) \; \bm{k}_k 
= \LinOp{\mu_{k}}{\nu_{k}} \LinOpInv{\alpha_{k}}{\beta_{k}} \; \bm{k}_k
= \LinOp{\mu_{k}}{\nu_{k}} \ \bm{e}_k,
\end{equation*}
is a vector in $\mathcal{E}_{k+1}$ with $k_{k+1} \neq 0$ and by Equation~\eqref{eq:ME_subset}, 
$\bm{k}_{k+1} \in \mathcal{K}_{k+1}^{\text{rat}}$.
This proves the first result.

In order to prove Equation~\eqref{eq:rksubspace_L_structure}, we can start
in a similar way. Assume the result holds up to dimension $k<n{-}1$\footnote{For
$\mathcal{K}^{\text{rat}}_k$, $k{+}1$ can be as large as $n$ since 
Equation~\eqref{eq:rksubspace_K_structure} goes up to $\xi_{k-1}$. For
$\mathcal{L}^{\text{rat}}_k$, $k{+}1$ is limited to $n{-}1$ as we don't want
to run out of poles.}.
We get from the
nestedness of rational Krylov subspaces and the induction hypothesis that,
\begin{equation*}
\mathcal{E}_k \subseteq \mathcal{L}_{k+1}^{\text{rat}}(A,B,\bm{e}_1, ( \xi_2, \hdots, \xi_{k+1} )).
\end{equation*}
From Equation~\eqref{eq:increase_rksubspace} and the induction hypothesis we deduce,
\begin{equation*}
N(\varrho_{k+1},\xi_{k+1})\ \mathcal{E}_{k} \subseteq \mathcal{L}_{k+1}^{\text{rat}}(A,B,\bm{e}_1, ( \xi_2, \hdots, \xi_{k+1} )),
\end{equation*}
for $\varrho_{k+1} \notin \Xi$.
To complete the proof, we need to show as before that there exists a pair of vectors
$\bm{\ell}_k,\bm{\ell}_{k+1}$,
with $\bm{\ell}_k \in \mathcal{E}_k$ and $\bm{\ell}_{k+1} \in \mathcal{E}_{k+1}$
whose $(k{+}1)$st element $\ell_{k+1} \neq 0$,
that are related as,
\begin{equation}
\label{eq:cond_vec}
\bm{\ell}_{k+1} = N(\varrho_{k+1},\xi_{k+1}) \; \bm{\ell}_k = \LinOpInv{\alpha_{k+1}}{\beta_{k+1}} \LinOp{\mu_{k+1}}{\nu_{k+1}} \; \bm{\ell}_k,
\end{equation}
An explicit construction is not
possible in this case. 
Nonetheless, by Equation~\eqref{eq:cond_vec} we have that 
$(\bm{\ell}_k,\bm{\ell}_{k+1})$ must satisfy
\begin{equation*}
\LinOp{\alpha_{k+1}}{\beta_{k+1}} \ \bm{\ell}_{k+1} = \LinOp{\mu_{k+1}}{\nu_{k+1}} \ \bm{\ell}_k.
\end{equation*}
From properties I.\ and II.\ of \Cref{lemma:properHessproperties}, 
we have that the matrix $\beta_{k+1} A - \alpha_{k+1} B$ is an upper Hessenberg
matrix that admits a block upper triangular partition with a leading block of size
$(k{+}1){\times}(k{+}1)$, while the matrix $\nu_{k+1} A - \mu_{k+1} B$ is a proper
upper Hessenberg matrix since the shift $\varrho_{k+1}$ is different from all the poles.
Observe that all vectors $\bm{\ell}_k \in \mathcal{E}_k$ would lead to a vector
$\bm{\ell}_{k+1}$ with element $\ell_{k{+}1}=0$ if and only if the first $k$ columns
of $\LinOp{\alpha_{k+1}}{\beta_{k+1}}$ would span the same subspace as the first $k$
columns of $\LinOp{\mu_{k+1}}{\nu_{k+1}}$.
It follows from property III.\ and IV.\ of \Cref{lemma:properHessproperties} that this
cannot be true.
We conclude that a valid pair $(\bm{\ell}_k,\bm{\ell}_{k+1})$ must exist.
\end{proof}

A direct corollary of the theorem considers the structure of rational Krylov matrices 
generated from proper Hessenberg pairs.

\begin{corollary}
\label{corr:rkmatrix_triu}
Let $(A,B) \in \mathbb{C}^{n{\times}n}$ be a proper Hessenberg pair with poles
$\Xi = ( \xi_1, \hdots, \xi_{n-1} )$ not in the spectrum.
%, i.e., $\Xi \cap \Lambda = \varnothing$.
Then for $k$ from $1$ to $n$ and any $\Rho_k$ different from the spectrum and poles,
 $K_{k}^{\text{rat}}(A,B,\bm{e}_1, ( \xi_1, \hdots, \xi_{k-1} ),\Rho_k)$
and for $k$ from $1$ to $n{-}1$,
 $L_{k}^{\text{rat}}(A,B,\bm{e}_1, ( \xi_2, \hdots, \xi_k ),\Rho_k)$ 
are upper triangular, with non-vanishing diagonal elements.
\end{corollary}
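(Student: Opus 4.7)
The plan is to read this off almost immediately from \Cref{thm:rksubspace_structure}, using the fact that nestedness forces triangularity and strict growth of the subspaces forces the diagonal to be nonzero. First I would note that for any index $j$ from $1$ to $k$, the first $j$ columns of $K_{k}^{\text{rat}}(A,B,\bm{e}_1, ( \xi_1, \hdots, \xi_{k-1} ),\Rho_k)$ are exactly the columns of the smaller rational Krylov matrix $K_{j}^{\text{rat}}(A,B,\bm{e}_1, ( \xi_1, \hdots, \xi_{j-1} ),\Rho_{j})$ (the shift and pole tuples only differ in their tails, which are not needed to form these first $j$ columns). By \Cref{thm:rksubspace_structure}, the span of those first $j$ columns is $\mathcal{E}_j$. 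In particular, the $j$-th column lies in $\mathcal{E}_j$, which is precisely the statement that the matrix is upper triangular.

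Next, to see that the diagonal is non-vanishing, I would use that the span grows by exactly one dimension at each step: the first $j-1$ columns span $\mathcal{E}_{j-1}$ while the first $j$ columns span $\mathcal{E}_{j}$. Since the $j$-th column lies in $\mathcal{E}_j$ but its projection on $\mathcal{E}_{j-1}^{\perp}$ must be nonzero, its $j$-th coordinate $\bm{e}_j^{*}(\cdot)$ cannot be zero. This $j$-th coordinate is precisely the $(j,j)$ entry of the rational Krylov matrix.

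The argument for $L_{k}^{\text{rat}}(A,B,\bm{e}_1, ( \xi_2, \hdots, \xi_k ),\Rho_k)$ is identical, using \Cref{eq:rksubspace_L_structure} instead of \Cref{eq:rksubspace_K_structure}; here the range of admissible $k$ is only up to $n{-}1$ because one needs $\xi_k$ to exist. I do not anticipate any real obstacle — the only subtlety to double-check is that prefixes of rational Krylov matrices are themselves rational Krylov matrices with shorter shift and pole tuples, but this is immediate from \Cref{def:ratkrylmat}. Consequently the corollary is essentially a restatement of \Cref{thm:rksubspace_structure} in matrix form.
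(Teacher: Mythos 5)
Your proposal is correct and follows exactly the route the paper intends: the paper states \Cref{corr:rkmatrix_triu} as a direct consequence of \Cref{thm:rksubspace_structure} without further proof, and your observations (that leading column blocks of the rational Krylov matrices are themselves rational Krylov matrices with truncated pole and shift tuples, that their column space being $\mathcal{E}_j$ gives upper triangularity, and that the one-dimensional growth of the spans forces the $(j,j)$ entries to be nonzero) supply precisely the omitted details. No gaps.
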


\subsection{Proof of the implicit Q theorem.}
\label{ssec:proofimplQ}

We are  ready to  prove \Cref{thm:implQ}.
\begin{proof}
Choose a tuple of $n{-}1$ shifts $\Rho_{n{-}1}$ different from the poles.
\Cref{corr:rkmatrix_triu} states that
$K_{n}^{\text{rat}}(\hat{A}, \hat{B}, \bm{e}_1, \Xi_{n{-}1},\Rho_{n{-}1})$ and
$K_{n}^{\text{rat}}(\check{A}, \check{B}, \bm{e}_1, \Xi_{n{-}1},\Rho_{n{-}1})$ 
are $n{\times}n$ upper triangular matrices.
The elementary rational function $M(\varrho,\xi)$ is transformed via $\hat{Q}$ and $\check{Q}$ to 
$\hat{M}(\varrho,\xi) = \hat{Q}^{*} \; M(\varrho,\xi) \; \hat{Q}$ and
$\check{M}(\varrho,\xi) = \check{Q}^{*} \; M(\varrho,\xi) \; \check{Q}$.

It follows that,%\footnote{We use Matlab notation to denote by $\Xi_{i:j}$ the subset of
  %$\Xi$ containing elements  $i$ up to and including $j$.},
\begin{align*}
& \hat{Q} \, K_{n}^{\text{rat}}(\hat{A}, \hat{B}, \bm{e}_1, \Xi_{n{-}1}, \Rho_{n{-}1}) \\
= \ & \hat{Q} \, \left[ \bm{e}_1 \ \hat{M}(\varrho_1,\xi_1) \; \bm{e}_1 
\ \hdots \
\left( \prod_{i=1}^{n-1} \hat{M}(\varrho_i,\xi_i) \right) \; \bm{e}_1 \right] \\
= \ & \hat{Q} \, \left[ \bm{e}_1 \ \hat{Q}^* M(\varrho_1,\xi_1) \hat{Q} \; \bm{e}_1 
\ \hdots \
\hat{Q}^* \left( \prod_{i=1}^{n-1} M(\varrho_i,\xi_i) \right) \hat{Q} \; \bm{e}_1
\right] \\
%
%\displaybreak
= \ & \phantom{Q} \,  \left[ \bm{\hat{q}}_1 \ M(\varrho_1,\xi_1) \; \bm{\hat{q}}_1
\ \hdots \
\left( \prod_{i=1}^{n-1} M(\varrho_i,\xi_i) \right) \; \bm{\hat{q}}_1 \right] \\
= \ & \sigma \;  \left[ \bm{\check{q}}_1 \ M(\varrho_1,\xi_1) \; \bm{\check{q}}_1
\ \hdots \
\left( \prod_{i=1}^{n-1} M(\varrho_i,\xi_i) \right) \; \bm{\check{q}}_1 \right] \\
= \ & \sigma \check{Q} \, K_{n}^{\text{rat}}(\check{A},\check{B}, \bm{e}_1, \Xi_{n{-}1},\Rho_{n{-}1}).
\end{align*}
Since the upper triangular matrices $K^{\text{rat}}_n$ are nonsingular, the uniqueness
of the QR factorization implies the existence of a unitary diagonal matrix
$D_1$ such that $\hat{Q} = \check{Q}D_1$. 

It remains to prove that a similar relation
holds for the matrices $\hat{Z}$ and $\check{Z}$.
Let us first prove that $\hat{Z}$
and $\check{Z}$ also share a first column up to unimodular scaling.
From the relations
%\begin{equation*}
%\begin{split}
$(\beta_1 \hat{A} - \alpha_1 \hat{B})= \hat{Q}^* \LinOp{\alpha_1}{\beta_1} \hat{Z}$ 
%\quad \text{and,} \quad 
and
$(\beta_1 \check{A} - \alpha_1 \check{B})= \check{Q}^* \LinOp{\alpha_1}{\beta_1} \check{Z},$
%\end{split},
%\end{equation*}
with $\xi_1=\alpha_1/\beta_1$,
it follows that,
\begin{equation}
\label{eq:z1}
\begin{split}
& \hat{\bm{z}}_1 = \hat{Z}\bm{e}_1 = \LinOpInv{\alpha_1}{\beta_1} \hat{Q} (\beta_1 \hat{A} - \alpha_1 \hat{B}) \bm{e}_1, \\
& \check{\bm{z}}_1 = \check{Z}\bm{e}_1 =  \LinOpInv{\alpha_1}{\beta_1} \check{Q} (\beta_1 \check{A} - \alpha_1 \check{B}) \bm{e}_1.
\end{split}
\end{equation}
Since both $(\beta_1 \hat{A} - \alpha_1 \hat{B}) \bm{e}_1$ and $(\beta_1 \check{A} - \alpha_1 \check{B}) \bm{e}_1$ 
reduce to a scalar multiple of $\bm{e}_1$ and $\hat{Q}\bm{e}_1 = \sigma \check{Q}\bm{e}_1$ we get 
%\begin{equation}
%\label{eq:z1_2}
$\check{\bm{z}}_1 = \tilde{\sigma} \hat{\bm{z}}_1$.
% = \gamma \, \LinOpInv{\alpha_1}{\beta_1}\check{\bm{q}}_1.$
%\end{equation}
Via similar reasoning as before the following two QR factorizations are equal,
\begin{align*}
\hat{Z} \, L_{n{-}1}^{\text{rat}}(\hat{A}, \hat{B}, \bm{e}_1, \Xi_{2:n{-}1}, \Rho_{2{:}n{-}1}) = 
\tilde{\sigma} \check{Z} L_{n{-}1}^{\text{rat}}(\check{A}, \check{B}, \bm{e}_1, \Xi_{2:n{-}1},\Rho_{2{:}n{-}1}).
\end{align*}
%with $\Xi_{2:n{-}1} = ( \xi_2, \hdots, \xi_{n{-}1} )$. 
In this case the $L_{n{-}1}$ matrices are of size $n{\times}n{-}1$.
Uniqueness of the $QR$ factorization implies essential uniqueness of the
first $n{-}1$ columns of $\hat{Z}$ and $\check{Z}$. Nonetheless also the
last column of $\hat{Z}$ and $\check{Z}$ are essentially the same as they 
are orthogonal to the first $n{-}1$ columns. We conclude that
$\hat{Z} = \check{Z} D_2$, with $D_2$
a unitary diagonal matrix.
\end{proof}

When the Hessenberg pair is not proper, uniqueness can only be guaranteed up to the pole
that causes the problem. This is similar to the Hessenberg case. In practice
this is in fact good news as a breakdown signals a deflation.
% Note that $Q$ is completely determined by $\bm{q}_1$ and the poles $\xi_1,
% \hdots,\xi_{n-2}$, while $Z$ is determined by $\bm{q}_1$ and all $n{-}1$ poles:
% $\xi_{1}$ defines $\bm{z}_1$ in equation \eqref{eq:z1} and $\xi_2, \hdots, \xi_{n{-}1}$
% define $Z$ via the $QR$ factorization.

%%%%%%%%%%%%%%%%%%%%%%%%%%%%%%%%%%%%%%%%%%%%%%%%%%%%%%%%%%%%%%%%%%%%%
%% SECTION: SUBSPACE ITERATION
%%%%%%%%%%%%%%%%%%%%%%%%%%%%%%%%%%%%%%%%%%%%%%%%%%%%%%%%%%%%%%%%%%%%%

% VERSION 3: Theory in terms of a pencil and operators (T,S)
\section{Implicit rational subspace iteration.}
\label{sec:subspaceiter}
\label{ssec:subspaceiter}
\label{sec:subspace:iteration}

It is well-known that Francis' $QR$ algorithm \cite{Fra61,Fra62} effects nested subspace iteration
with a change of coordinate system driven by polynomial Krylov subspaces \cite[Theorem 6.3]{Vandebril2012},
\cite[p.396]{Watkins2011}.  
This result is generalized in this section for the rational QZ method.

Starting with a proper Hessenberg pair $(A,B)$ with 
$\Xi=( \xi_1, \hdots, \xi_{n-1})$,
a single iteration of the rational QZ method with shift $\varrho$ and new pole 
$\hat{\xi}_{n-1}$
results in a new proper Hessenberg pair,
\begin{align*}
(\hat{A},\hat{B}) = Q^* \, (A,B) \, Z,
\end{align*}
with $\hat{\Xi} = ( \xi_2, \hdots, \xi_{n-1}, \hat{\xi}_{n-1} )$.
This equivalence transformation simultaneously performs two similarity transformations
on the matrices,
\begin{equation}
\label{eq:2qr_iters}
%\begin{split}
\hat{M}(\varrho,\xi) = Q^* \, M(\varrho,\xi) \, Q \quad\mbox{ and }\quad
\hat{N}(\varrho,\xi) = Z^* \, N(\varrho,\xi) \, Z,
%\end{split}
\end{equation}
for all $\varrho$ and $\xi$.

The following theorem formalizes the convergence behavior of the RQZ method.

\begin{theorem}
\label{thm:rational_subspace_it}
Consider a single RQZ step %linking two proper Hessenberg pairs,
$(\hat{A},\hat{B}) = Q^* \, (A,B) \, Z$, with shift $\varrho$,
pole tuple ${\Xi} = ( \xi_1, \hdots, \xi_{n-1})$ prior to the RQZ step,
and $\hat{\Xi} = ( \xi_2, \hdots, \xi_{n-1}, \hat{\xi}_{n-1} )$ afterwards.
Assume all poles different from the eigenvalues, and the shift $\varrho$ different from all
eigenvalues and poles.
% the parameters satisfy $\varrho \notin \Lambda$, $\Xi \cap \Lambda = \varnothing$, 
% $\hat{\Xi} \cap \Lambda = \varnothing$, $\varrho \notin \Xi$, and $\varrho \notin \hat{\Xi}$. 
%
For $k = 1, \hdots, n{-}1$, this effects subspace iteration  driven by $M(\varrho,\xi_k)$
and $N(\varrho,\xi_{k+1})$, we get:
\begin{equation}
\label{eq:subsp_iter}
\mathcal{R}(Q_{:,1:k}) = M(\varrho,\xi_{k}) \, \mathcal{E}_k,
\qquad \text{and} \qquad
\mathcal{R}(Z_{:,1:k}) = N(\varrho,\xi_{k+1}) \, \mathcal{E}_k,
\end{equation}
with, $\xi_n = \hat{\xi}_{n-1}$.
The change of coordinate system  %equation \cref{eq:2qr_iters} 
maps both $\mathcal{R}(Q_{:,1:k})$ and $\mathcal{R}(Z_{:,1:k})$ back to $\mathcal{E}_k$.
\end{theorem}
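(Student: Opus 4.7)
The plan is to reduce each identity to an equality between rational Krylov subspaces of $(A,B)$ and then invoke Lemma~\ref{lemma:prop_rksubspace} property~II together with commutativity of the elementary rational functions from Lemma~\ref{lemma:prop_elementary_rationals}.

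First I would identify the starting columns $Q\bm{e}_1$ and $Z\bm{e}_1$. Since the initialization of the RQZ step replaces the first pole $\xi_1$ by the shift $\varrho$ via~\eqref{eq:intro_single_pole}, we have $Q\bm{e}_1 = c_1\,M(\varrho,\xi_1)\bm{e}_1$ for some nonzero $c_1$. The first column of $Z$ takes a bit more work: starting from the equivalence $(\beta_2 A - \alpha_2 B)Z = Q(\beta_2\hat{A} - \alpha_2\hat{B})$ with $\xi_2 = \alpha_2/\beta_2 = \hat\xi_1$ and noting that $(\beta_2\hat{A} - \alpha_2\hat{B})\bm{e}_1$ is proportional to $\bm{e}_1$ (its $(2,1)$ entry vanishes because $\hat\xi_1 = \xi_2$), a short manipulation--absorbing $(\beta_1 A - \alpha_1 B)^{-1}\bm{e}_1 \propto \bm{e}_1$ on the way--yields $Z\bm{e}_1 = c_2\,N(\varrho,\xi_2)\bm{e}_1$. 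This mirrors the derivation of $\bm{z}_1$ in the proof of Theorem~\ref{thm:implQ}.

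Next I would apply Theorem~\ref{thm:rksubspace_structure} to the transformed pair $(\hat{A},\hat{B})$ with poles $\hat\Xi$ to get $\mathcal{E}_k = \mathcal{K}_k^{\text{rat}}(\hat{A},\hat{B},\bm{e}_1,(\xi_2,\ldots,\xi_k))$ and $\mathcal{E}_k = \mathcal{L}_k^{\text{rat}}(\hat{A},\hat{B},\bm{e}_1,(\xi_3,\ldots,\xi_{k+1}))$. Conjugating by $Q$ and $Z$ using the similarities~\eqref{eq:2qr_iters} and the first-column formulas above gives $Q\mathcal{E}_k = \mathcal{K}_k^{\text{rat}}(A,B,M(\varrho,\xi_1)\bm{e}_1,(\xi_2,\ldots,\xi_k))$ and $Z\mathcal{E}_k = \mathcal{L}_k^{\text{rat}}(A,B,N(\varrho,\xi_2)\bm{e}_1,(\xi_3,\ldots,\xi_{k+1}))$.

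To finish, apply property~II of Lemma~\ref{lemma:prop_rksubspace} with $\hat\varrho = \varrho$ and a convenient $\check\varrho$ to both sides. The target $M(\varrho,\xi_k)\mathcal{E}_k = M(\varrho,\xi_k)\mathcal{K}_k^{\text{rat}}(A,B,\bm{e}_1,(\xi_1,\ldots,\xi_{k-1}))$ rewrites as $\prod_{i=1}^{k}M(\varrho,\xi_i)\,\mathcal{K}_k(M(\check\varrho,\varrho),\bm{e}_1)$. The expression for $Q\mathcal{E}_k$ initially becomes $\prod_{i=2}^{k}M(\varrho,\xi_i)\,\mathcal{K}_k(M(\check\varrho,\varrho),M(\varrho,\xi_1)\bm{e}_1)$; the crucial step is to use commutativity of $M(\check\varrho,\varrho)$ with $M(\varrho,\xi_1)$ to pull the modified starting vector out of the Krylov subspace, i.e.\ $\mathcal{K}_k(M(\check\varrho,\varrho),M(\varrho,\xi_1)\bm{e}_1) = M(\varrho,\xi_1)\mathcal{K}_k(M(\check\varrho,\varrho),\bm{e}_1)$, after which both sides coincide. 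The identical manipulation with $N$-functions handles the $Z$-case. The closing ``change of coordinate system'' assertion is immediate since $\mathcal{R}(Q_{:,1:k}) = Q\mathcal{E}_k$ gives $Q^*\mathcal{R}(Q_{:,1:k}) = \mathcal{E}_k$, and likewise for $Z$.

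The main obstacle is the derivation of $Z\bm{e}_1 \propto N(\varrho,\xi_2)\bm{e}_1$ in the first step: although the initialization rotation acts only from the left, the very first pole swap in the chasing phase uses a column rotation on columns $(1,2)$, so $Z\bm{e}_1$ is genuinely nontrivial; identifying the scalar factor correctly requires unwinding one layer of the implicit-$Q$ argument. Once this is in hand, the remaining Krylov-subspace manipulations are essentially mechanical applications of the lemmas from Section~\ref{ssec:ratkrylmat}.
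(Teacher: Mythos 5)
Your proposal is correct and follows essentially the same route as the paper: it uses $Q\bm{e}_1 \propto M(\varrho,\xi_1)\bm{e}_1$ and derives $Z\bm{e}_1 \propto N(\varrho,\xi_2)\bm{e}_1$ from the equivalence exactly as the paper does, then combines Theorem~\ref{thm:rksubspace_structure} applied to both $(\hat{A},\hat{B})$ and $(A,B)$, property~II of Lemma~\ref{lemma:prop_rksubspace}, the similarities~\eqref{eq:2qr_iters}, and commutativity to pull $M(\varrho,\xi_1)$ (resp. $N(\varrho,\xi_2)$) out of the Krylov subspace. The paper's proof is the same chain of equalities, so no gap to report.
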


\begin{proof}
We make use of the properties of \Cref{lemma:prop_elementary_rationals}, \Cref{lemma:prop_rksubspace}, 
\Cref{thm:rksubspace_structure}, Equation~\eqref{eq:2qr_iters} and
$\bm{q}_1 = \gamma M(\varrho,\xi_1) \, \bm{e}_1$ (Equation~\eqref{eq:intro_single_pole}).
We get,
\begin{equation*}
\begin{split}
\mathcal{R}(Q_{:,1:k}) 
& = Q \, \mathcal{E}_k  
= Q \, \mathcal{K}^{\text{rat}}_{k}(\hat{A},\hat{B},\bm{e}_1,\Xi_{2:k}) \\
& = Q \, \prod_{i=2}^{k} \hat{M}(\varrho,\xi_i) \cdot \mathcal{K}_k(\hat{M}(\check{\varrho},\varrho),\bm{e}_1) \\
& = \prod_{i=2}^{k} M(\varrho,\xi_i) \cdot \mathcal{K}_k(M(\check{\varrho},\varrho),Q\bm{e}_1) \\
& = \prod_{i=2}^{k} M(\varrho,\xi_i) \cdot \mathcal{K}_k(M(\check{\varrho},\varrho),M(\varrho,\xi_1) \bm{e}_1)\\
& = M(\varrho,\xi_k) \prod_{i=1}^{k-1} M(\varrho,\xi_i) \cdot \mathcal{K}_k(M(\check{\varrho},\varrho),\bm{e}_1) \\
& = M(\varrho,\xi_k) \, \mathcal{E}_k .
\end{split}
\end{equation*}
The first equality is clear, the second equality uses \Cref{thm:rksubspace_structure}.
The third equality applies part II of \Cref{lemma:prop_rksubspace}.
The fourth equality relies on Equation \eqref{eq:2qr_iters} to change from $\hat{M}$ to $M$. The fifth equality
uses the expression for $\bm{q}_1$, the sixth uses the commutativity property, and the last equality again applies \Cref{lemma:prop_rksubspace}
and \Cref{thm:rksubspace_structure}.

The second result follows a similar reasoning.
The only difference is the relation between $\bm{z}_1$ and $\bm{e}_1$.
Starting from the same argument as in Equation \eqref{eq:z1} we get, for some constants $\gamma,\check{\gamma}$ and $\tilde{\gamma}$,
\begin{align*}
\bm{z}_1 
= \ & {\gamma} \LinOpInv{\alpha_2}{\beta_2} \bm{q}_1 
= \ {\check{\gamma}} \, \LinOpInv{\alpha_2}{\beta_2} M(\varrho,\xi_1) \, \bm{e}_1
= \ {\tilde{\gamma}} \, N(\varrho,\xi_2) \, \bm{e}_1.
\end{align*}
\end{proof}

A single shifted RQZ step will
execute a QR step with shift $\varrho$ on the entire space simultaneously with RQ steps
having shifts $\xi_i$ on selected subspaces. 
The shift $\varrho$ is rapidly moving from top
to bottom and thus affects all subspaces. 
The poles on the other hand are slowly moving
upwards, one row during each step, and as such do not act on all subspaces in a single
RQZ step. 
The shifts will rapidly initiate convergence at
the bottom, the poles slowly push converged eigenvalues to the top. This is another
justification of why, in the
classical QZ algorithm, the zero eigenvalues in $B$ appear at the top: they are
pushed there by the poles at infinity. Moreover, it is also clear from the analysis
that picking a shift equal to a pole will lead to
cancellation in some of the factors thereby slowing down convergence.

Note that in the formulation of \Cref{thm:rational_subspace_it} the shift and poles
are assumed to be different from the eigenvalues of the matrix pair. This is imposed to
ensure that the required inverses exist.
However, in practical implementations, these parameters will typically converge towards an eigenvalue.
This is in fact a desirable situation as it will lead to deflations.

In the QZ algorithm \cite{Moler1973}, all poles are at $\infty$ and the two driving functions 
reduce to $M(\varrho,\infty)$ and $N(\varrho,\infty)$ which is equivalent to $AB^{-1} - \varrho I$
and $B^{-1}A - \varrho I$.
In the RQZ method, the poles can be chosen freely and as such they can be utilized to influence 
the convergence of the method as was illustrated in the numerical experiments of \Cref{subsec:example1,subsec:example2}.
Note that, as the poles only shift one row up during every RQZ step, 
it takes $n{-}1$ iterations before a pole has moved from the bottom to the top and has
influenced all
vectors in the subspace iteration.

To further clarify the result of \Cref{thm:rational_subspace_it} consider the simplified case
where all the poles of the Hessenberg pair are equal to same value $\xi$ different
from the eigenvalues of $(A,B)$.
Assume that the RQZ algorithm is applied $s$ times on this proper Hessenberg pair
with the same shift $\varrho$. At the end of each RQZ step the last pole is again restored
to $\xi$.
Then the subspace iterations, as considered from the initial pair, are given by,
\begin{equation*}
%\begin{split}
Q: \; \mathcal{E}_k \rightarrow M(\varrho,\xi)^s \mathcal{E}_k,
\quad \text{and} \quad
Z: \; \mathcal{E}_k \rightarrow N(\varrho,\xi)^s \mathcal{E}_k.
%\end{split}
\end{equation*}
Denote $q(z) = (z - \varrho)/(z - \xi)$ and let $\lambda_1, \hdots, \lambda_n$
be the eigenvalues of the pair $(A,B)$, so that $q(\lambda_i)^s$ is the rational
filter that is implicitly applied during these $s$ iterations to $\lambda_i$.
Assume the eigenvalues are ordered such that,
\begin{equation*}
|q(\lambda_1)^s| \leq |q(\lambda_2)^s| \leq \hdots \leq |q(\lambda_{n-1})^s| \leq |q(\lambda_n)^s|,
\end{equation*}
then the convergence factor of an eigenvalue at the end of the Hessenberg pencil
is given by $|q(\lambda_1)^s|/|q(\lambda_2)^s|$, while the convergence factor
at the top of the Hessenberg pencil is given by $|q(\lambda_{n-1})^s|/|q(\lambda_n)^s|$.
As such, a good choice of both poles and shifts can accelerate convergence and
lead to deflations.

As an example consider a problem of size $11$ with eigenvalues located
on the unit circle in the complex plane. \Cref{fig:ssiter} shows the absolute
value of the rational filter after $s{=}2$ iterations for two different choices
for the rational function $q$.
Figure~\subref*{subfig:ssiterpoly} shows the filter, $q_{\infty}(z)^2$,
with shift $\varrho = -0.95$
and all the poles at $\infty$. This situation corresponds to the QZ method
applied twice with the same shift to a Hessenberg, triangular pair.
The shift $\varrho$ is located close to the eigenvalue $\lambda_1{=}-1$ such that
$|q_{\infty}(\lambda_1)^2| = 2.5 \cdot 10^{-3}$ is the minimal value of 
the filter over all eigenvalues.
The convergence factor of $\lambda_1$ at the end of the pencil is approximately
$8.22 \cdot 10^{-3}$. At the top of the pencil there is no convergence in this
case as $|q(\lambda_{n-1})^2|/|q(\lambda_n)^2| = 1$.
Figure~\subref*{subfig2:ssiterrat} shows the same experiment but this time the poles are located
at $\xi = 0.1{+}1i$ which is in the vicinity of another eigenvalue. This
situation corresponds to the RQZ method applied twice with the same shift to a Hessenberg
pair with $\Xi = (\xi, \hdots, \xi)$.
The rational filter, $q_{\xi}(z)^2$, leads to a convergence factor of $\lambda_1$ at the 
end of the pencil of approximately $1.21 \cdot 10^{-2}$.
The convergence of $\lambda_1$ at the end of the pencil is slower with $q_{\xi}^2$
compared to $q_{\infty}^2$.
However, $q_{\xi}^2$ will also lead to convergence at the top of the pencil as the
convergence factor is $|q(\lambda_{n-1})^2|/|q(\lambda_n)^2|
\approx 7.46 \cdot 10^{-3}.$
We observe that using $q_{\xi}$ leads to convergence of another eigenvalue, where
$q_{\infty}$ does not.

\begin{figure}[htp]
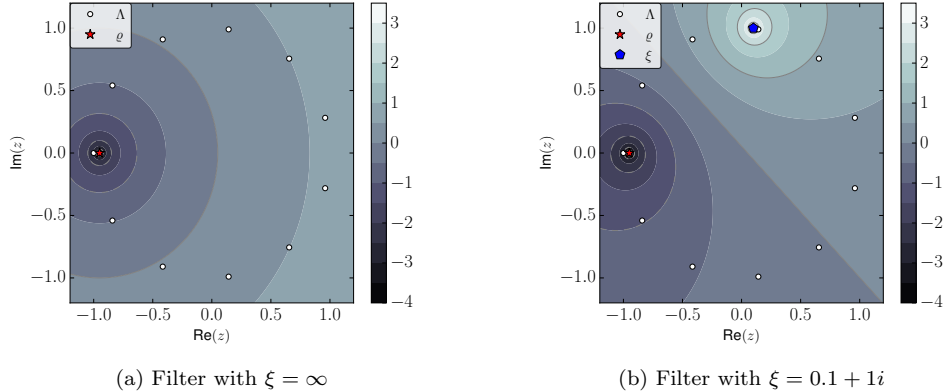

     \subfloat[Filter with $\xi = \infty$\label{subfig:ssiterpoly}]{%
       \includegraphics[width=0.45\textwidth]{fig/subspace_iter/polyfilt.pdf}
     }
     \hfill
     \subfloat[Filter with $\xi = 0.1 + 1i$\label{subfig2:ssiterrat}]{%
       \includegraphics[width=0.45\textwidth]{fig/subspace_iter/ratfilt.pdf}
     }
     \caption{Logarithm of the absolute value of the rational filter, $|q(\lambda_i)^s|$, after
     $s=2$ iterations with $\varrho = -0.95$, and $\xi$ either at $\infty$ or
     at $0.1 + 1i$. The eigenvalues $\lambda_i$ are shown with cirlces, the 
     shift $\varrho$ is indicated with a star, the pole with a pentagon. Darker
     regions agree with convergence at the end of the pencil and lighter regions 
     with convergence at the top of the pencil.}
     \label{fig:ssiter}
\end{figure}

It is clear that both the shifts and the poles can accelerate the convergence
but they do influence each other.

When the shifts are changed in every iteration and the poles of the Hessenberg
pair are not the same then the filter $q$ becomes dependent on the index $k$ and will be a
product of terms with different shifts,
\begin{equation}
\label{eq:ratfilter}
q_k(\lambda) = \prod_{i=1}^{s} (\lambda - \varrho_i)/(\lambda - \xi_k^{(i)}),
\end{equation}
with $\xi_k^{(i)}$ the pole at iteration $i$ in position $k$ (or $k{+}1$) for $Q$ (or
$Z$) as shown in \Cref{thm:rational_subspace_it}.

Provided a good choice of shifts and poles is made during repeated application of the RQZ
algorithm the pair $(A,B)$ will converge to a pair of upper triangular matrices.

%%%%%%%%%%%%%%%%%%%%%%%%%%%%%%%%%%%%%%%%%%%%%%%%%%%%%%%%%%%%%%%%%%%%%
%% SECTION: FILTERING RATIONAL KRYLOV
%%%%%%%%%%%%%%%%%%%%%%%%%%%%%%%%%%%%%%%%%%%%%%%%%%%%%%%%%%%%%%%%%%%%%
\section{Filtering rational Krylov subspaces.}
\label{sec:RK}

In the last part, we will apply the concept of the RQZ method within
the rational Krylov (RK) method \cite{Ruhe1984,Ruhe1994a, Ruhe1994b,
Ruhe1998a} to filter and restart the RK method.
The RK method is an iterative method  applicable for, for example, computing
a select subset of eigenvalues of large-scale eigenvalue problems.
Our formulation is in terms of a large-scale complex-valued matrix pair
$(A,B)$ of dimension $N{\times}N$.

Starting from a regular pair $(A,B)$, a nonzero vector $\bm{v} \in \CC^{N}$,
and a tuple of poles $\Xi = ( \xi_1, \hdots, \xi_k )$ different from the
spectrum, %  $\Xi \cap \Lambda = \varnothing$,
the rational Krylov method iteratively constructs an orthonormal basis 
$V_{k+1} \in \mathbb{C}^{N{\times}k+1}$ of the rational Krylov
subspace $\mathcal{K}^{\text{rat}}_{k+1}(A,B,\bm{v},\Xi)$.

It also constructs a $(k{+}1){\times}k$ recurrence matrix pair $(\underline{H}_k, \underline{G}_k)$
in Hessenberg form. This RK Hessenberg pair contains the poles that are used in the
RK method as its subdiagonal elements: $\xi_i = h_{i+1,i}/g_{i+1,i}$, for $i$ from $1$
to $k$. 
This is similar to the square Hessenberg pairs that are used in the first part
of this paper.
As long as the RK method does not break down, the pair $(\underline{H}_k, \underline{G}_k)$
can be considered as proper according to two out of three conditions of \Cref{def:ratHess}.
The third condition concerning the linear independence of the last row of the pair 
does not hold for the rectangular RK Hessenberg pencil.
If the other two conditions are satisfied, we nonetheless say that $(\underline{H}_k, \underline{G}_k)$
is a proper RK Hessenberg pair.

The RK recurrence relation,
\begin{equation}
\label{eq:RK_recurrence}
A \, V_{k+1} \, \underline{G}_k = B \, V_{k+1} \, \underline{H}_k,
\end{equation}
holds throughout the RK method.
We refer the interested reader to \cite{Ruhe1998a} for further algorithmic details
on the iterative scheme.

The RK method generalizes Arnoldi's method \cite{Arnoldi1951} which generates 
an orthonormal basis for a standard Krylov subspace $\mathcal{K}_{k+1}(A,\bm{v})$.
These Krylov methods have a growing orthogonalization cost and growing memory requirements
with increasing subspace dimension. To overcome this one could apply implicit filtering and
restart.

Sorensen \cite{Sorensen1992} applied Francis' QR algorithm to filter a standard Krylov subspace
and implicitly restart the Arnoldi iteration.
The implicit QR algorithm can be applied to restart Arnoldi's method because they are both based
on the Hessenberg matrix structure.
As a generalization, the RQZ algorithm can be applied to filter a rational Krylov subspace
and restart the rational Krylov iteration because both methods use the structure of Hessenberg pairs.
Berljafa \& G\"uttel \cite[section 4.3]{Berljafa2015} already proposed this technique of
changing and swapping the poles in the RK method as a way to implicitly filter a rational
Krylov subspace.
The first algorithm to apply an implicit filter in the RK method is due to
De Samblanx, Meerbergen, and Bultheel \cite{DeSamblanx1997}. 
However, their method relied on an explicit QZ algorithm
which is quite costly and prone to numerical inaccuracies.

To filter a rational Krylov subspace we can thus use the concept of the RQZ method.
The procedure is summarized in \Cref{fig:rkfilt}. The initial situation of the
RK Hessenberg is shown in pane I on the left. In pane II, the first pole $\xi_1$ is
changed to a shift $\varrho$ by computing a unitary transformation $Q$ such that,
\begin{equation}
\bm{q}_1 = 
\check{\gamma} (\underline{H}_k - \varrho \underline{G}_k)(\underline{H}_k{-}\xi_1 \, \underline{G}_k)^{\dagger} \bm{e}_1
= \hat{\gamma} (\underline{H}_k - \varrho \underline{G}_k)  \bm{e}_1.
\end{equation}
The principle is the same as described in \Cref{subsec:manipulate}, the only
difference is that the inverse is replaced with the Moore-Penrose \emph{pseudoinverse} 
$(\underline{H}_k{-}\xi_1 \, \underline{G}_k)^{\dagger}$.
%Where the pseudoinverse
%is defined based on the SVD of the matrix $M$ as $M^{\dagger} = V \Sigma^{\dagger} U^*$ with 
%$\Sigma^{\dagger} = \text{diag}(\sigma_1^{-1},\hdots,\sigma_r^{-1}, 0,\hdots,0)$ if the rank of $M$
%is $r$. 

It is well-known \cite{b037} that $\bm{x}_{LS} = (\underline{H}_k{-}\xi_1 \, \underline{G}_k)^{\dagger} \bm{b}$ is the least squares solution
of minimal norm $\| \bm{x} \|_2$.
As $\|\gamma \bm{e}_1 -  (\underline{H}_k - \xi_1 \underline{G}_k)  \bm{e}_1\|_2 = 0$ when
$\gamma = h_{11} - \xi_1 g_{11}$, we conclude that,
\begin{equation}
\label{eq:LSsol}
(\underline{H}_k{-}\xi_1 \, \underline{G}_k)^{\dagger} \bm{e}_1 = \gamma \bm{e}_1.
\end{equation}

%Note that $\underline{H}_k{-}\xi_1 \, \underline{G}_k$
%is a $(k{+}1){\times}k$ Hessenberg matrix of maximal rank $k$ which follows from
%\Cref{lemma:properHessproperties}.IV. The first column of $\underline{H}_k{-}\xi_1 \, \underline{G}_k$
%is a scalar multiple of $\bm{e}_1$. It follows that,
%\begin{equation}
%\underline{H}_k{-}\xi_1 \, \underline{G}_k =
%\left[ \begin{array}{cc|c}
%1 &         &  \multirow{2}{*}{$\bm{q}_{k+1}$} \\
%  & Q_{k-1} & 
%\end{array} \right]
%\begin{bmatrix}
%R_k \\
%0
%\end{bmatrix}
%\end{equation}
%is the QR factorization of $\underline{H}_k{-}\xi_1 \, \underline{G}_k$.
%This now shows that,
%$$
%(\underline{H}_k{-}\xi_1 \, \underline{G}_k)^{\dagger} \bm{e}_1 
%= R_k^{-1}
%\begin{bmatrix}
%1 &        \\
%  & Q_{k-1}^*
%\end{bmatrix}
%\bm{e}_1
%= \gamma \bm{e}_1.
%$$

Pane II of \Cref{fig:rkfilt} further shows how the shift is swapped to the last position
on the subdiagonal of $(\underline{H}_k, \underline{G}_k)$. The end result is displayed in pane III.

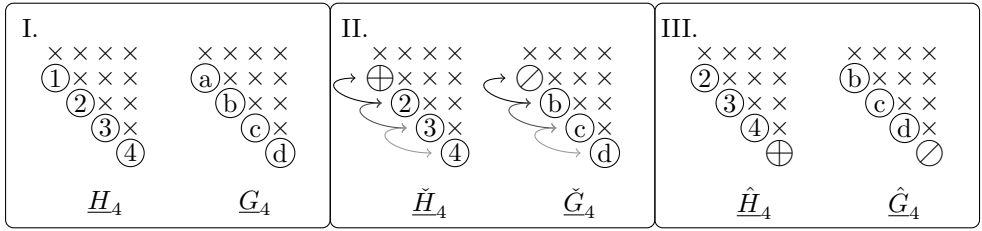
\begin{figure}[htp]
\begin{tikzpicture}[scale=1.66,y=-1cm]

\tikzset{bloplus/.style={draw,circle,append after command={
        [shorten >=\pgflinewidth, shorten <=\pgflinewidth,]
        (\tikzlastnode.north) edge[white,line width=1.0pt] (\tikzlastnode.south)
        (\tikzlastnode.east) edge[white,line width=1.0pt] (\tikzlastnode.west)
        }
    }
}

\tikzset{bloslash/.style={draw,circle,append after command={
        [shorten >=\pgflinewidth, shorten <=\pgflinewidth,]
        (\tikzlastnode.north east) edge[white,line width=1.0pt] (\tikzlastnode.south west)
        }
    }
}

\def\yos{0}
% Pane 1
% H
\def\xosA{0}
\node[] at (0.6+\xosA,1.4+\yos) {$\underline{H}_4$};

\foreach \y in {1,...,4}{
	\foreach \x in {\y,...,4}{
		\node[] at (\x/5+\xosA,\y/5+\yos) {$\times$};
	}
};

\node[] at (0.2+\xosA,0.4+\yos) {\circled{1}};
\node[] at (0.4+\xosA,0.6+\yos) {\circled{2}};
\node[] at (0.6+\xosA,0.8+\yos) {\circled{3}};
\node[] at (0.8+\xosA,1+\yos) {\circled{4}};

% G
\def\xosB{1.2}
\node[] at (0.6+\xosB,1.4+\yos) {$\underline{G}_4$};

\foreach \y in {1,...,4}{
	\foreach \x in {\y,...,4}{
		\node[] at (\x/5+\xosB,\y/5+\yos) {$\times$};
	}
};

\node[] at (0.2+\xosB,0.4+\yos) {\circledletter{a}};
\node[] at (0.4+\xosB,0.6+\yos) {\circledletter{b}};
\node[] at (0.6+\xosB,0.8+\yos) {\circledletter{c}};
\node[] at (0.8+\xosB,1+\yos) {\circledletter{d}};

% Frame
\draw[rounded corners=3pt] (-0.2+\xosA,-0.2+\yos) rectangle (\xosB+1.2,1.6+\yos);
\node[] at (\xosA,0+\yos) {I.};

% Pane 2
% H
\def\xosA{2.6}
\node[] at (0.6+\xosA,1.4+\yos) {$\underline{\check{H}}_4$};

\foreach \y in {1,...,4}{
	\foreach \x in {\y,...,4}{
		\node[] at (\x/5+\xosA,\y/5+\yos) {$\times$};
	}
};

\node[] (s1) at (0.2+\xosA,0.4+\yos) {\scalebox{1.5}{$\oplus$}};
\node[] (s2) at (0.4+\xosA,0.6+\yos) {\circled{2}};
\node[] (s3) at (0.6+\xosA,0.8+\yos) {\circled{3}};
\node[] (s4) at (0.8+\xosA,1+\yos) {\circled{4}};

% swap
\path[<->] (s1) edge [min distance=2mm,in=180,out=180,looseness=3,color=black!100] (s2);
\path[<->] (s2) edge [min distance=2mm,in=180,out=180,looseness=3,color=black!70] (s3);
\path[<->] (s3) edge [min distance=2mm,in=180,out=180,looseness=3,color=black!40] (s4);

% G
\def\xosB{3.8}
\node[] at (0.6+\xosB,1.4+\yos) {$\underline{\check{G}}_4$};

\foreach \y in {1,...,4}{
	\foreach \x in {\y,...,4}{
		\node[] at (\x/5+\xosB,\y/5+\yos) {$\times$};
	}
};

\node[] (g1) at (0.2+\xosB,0.4+\yos) {\scalebox{1.5}{$\oslash$}};
\node[] (g2) at (0.4+\xosB,0.6+\yos) {\circledletter{b}};
\node[] (g3) at (0.6+\xosB,0.8+\yos) {\circledletter{c}};
\node[] (g4) at (0.8+\xosB,1+\yos) {\circledletter{d}};

% swap
\path[<->] (g1) edge [min distance=2mm,in=180,out=180,looseness=3,color=black!100] (g2);
\path[<->] (g2) edge [min distance=2mm,in=180,out=180,looseness=3,color=black!70] (g3);
\path[<->] (g3) edge [min distance=2mm,in=180,out=180,looseness=3,color=black!40] (g4);

% Frame
\draw[rounded corners=3pt] (-0.2+\xosA,-0.2+\yos) rectangle (\xosB+1.2,1.6+\yos);
\node[] at (\xosA,0+\yos) {II.};

% Pane 3
% A
\def\xosA{5.2}
\node[] at (0.6+\xosA,1.4+\yos) {$\underline{\hat{H}}_4$};

\foreach \y in {1,...,4}{
	\foreach \x in {\y,...,4}{
		\node[] at (\x/5+\xosA,\y/5+\yos) {$\times$};
	}
};

\node[] at (0.2+\xosA,0.4+\yos) {\circled{2}};
\node[] at (0.4+\xosA,0.6+\yos) {\circled{3}};
\node[] at (0.6+\xosA,0.8+\yos) {\circled{4}};
\node[] at (0.8+\xosA,1+\yos) {\scalebox{1.5}{$\oplus$}};

% B
\def\xosB{6.4}
\node[] at (0.6+\xosB,1.4+\yos) {$\underline{\hat{G}}_4$};

\foreach \y in {1,...,4}{
	\foreach \x in {\y,...,4}{
		\node[] at (\x/5+\xosB,\y/5+\yos) {$\times$};
	}
};

\node[] at (0.2+\xosB,0.4+\yos) {\circledletter{b}};
\node[] at (0.4+\xosB,0.6+\yos) {\circledletter{c}};
\node[] at (0.6+\xosB,0.8+\yos) {\circledletter{d}};
\node[] at (0.8+\xosB,1+\yos) {\scalebox{1.5}{$\oslash$}};

% Frame
\draw[rounded corners=3pt] (-0.2+\xosA,-0.2+\yos) rectangle (\xosB+1.2,1.6+\yos);
\node[] at (\xosA,0+\yos) {III.};

\end{tikzpicture}
\caption{RQZ-like procedure to change the first pole in an RK Hessenberg pair to a new shift (Pane II)
and move it to the last position in the RK Hessenberg pair (Pane II-III).}
\label{fig:rkfilt}
\end{figure}

\noindent The process summarized in \Cref{fig:rkfilt} effectively updates,
$$(\underline{\hat{H}}_k, \underline{\hat{G}}_k) 
= Q^* (\underline{H}_k, \underline{G}_k) Z,$$
in such a way that the pole tuple is changed to
$\hat{\Xi} = (\xi_2,\hdots,\xi_k,\varrho)$.
To maintain the rational Krylov recurrence \cref{eq:RK_recurrence},
the orthonormal basis is updated as $\hat{V}_{k+1} = V_{k+1} Q$.

This does not change the span of $V_{k+1}$, i.e. $\mathcal{R}(\hat{V}_{k+1}) =
\mathcal{R}(V_{k+1})$, but the vectors are \emph{rearranged}.
The new start vector is given by:
\begin{equation}
\label{eq:rkeq2}
\hat{\bm{v}} = \hat{V}_{k+1} \, \bm{e}_1 = V_{k+1}  \, \bm{q}_1 = \gamma V_{k+1} (\underline{H}_k{-}\varrho \underline{G}_k)\bm{e}_1.
\end{equation}
The rational Krylov recurrence \eqref{eq:RK_recurrence} implies,
\begin{equation}
\label{eq:rkeq}
(A{-}\varrho B) \, V_{k+1} \, (\underline{H}_k{-}\xi_1  \underline{G}_k) = (A{-}\xi_1 B) \, V_{k+1} \, (\underline{H}_k{-}\varrho  \underline{G}_k).
\end{equation}
Rearranging terms in Equation~\eqref{eq:rkeq} and combining this with Equations~(\ref{eq:LSsol},~\ref{eq:rkeq2}) we see that
the new starting vector is given by:
\begin{equation}
\label{eq:startvec}
\hat{\bm{v}} = \gamma (A{-}\xi_1 \,B)^{-1}(A{-}\varrho \, B) \, \bm{v}.
\end{equation}

From the uniqueness of a rational Krylov recurrence \eqref{eq:RK_recurrence} 
\cite[Theorem 3.2]{Berljafa2015}, it follows that
$ \mathcal{R}(\hat{V}_{k+1}) = \mathcal{K}^{\text{rat}}_{k+1}(A,B, \hat{\bm{v}}, \hat{\Xi})$.

The filter operation is finalized by removing the last pole $\varrho$ from the subspace by
reducing the order of the rational Krylov recurrence by
one. This means that the trailing column and row of $(\underline{\hat{H}}_k, \underline{\hat{G}}_k)$ 
are removed, as well as the last vector of $\hat{V}_{k+1}$.

With the results of \Cref{sec:implQthm} and \Cref{sec:subspaceiter} in mind it is clear
how this RQZ-like procedure applies a filter in the RK iteration.
%This approach is a generalization of Sorensen's idea for implicitly restarting the Arnoldi method
%in the classical Krylov setting.

As a numerical experiment, we revisit the two fluid flow problems of \Cref{subsec:example2}.
Instead of computing all eigenvalues 
we are now only interested in determining if the problems are stable.
To this end, it is sufficient to determine the rightmost eigenvalues of both problems
and check if they are situated in the left half-plane.
As input to the restarted rational Krylov method we have
the matrix pair $(A,B)$, a start vector $\bm{v}$, a tuple
of poles $\Xi$, a maximal subspace dimension $m$, a restart length $p$, a number of desired
Ritz values $\ell$ and a tolerance \texttt{tol} up to which the $\ell$ Ritz values need to
be converged.
The residual is determined as
$\| (\underline{H}_k - \theta \underline{G}_k) \, \bm{y} \|_2$,
with $(\theta,\bm{y})$ the Ritz value and Ritz vector computed from $(H_k,G_k)$, the upper $k{\times}k$ block of the recurrence pencil.
The iteration starts with computing an initial rational Krylov subspace of dimension $m$. If
the $\ell$ rightmost Ritz values have converged up to a maximal residual \texttt{tol}, the iteration is halted.
Otherwise the $p$ leftmost Ritz values are selected as shifts, the subspace is reduced to dimension
$m{-}p$ by using the RQZ method to filter the subspace, and the subspace is again expanded to full
dimension $m$. This procedure is repeated until the $\ell$ rightmost Ritz values have
converged.

The settings and results are summarized in \Cref{tab:numexp4}. In both cases, we selected
poles along the imaginary axis, $\Xi = ( -20i, -18i, \hdots, 18i, 20i )$, as we expect
the rightmost eigenvalue to be situated close to it.

\begin{table}[htp]
\centering
\caption{Summary of the settings and results of the restarted rational Krylov iteration. The columns list the maximal subspace dimension $m$, the restart length $p$, the number of wanted Ritz values $\ell$, the tolerance \texttt{tol}, and the required number of restarts to reach convergence.} 
\scalebox{0.92}{
\begin{tabular}{l | l | l | l | l  | l}
Problem & $m$ & $p$ & $\ell$ & \texttt{tol} & $\#$ restarts\\
\hline
\emph{Cavity flow} & $40$ & $20$ & $8$ & $10^{-7}$ & $8$ \\
\emph{Obstacle flow} & $60$ & $25$ & $7$  & $10^{-7}$ & $11$
\end{tabular}

}
\label{tab:numexp4}
\end{table}

\Cref{fig:numexp4} shows the rightmost part of the spectrum and the converged Ritz values. As can be seen, the method successfully converged to the correct eigenvalues within a reasonable number of restarts.

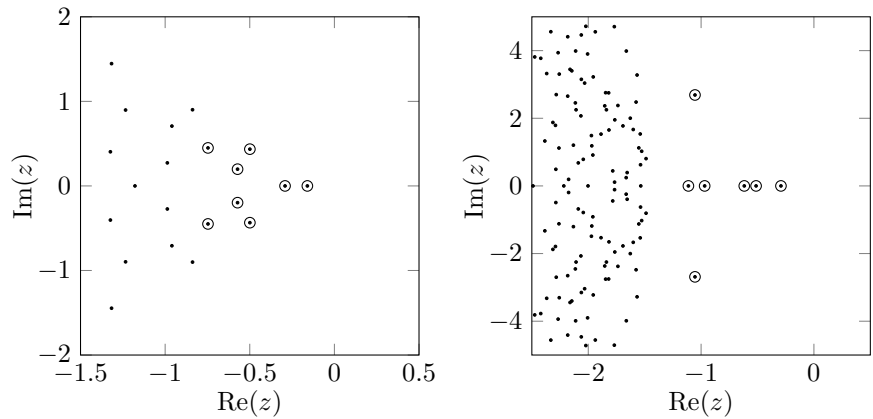
\begin{figure}[htp]
\centering
\begin{tikzpicture}

%%%%%%%%%%%%%%%%%%%%%%%%%%%%%%%%%%%%%%%%%
%%% Cavity flow
%%%%%%%%%%%%%%%%%%%%%%%%%%%%%%%%%%%%%%%%

%\node[] at (-1cm,4.5cm) {I.};

\begin{axis}[%
xlabel=$\operatorname{Re}(z)$,
ylabel=$\operatorname{Im}(z)$,
at={(0cm,0cm)},
width=4.5cm,
height=4.5cm,
ylabel shift = -0.25 cm,
xlabel shift = -0.15 cm,
scale only axis,
xmin=-1.5,
xmax=0.5,
ymin=-2,
ymax=2,
axis background/.style={fill=white}
]

\addplot [only marks, mark=*, mark size=0.5, mark options={solid, black}, forget plot]
  table[row sep=crcr]{%
-0.159712918063673	0\\
-0.291580095046349	0\\
-0.499375162059999	0.43481066206788\\
-0.499375162059999	-0.43481066206788\\
-0.572151001835489	0.198144422113899\\
-0.572151001835489	-0.198144422113899\\
-0.746854788844477	0.449523843073493\\
-0.746854788844477	-0.449523843073493\\
-0.838180282655867	0.900877569263931\\
-0.838180282655867	-0.900877569263931\\
-0.960104171547016	0.707683923108078\\
-0.960104171547016	-0.707683923108078\\
-0.98706477433271	0.272696621173848\\
-0.98706477433271	-0.272696621173848\\
-1.31663316397681	1.44619045376916\\
-1.31663316397681	-1.44619045376916\\
-1.23379810802095	0.897665555406166\\
-1.23379810802095	-0.897665555406166\\
-1.17832068796896	0\\
-1.53224069371478	1.33916634643493\\
-1.53224069371478	-1.33916634643493\\
-1.32350258375136	0.403699037416036\\
-1.32350258375136	-0.403699037416036\\
-1.57010011781152	0.333511637170503\\
-1.57010011781152	-0.333511637170503\\
};\label{line:ne4eig}

\addplot [only marks, mark=o, mark options={solid, black}, forget plot]
  table[row sep=crcr]{%
-0.159712918064107	7.56778486713246e-15\\
-0.291580095046721	1.60614563928425e-14\\
-0.499375162060242	-0.434810662067879\\
-0.499375162060561	0.434810662067587\\
-0.572151001831247	-0.198144422111986\\
-0.572151001833212	0.198144422115411\\
-0.746854785647083	-0.449523841356595\\
-0.746854788379787	0.449523848396488\\
};\label{line:ne4ritz}
\end{axis}

%%%%%%%%%%%%%%%%%%%%%%%%%%%%%%%%%%%%%%
%% Obstacle flow
%%%%%%%%%%%%%%%%%%%%%%%%%%%%%%%%%%%%%%

%\node[] at (5cm,4.5cm) {II.};

\begin{axis}[%
xlabel=$\operatorname{Re}(z)$,
ylabel=$\operatorname{Im}(z)$,
at={(6cm,0cm)},
width=4.5cm,
height=4.5cm,
ylabel shift = -0.25 cm,
xlabel shift = -0.15 cm,
scale only axis,
xmin=-2.5,
xmax=0.5,
ymin=-5,
ymax=5,
axis background/.style={fill=white}
]
\addplot [color=black, draw=none, mark=*, mark size=0.5, mark options={solid, black}, forget plot]
  table[row sep=crcr]{%
-2.59271500162099	9.63460806355728\\
-2.59271500162099	-9.63460806355728\\
-2.58826144323502	9.34261241134703\\
-2.58826144323502	-9.34261241134703\\
-2.57482303431782	8.65293376178496\\
-2.57482303431782	-8.65293376178496\\
-2.59169072674715	8.63636498954289\\
-2.59169072674715	-8.63636498954289\\
-2.51543536575588	8.26974829337135\\
-2.51543536575588	-8.26974829337135\\
-2.58930420782078	8.18256021223072\\
-2.58930420782078	-8.18256021223072\\
-2.29110691400536	7.66440283764981\\
-2.29110691400536	-7.66440283764981\\
-2.38190428480501	7.63089244031131\\
-2.38190428480501	-7.63089244031131\\
-2.59395884043457	7.46768300322401\\
-2.59395884043457	-7.46768300322401\\
-2.23645913427619	7.15801026027781\\
-2.23645913427619	-7.15801026027781\\
-2.2587726994072	7.15217121093921\\
-2.2587726994072	-7.15217121093921\\
-2.19711953128175	6.96119582300824\\
-2.19711953128175	-6.96119582300824\\
-2.34818133946733	7.01082094293896\\
-2.34818133946733	-7.01082094293896\\
-2.46652258980588	6.82663457311761\\
-2.46652258980588	-6.82663457311761\\
-2.05394927227972	6.20857767403149\\
-2.05394927227972	-6.20857767403149\\
-2.23017120758311	6.32488414637513\\
-2.23017120758311	-6.32488414637513\\
-2.33829485355742	6.27965609789076\\
-2.33829485355742	-6.27965609789076\\
-2.54009217612072	6.10289066986332\\
-2.54009217612072	-6.10289066986332\\
-1.90331921247695	5.43816283005138\\
-1.90331921247695	-5.43816283005138\\
-2.23667615529012	5.7166350952523\\
-2.23667615529012	-5.7166350952523\\
-2.25118065433762	5.72476760794826\\
-2.25118065433762	-5.72476760794826\\
-2.14405889155524	5.20791839904344\\
-2.14405889155524	-5.20791839904344\\
-2.34635604416722	5.30608011518909\\
-2.34635604416722	-5.30608011518909\\
-2.11841728334256	5.13358602238996\\
-2.11841728334256	-5.13358602238996\\
-1.76904878296007	4.70840735151439\\
-1.76904878296007	-4.70840735151439\\
-2.02144967509811	4.71949003617102\\
-2.02144967509811	-4.71949003617102\\
-1.93760723039927	4.55612436442593\\
-1.93760723039927	-4.55612436442593\\
-0.29220994533705	0\\
-1.66277901844049	3.98741816408562\\
-1.66277901844049	-3.98741816408562\\
-2.06179127354846	4.46313402042804\\
-2.06179127354846	-4.46313402042804\\
-2.33174670188559	4.5586660090781\\
-2.33174670188559	-4.5586660090781\\
-2.18185277679699	4.41035991219882\\
-2.18185277679699	-4.41035991219882\\
-1.05417149218136	2.68867487397892\\
-1.05417149218136	-2.68867487397892\\
-0.512581198633849	0\\
-0.617276558451016	0\\
-2.00581288249388	3.90092990355196\\
-2.00581288249388	-3.90092990355196\\
-2.11180793679427	3.98894314008162\\
-2.11180793679427	-3.98894314008162\\
-1.56736085168747	3.27891238293217\\
-1.56736085168747	-3.27891238293217\\
-2.26718700030579	3.93849068292847\\
-2.26718700030579	-3.93849068292847\\
-2.5959946345953	4.13109615434547\\
-2.5959946345953	-4.13109615434547\\
-2.42094979132644	3.77370030035644\\
-2.42094979132644	-3.77370030035644\\
-2.47450979249977	3.81327734546832\\
-2.47450979249977	-3.81327734546832\\
-0.968375059086198	0\\
-1.9558451351958	3.22314767066926\\
-1.9558451351958	-3.22314767066926\\
-2.16138717680506	3.44610228430512\\
-2.16138717680506	-3.44610228430512\\
-2.14567570719206	3.4026563591976\\
-2.14567570719206	-3.4026563591976\\
-1.57620839378037	2.47918225656029\\
-1.57620839378037	-2.47918225656029\\
-2.06124133965359	3.15153022706239\\
-2.06124133965359	-3.15153022706239\\
-1.11248792298412	0\\
-2.03209513586719	3.03990605726079\\
-2.03209513586719	-3.03990605726079\\
-1.8191668250326	2.75194175358989\\
-1.8191668250326	-2.75194175358989\\
-1.84600191389424	2.75582523113899\\
-1.84600191389424	-2.75582523113899\\
-2.25708344333188	3.30583015736473\\
-2.25708344333188	-3.30583015736473\\
-2.36827437591197	3.32235715086626\\
-2.36827437591197	-3.32235715086626\\
-1.73716366981212	2.377640317302\\
-1.73716366981212	-2.377640317302\\
-1.62762487408061	2.00218566455309\\
-1.62762487408061	-2.00218566455309\\
-1.8538919241351	2.367355586805\\
-1.8538919241351	-2.367355586805\\
-1.53935196112627	1.5356177716047\\
-1.53935196112627	-1.5356177716047\\
-1.83912405754011	2.2512438933837\\
-1.83912405754011	-2.2512438933837\\
-1.60143722568513	1.6677973563918\\
-1.60143722568513	-1.6677973563918\\
-1.76470303367534	1.95343621179175\\
-1.76470303367534	-1.95343621179175\\
-1.69254004502293	1.77431270225741\\
-1.69254004502293	-1.77431270225741\\
-1.52641853026536	1.02520382653642\\
-1.52641853026536	-1.02520382653642\\
-1.55197939598621	1.12485663951532\\
-1.55197939598621	-1.12485663951532\\
-1.48779656956898	0.806836006910084\\
-1.48779656956898	-0.806836006910084\\
-2.18209142417956	2.65386470625523\\
-2.18209142417956	-2.65386470625523\\
-2.11438575812238	2.45461236665996\\
-2.11438575812238	-2.45461236665996\\
-2.28374901014749	2.69934795034849\\
-2.28374901014749	-2.69934795034849\\
-1.53664919515953	0.625410225454939\\
-1.53664919515953	-0.625410225454939\\
-1.81649491547019	1.65393612286974\\
-1.81649491547019	-1.65393612286974\\
-1.5345511879634	0\\
-2.10810860728106	2.25072926751078\\
-2.10810860728106	-2.25072926751078\\
-2.06515326521161	2.07045704963437\\
-2.06515326521161	-2.07045704963437\\
-1.88686829524557	1.5314231778861\\
-1.88686829524557	-1.5314231778861\\
-1.65534581348564	0.394088452050773\\
-1.65534581348564	-0.394088452050773\\
-1.66401150745776	0.244201065232653\\
-1.66401150745776	-0.244201065232653\\
-1.97204254685297	1.48814061887978\\
-1.97204254685297	-1.48814061887978\\
-1.78302691831722	0.444181040763783\\
-1.78302691831722	-0.444181040763783\\
-1.76677435998857	0.111542240571662\\
-1.76677435998857	-0.111542240571662\\
-1.9686645280974	1.18508498117067\\
-1.9686645280974	-1.18508498117067\\
-1.95925326405819	0.913022952184482\\
-1.95925326405819	-0.913022952184482\\
-2.31349278242437	1.87615432497466\\
-2.31349278242437	-1.87615432497466\\
-2.29062324868326	1.79064311378766\\
-2.29062324868326	-1.79064311378766\\
-2.13166070975787	1.20476067790126\\
-2.13166070975787	-1.20476067790126\\
-2.04417461501809	0.785996019664727\\
-2.04417461501809	-0.785996019664727\\
-2.00221735942775	0\\
-2.08842121490662	0.680930521699399\\
-2.08842121490662	-0.680930521699399\\
-2.25982507962394	1.12003877256647\\
-2.25982507962394	-1.12003877256647\\
-2.38475426992435	1.32799046034014\\
-2.38475426992435	-1.32799046034014\\
-2.17412534154196	0.194131654184283\\
-2.17412534154196	-0.194131654184283\\
-2.21664747631151	0\\
-2.28723868322552	0.491772163679451\\
-2.28723868322552	-0.491772163679451\\
-2.51133007329602	0.525068145369337\\
-2.51133007329602	-0.525068145369337\\
-2.48970352223899	0\\
-2.59828590894367	0\\
};
\addplot [color=black, draw=none, mark=o, mark options={solid, black}, forget plot]
  table[row sep=crcr]{%
-0.29220994533686	-7.30342228170809e-14\\
-0.512581198634323	2.46341983032941e-14\\
-0.617276558451237	-4.20319920230348e-12\\
-0.968375054322356	1.3448788520285e-08\\
-1.05417149158515	2.68867487422862\\
-1.0541714921566	-2.68867487412548\\
-1.1124877622406	-1.07571751384994e-06\\
};
\end{axis}

\end{tikzpicture}
\caption{Rightmost part of the spectrum of the cavity flow (left) and obstacle flow (right) problems. The eigenvalues (\ref{line:ne4eig}) and Ritz values (\ref{line:ne4ritz}) are shown.}
\label{fig:numexp4}
\end{figure}

%%%%%%%%%%%%%%%%%%%%%%%%%%%%%%%%%%%%%%%%%%%%%%%%%%%%%%%%%%%%%%%%%%%%%
%% SECTION: CONCLUSION & ACKNOWLEDGEMENTS
%%%%%%%%%%%%%%%%%%%%%%%%%%%%%%%%%%%%%%%%%%%%%%%%%%%%%%%%%%%%%%%%%%%%%
\section{Conclusion.}
\label{sec:conclusion}

In this paper we proposed a rational QZ algorithm (RQZ) for the numerical solution 
of the dense (unsymmetric) generalized eigenvalue problem.
The new algorithm operates on matrix pairs in Hessenberg, Hessenberg form rather than the Hessenberg,
triangular form used in the classical QZ method. Hessenberg pairs link to
rational Krylov and the associated poles are encoded in the subdiagonal elements of both
Hessenberg matrices.
%This additional freedom 
%The paper argued that matrix pairs in proper Hessenberg form contain poles.
%We have proven essential uniqueness of the proper Hessenberg form, which allowed for 
A direct reduction method of a  regular matrix pair to Hessenberg, Hessenberg form 
was proposed. Moreover, we have demonstrated that during the reduction a good 
choice of poles can lead to premature deflations. The iterative rational QZ algorithm
differs from the classical algorithm in the sense that also poles can be introduced in
each QZ step. Numerical experiments confirm that a good choice of poles allows the RQZ method
to outperform the QZ algorithm by reducing the number of iterations per eigenvalue.
The implicit chasing technique is justified by an implicit Q theorem, which is proved in a
novel manner operating directly on the matrix pair and exploiting the connections with
rational Krylov.
%and the formulation of
%an implicit RQZ step.
%already split up a problem, 
%thereby significantly reducing the computational
%cost of the iterative phase.
Our theoretical analysis revealed that an RQZ iteration implicitly performs nested subspace
iteration driven by a pair of rational functions.
%The paper formulated a variant of the RQZ method which uses tightly packed shifts.
%This allowed for a more efficient implementation in terms of cache use, while maintaining
%the good convergence of the single shift RQZ method.
Finally, we have applied the RQZ method as a filter in rational Krylov. 
%The numerical experiment indicated that it can outperform existing methods.

\section*{Acknowledgments.}
The authors thank Jared Aurentz and Thomas Mach for the fruitful discussions and
suggestions related to this project, and the referees for their valuable feedback.

%which
%have initiated this project.

%%%%%%%%%%%%%%%%%%%%%%%%%%%%%%%%%%%%%%%%%%%%%%%%%%%%%%%%%%%%%%%%%%%%%
%% BIBTEX
%%%%%%%%%%%%%%%%%%%%%%%%%%%%%%%%%%%%%%%%%%%%%%%%%%%%%%%%%%%%%%%%%%%%%
\bibliographystyle{siamplain}
\bibliography{bibliography_rqz}

\end{document}